\documentclass[12pt]{amsart}
\usepackage[mathscr]{eucal}
\usepackage{amssymb,graphics}
\usepackage{overpic}
\usepackage{caption}
\usepackage{subcaption}

\usepackage{marginnote}

\newtheorem{thm}{Theorem}[section]
\newtheorem*{thm*}{Theorem}
\newtheorem{lemma}[thm]{Lemma}
\newtheorem{prop}[thm]{Proposition}
\newtheorem{cor}[thm]{Corollary}

\theoremstyle{definition}
\newtheorem{defn}[thm]{Definition}

\setcounter{tocdepth}{1} 

\begin{document}
 
\renewcommand{\H}{\mathcal{H}} 
\newcommand{\cH}{\overline{\H}} 
\newcommand{\h}{\mathfrak{h}}     
\newcommand{\Ht}{\mathsf{H}^2} 
\newcommand{\Htf}{\mathsf{H}^2_{\mathsf{fut}}} 
\newcommand{\R}{\mathbb R}
\newcommand{\E}{\mathsf E}
\newcommand{\Z}{\mathbb Z}
\newcommand{\V}{\mathsf V}
\renewcommand{\P}{\mathsf P}
\newcommand{\CP}{\mathcal{C}} 
\newcommand{\GL}{\mathsf{GL}}
\newcommand{\GLp}{\GL^+}
\newcommand{\PSLtR}{\mathsf{PSL}(2,\R)}

\newcommand{\Aff}{\mathsf{Aff}^+}
\newcommand{\Conf}{\mathsf{Conf}^+}
\newcommand{\Isom}{\mathsf{Isom}^+}
\newcommand{\Confo}{\mathsf{Conf}^0}

\newcommand{\Ise}{\Isom(\E)}

\newcommand{\W}{\mathcal{W}}  
\newcommand{\Wp}{\W_+}
\newcommand{\Wm}{\W_-}

\renewcommand{\L}{\mathsf{L}}  

\renewcommand{\AA}{\mathbb{A}}  
\newcommand{\Fut}{\mathsf{Future}}
\newcommand{\Det}{\mathsf{Det}}
\newcommand{\A}{\mathsf{A}}

\newcommand{\vu}{\mathbf{u}}
\newcommand{\vv}{\mathbf{v}}
\newcommand{\vw}{\mathbf{w}}
\newcommand{\vs}{{\mathbf{s}}}
\newcommand{\vt}{{\mathbf{t}}}
\newcommand{\vn}{{\mathbf{n}}}
\newcommand{\va}{{\mathbf{a}}}
\newcommand{\vb}{{\mathbf{b}}}
\newcommand{\vc}{{\mathbf{c}}}
\newcommand{\vx}{{\mathbf{x}}}
\newcommand{\vy}{{\mathbf{y}}}
\newcommand{\ve}{{\mathbf{e}}}

\newcommand{\Stem}{\mathsf{Stem}}
\newcommand{\xo}[1]{#1^0}
\newcommand{\xp}[1]{#1^+}
\newcommand{\xm}[1]{#1^-}
\newcommand{\xpm}[1]{#1^{\pm}}

\newcommand{\Oto}{\mathsf{O}(2,1)}
\newcommand{\SOto}{\mathsf{SO}(2,1)}
\newcommand{\SOoto}{\mathsf{SO}^{0}(2,1)}
\newcommand{\SOoo}{\mathsf{SO}^{0}(1,1)}

\newcommand{\Sim}{\mathsf{Sim}(\H)}
\newcommand{\IsE}{\mathsf{Isom}^+(\E)}
\newcommand{\ConfE}{\mathsf{Conf}^+(\E)}
\newcommand{\Iso}{\mathsf{Isom}(\H)}
\newcommand{\Isoo}{\mathsf{Isom}^0(\H)}
\newcommand{\Ss}{\mathcal{S}} 
\newcommand{\clSs}{\bar{\Ss}}
\newcommand{\rpo}{\mathbb{RP}^1}
\newcommand{\Rplus}{\R^+}

\newcommand{\Vertex}{\mathsf{Vertex}} 
\newcommand{\Quad}{\mathsf{Quad}} 

\newcommand{\Qu}{\mathsf{Q}} 

\newcommand{\Id}{\mathbb{Id}}
\newcommand{\Fix}{\mathsf{Fix}}
\newcommand{\Bb}{\mathscr{B}}

\newcommand{\ldot}[2]{#1 \cdot #2 }
\newcommand{\lcross}[2]{#1 \times #2 }
\newcommand{\spn}{\mathsf{span}}
\newcommand{\vsc}{\vs^{*}_{2,1}}

\renewcommand\mod[1]{\big(\mathrm{mod}~#1\big)}
\newcommand{\origin}{ \mathsf{0}}
\newcommand{\MH}[1]{h^-_{#1}} 

\newcommand{\Fl}{\mathscr{F}(\ell)}

\title
{Crooked halfspaces}

\author[Burelle]{Jean-Philippe Burelle}
\address{Department of Mathematics\\ University of Maryland\\
    College Park, MD 20742 USA}
    \email{jburelle@umd.edu}
\author[Charette]{Virginie Charette}
    \address{D\'epartement de math\'ematiques\\ Universit\'e de Sherbrooke\\
    Sherbrooke, Quebec, Canada}
    \email{v.charette@usherbrooke.ca}
\author[Drumm]{Todd A. Drumm}
    \address{Department of Mathematics\\ Howard University\\
    Washington, DC }
    \email{tdrumm@howard.edu}
\author[Goldman]{William M. Goldman}
    \address{Department of Mathematics\\ University of Maryland\\
    College Park, MD 20742 USA}
    \email{wmg@math.umd.edu}

\subjclass[2000]
{53B30 (Lorentz metrics, indefinite metrics), 
53C50 (Lorentz manifolds, manifolds with indefinite metrics)}
\date{\today}
\keywords{Minkowski space, timelike, spacelike, lightlike, null, particle, photon, crooked plane, crooked halfspace, tachyon, halfplane in the hyperbolic plane}
\begin{abstract}
We develop the Lorentzian geometry of a crooked halfspace
in $2+1$-dimensional Minkowski space.
We calculate the affine, conformal and isometric automorphism groups of a crooked halfspace, and discuss its stratification into orbit types, giving an explicit
slice for the action of the automorphism group.
The set of  parallelism classes of timelike lines, or \emph{particles}, in a crooked halfspace is a geodesic halfplane in the hyperbolic plane.
Every point in an open crooked halfspace lies on a particle. 
The correspondence between crooked halfspaces and halfplanes in hyperbolic
$2$-space preserves the partial order defined by inclusion, and the involution
defined by complementarity. 
We find conditions for when a particle lies completely in a crooked half space. 
We revisit the disjointness criterion for crooked planes developed by Drumm and
Goldman in terms of the semigroup of translations preserving a crooked halfspace.
These ideas are then applied to describe foliations of Minkowski space by
crooked planes. 
\end{abstract}
\thanks{
{\em Acknowledgement:\/}
Support by the Institut Mittag-Leffler (Djursholm, Sweden) is gratefully
acknowledged, where much of this paper was written and the last author spoke
on this in the Institut Mittag-Leffler Seminar as part of the Spring 2012 
semester program on ``Geometric and Analytic Aspects of Group Theory."
He gratefully acknowledges partial support by National Science Foundation grant DMS-070781, as well as the partial support by the Institute for Advanced Study in 2008, Institut Henri Poincar\'e, and Centre de Recerca Matematica in 2012.
The first two authors gratefully acknowledge partial support by NSERC and FQRNT
as well as NSF grant DMS-070781. 
The third author gratefully acknowledges partial support by the GEAR network 
funded by the National Science Foundation to visit Institut Henri Poincar\'e  in 2012.
Part of this work began with an undergraduate internship of the first author
with the second author at the Universit\'e de Sherbrooke in 2010, and later
with the last author at the Experimental Geometry Lab at the University of Maryland in 2011.
}

\dedicatory{\begin{center}
{\it Dedicated to the memory of Robert Miner}\end{center}}

\maketitle
\tableofcontents

\section{Introduction}

Crooked planes are special surfaces in $2+1$-dimensional
Minkowski space $\E$. 
They were introduced by the third author~\cite{Drumm}
to construct fundamental polyhedra for
nonsolvable discrete groups $\Gamma$ of isometries 
which act properly on all of  $\E$. 
The existence of such groups $\Gamma$ was discovered
by Margulis~\cite{Margulis1,Margulis2} around 1980 
and was quite unexpected
(see Milnor~\cite{Milnor} for a lucid description of this
problem and~\cite{FG}
for related results). 

In this paper we explore the geometry of crooked planes
and the polyhedra which they bound. 

The basic object is a {\em (crooked) halfspace.\/}
A {\em halfspace\/}  is one of the two components
of the complement of a crooked plane $\CP\subset\E$.
It is the interior of a $3$-dimen\-sion\-al
submanifold-with-boundary, 
and the boundary $\partial\H$ equals $\CP$.

Every crooked halfspace $\H$ determines a {\em halfplane\/}
$\h\subset\Ht$, consisting of directions of 
timelike lines completely contained in $\H$.
Two half\-spaces determine the same 
halfplane if and only if they are {\em parallel,\/}
that is, they differ by a translation. 
The translation is just the unique translation between
the respective vertices of the halfspaces.
We call $\h$ the {\em linearization\/} of $\H$ and denote it $\h = \L(\H)$.
The terminology is motivated by the fact that the linear holonomy
of a complete flat Lorentz manifold defines a complete hyperbolic surface.
See \S\ref{sec:Particles} for a detailed explanation.
In our previous work  \cite{CharetteDrummGoldman2,
CharetteDrummGoldman3,CharetteDrummGoldman4},
we have used crooked planes to extend constructions in $2$-dimensional hyperbolic geometry to Lorentzian $3$-dimensional geometry.

The set $\mathfrak{S}(\Ht)$  of halfplanes in $\Ht$ enjoys a partial ordering given by inclusion
and an involution given by the operation of taking the complement.
Similarly the set $\mathfrak{S}(\E)$  of crooked halfspaces in $\E$ 
is a partially ordered set with involution.
\begin{thm*} 
Linearization $\mathfrak{S}(\E)\xrightarrow{\L} \mathfrak{S}(\Ht)$  
preserves the partial relation defined by inclusion and the involution
defined by complement.
\end{thm*}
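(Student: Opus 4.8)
The plan is to dispose of the order statement at once and put all the weight on complementarity. By definition $\L(\H)$ is the set of parallelism classes $[\ell]$ of timelike lines $\ell$ with $\ell\subset\H$, so if $\H_{1}\subset\H_{2}$ then every such $\ell$ for $\H_{1}$ is again a timelike line contained in $\H_{2}$ and hence $\L(\H_{1})\subset\L(\H_{2})$; no geometry enters here. The involution on $\mathfrak{S}(\E)$ sends an open crooked halfspace $\H$ to the other component $\H^{*}$ of $\E\setminus\CP$, $\CP=\partial\H$, and the involution on $\mathfrak{S}(\Ht)$ sends an open halfplane $\h$ to the complementary open halfplane $\h^{*}=\Ht\setminus\overline{\h}$ bounded by the same geodesic; so the real content of the theorem is the single identity $\L(\H^{*})=\L(\H)^{*}$.

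For that I would lean on two things supplied by the earlier sections. The first is equivariance: for $g\in\Ise$ with linear part $\dot g$, an isometry of $\Ht$, one has $\L(g\H)=\dot g\,\L(\H)$, since $g$ carries timelike lines in $\H$ to timelike lines in $g\H$ and acts on their directions through $\dot g$; moreover $g$, being a homeomorphism with $g\CP=\partial(g\H)$, satisfies $g(\H^{*})=(g\H)^{*}$, and $\dot g(\h^{*})=(\dot g\,\h)^{*}$ holds on the hyperbolic side. The second is the structural description of $\L(\H)$: it is an \emph{open} halfplane of $\Ht$, its bounding geodesic is $v^{\perp}\cap\Ht$ where $v$ is the spacelike vector defining $\CP$ (so the stem of $\CP$ lies in the timelike plane $v^{\perp}$), and it is the side of that geodesic picked out by the co-orientation of $\CP$ that points into $\H$. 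The point we need is that the geodesic $v^{\perp}\cap\Ht$ is a function of $\CP$ alone, while the side is a function of $\H$.

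Granting these, the argument is short. Since $\partial\H=\partial\H^{*}=\CP$, both $\L(\H)$ and $\L(\H^{*})$ are open halfplanes bounded by the same geodesic $v^{\perp}\cap\Ht$, so each is one of the two complementary halfplanes it bounds; it remains only to see they are not the same one. By equivariance it suffices to check this on a normal form. For the standard crooked halfspace $\H_{0}$ with vertex $\origin$ and director $v$, a direct computation in coordinates gives $\L(\H_{0})=\{u\in\Ht:\langle u,v\rangle<0\}$ (for the sign of $v$ fixed by the co-orientation into $\H_{0}$); the complement $\H_{0}^{*}=\E\setminus\overline{\H_{0}}$ is again a standard crooked halfspace with the same director but the opposite co-orientation, so the same formula yields $\L(\H_{0}^{*})=\{u\in\Ht:\langle u,v\rangle>0\}=\Ht\setminus\overline{\L(\H_{0})}=\L(\H_{0})^{*}$. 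Transporting by $g$ then gives, writing an arbitrary $\H$ as $g\H_{0}$ with $\H^{*}=g\H_{0}^{*}$, the chain $\L(\H^{*})=\dot g\,\L(\H_{0}^{*})=\dot g\,\L(\H_{0})^{*}=(\dot g\,\L(\H_{0}))^{*}=\L(\H)^{*}$.

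The one genuinely substantial ingredient is the structural description of $\L(\H)$ invoked above, and within it the claim that the bounding geodesic $v^{\perp}\cap\Ht$ is \emph{excluded} from $\L(\H)$ — equivalently, that a particle parallel to the stem-plane of $\CP$ must meet $\CP$, being trapped by the two null wings. That is the step I expect to be the main obstacle, and it is precisely the input I would take from the sections on particles; with it in hand the theorem is the bookkeeping above, to the effect that the complementation involution on crooked halfspaces reverses the co-orientation of the boundary crooked plane and that $\L$ translates this reversal verbatim into the complementation involution on halfplanes of $\Ht$.
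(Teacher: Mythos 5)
Your proof is correct, and for the inclusion half it coincides exactly with the paper's argument (Corollary~\ref{cor:order-preserving}: a particle witnessing $\vt\in\L(\H_1)$ lies in $\H_2$ once $\H_1\subset\H_2$). For the complementation half you take a genuinely different route. The paper produces a single explicit involution --- the spine reflection $R$ of \eqref{eq:spine}, $(a,b,c)\mapsto(a,-b,-c)$ --- which interchanges $\H$ and $\H^c$ and whose linearization is the hyperbolic reflection in the common boundary geodesic $\partial\L(\H)$, so complementarity of $\L(\H)$ and $\L(\H^c)$ falls out in one line (Corollary~\ref{cor:complement}). You instead use $\Ise$-equivariance of $\L$ to reduce to the standard halfspace and then identify the complement as the halfspace with the same vertex and opposite director, i.e.\ the content of Proposition~\ref{prop:complement}, combined with $\h(-\vs)$ being the halfplane complementary to $\h(\vs)$; this is equally valid, costs a little overhead in the equivariance bookkeeping, but buys an explicit normal-form computation rather than reliance on a cleverly chosen symmetry. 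Both routes rest on the same substantive input, which you correctly isolate: the theorem of \S\ref{sec:Particles} that the particle directions of $\H(p,\vs)$ form exactly the halfplane $\h(\vs)$, including the fact that directions on the bounding geodesic are not realized by particles in the \emph{open} halfspace. You are, if anything, more careful than the paper on that open/closed point, since the paper defines $\h(\vs)$ by a non-strict inequality and so lets its two ``complementary'' halfplanes share their boundary geodesic.
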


Furthermore, we show that any point in a crooked halfspace $\H$ 
lies on a particle determining a timelike direction in the halfplane 
$\L(\H)\subset \Ht$.

Crooked halfspaces enjoy a high degree of symmetry, which we exploit
for the proofs of these results.
In this paper we consider automorphisms preserving the Lorentzian
structure up to isometry, the Lorentzian structure up to conformal 
equivalence, and the underlying affine connection.
\begin{thm*}
Let $\H\subset\E$ be a crooked halfspace.
Its respective groups of orientation-preserving
affine, conformal and isometric automorphisms are:
\begin{align*}
\Aff(\H) & \cong \R^3 \rtimes  \Z/2 \\
\Conf(\H) & \cong \R^2 \rtimes  \Z/2 \\
\Isom(\H) & \cong \R^1 \rtimes  \Z/2 
\end{align*}
The involutions preserving $\H$ are reflections in tachyons orthogonal
to the spine of $\H$, which preserve orientation on $\E$ but reverse 
time-orientation.
\end{thm*}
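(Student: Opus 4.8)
\emph{Reduction to a linear problem.} Work in a null frame $(\vn_+,\vn_-,\vs)$ adapted to $\H$: $\vs$ spans the spine direction, $\vs^\perp$ is the timelike plane containing the stem (the causal cone in $\vs^\perp$), the stem's null boundary lines are $\R\vn_+$ and $\R\vn_-$, and the two wings lie in the null planes $\vn_+^\perp=\spn(\vs,\vn_+)$ and $\vn_-^\perp=\spn(\vs,\vn_-)$. Since $\H$ is open with affine hull $\E$, any affine (resp.\ conformal, isometric) automorphism of $\H$ is the restriction of a unique affine (resp.\ conformal, isometric) transformation $\phi$ of $\E$ with $\phi(\H)=\H$; such a $\phi$ preserves $\overline{\H}$, hence $\CP=\partial\H$, hence the singular locus $\R\vn_+\cup\R\vn_-$ of $\CP$ --- a union of two lines meeting only at the vertex. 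Thus $\phi$ fixes the vertex; after translating it to the origin, $\phi$ is linear. In particular no nontrivial translation preserves $\H$, and the three automorphism groups are subgroups of $\GLp(3,\R)$, of $\R^+\times\SOto$, and of $\SOto$, where the two components of $\SOto$ distinguish preserving from reversing time-orientation.

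\emph{Diagonalizing the automorphisms.} A linear $\phi$ preserving $\CP$ permutes $\{\R\vn_+,\R\vn_-\}$, so it preserves $\vs^\perp=\spn(\vn_+,\vn_-)$. Consequently it carries the two stem quadrants (the components of the smooth locus of $\CP$ contained in $\vs^\perp$) among themselves and the two wings (the components not contained in $\vs^\perp$) among themselves, hence permutes the null planes $\vn_\pm^\perp$, which are the affine hulls of the wings. Since $\vn_\pm^\perp\supset\R\vn_\pm$, comparing the two permutations forces $\phi(\vs)=\mu\vs$ for some $\mu\ne 0$. So in the frame $(\vn_+,\vn_-,\vs)$ the map $\phi$ is either diagonal, $\vn_\pm\mapsto\lambda_\pm\vn_\pm$ and $\vs\mapsto\mu\vs$, or antidiagonal, $\vn_+\mapsto\lambda_+\vn_-$, $\vn_-\mapsto\lambda_-\vn_+$ and $\vs\mapsto\mu\vs$.

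\emph{The three groups.} Preserving the stem forces $\lambda_+\lambda_->0$; given this, $\det\phi>0$ forces $\mu>0$ in the diagonal case and $\mu<0$ in the antidiagonal case, and conversely every linear map of one of these two forms preserves $\CP$. Hence $\Aff(\CP)\cong\R^3\rtimes(\Z/2)^2$, with identity component the positive diagonal matrices $\mathrm{diag}(\lambda_+,\lambda_-,\mu)$. The subgroup preserving $\H$ itself, rather than interchanging it with the complementary crooked halfspace, has index two: using the explicit description of $\H$ one checks that the diagonal maps with $\lambda_\pm<0$, and their products with the coordinate swap $r\colon\vn_+\leftrightarrow\vn_-$, $\vs\mapsto-\vs$, send $\H$ off itself. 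Thus $\Aff(\H)=\{\mathrm{diag}(\lambda_+,\lambda_-,\mu):\lambda_\pm,\mu>0\}\rtimes\langle r\rangle\cong\R^3\rtimes\Z/2$. In this frame the Gram matrix of the Lorentz form is antidiagonal on the $\vn$-block and $1$ on $\vs$, so $\mathrm{diag}(\lambda_+,\lambda_-,\mu)$ is conformal iff $\lambda_+\lambda_-=\mu^2$ and isometric iff moreover $\mu=1$; since $r$ is itself an isometry and normalizes these subgroups, $\Conf(\H)\cong\R^2\rtimes\Z/2$ and $\Isom(\H)\cong\R^1\rtimes\Z/2$. Finally, the nontrivial elements of the $\Z/2$-factor of $\Isom(\H)$ are the maps $\vn_+\mapsto\lambda\vn_-$, $\vn_-\mapsto\lambda^{-1}\vn_+$, $\vs\mapsto-\vs$ with $\lambda>0$; diagonalizing such a map shows it fixes the spacelike line $\R(\vn_++\lambda\vn_-)$ --- a tachyon orthogonal to the spine $\R\vs$ --- acts by $-1$ on the complementary timelike plane, and therefore has determinant $(+1)(-1)(-1)=1$ (orientation-preserving) while reversing time-orientation, as claimed.

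\emph{Main difficulty.} I expect the crux to be the two structural reductions --- that automorphisms fix the vertex and are diagonal or antidiagonal in the null frame --- together with the point, in the last step, of deciding which of the four components of $\Aff(\CP)$ actually preserves the chosen side $\H$; this last point is the only place where the concrete model of the crooked halfspace, and not merely that of the crooked plane, is needed. Once the automorphisms are in (anti)diagonal form the remaining verifications are routine.
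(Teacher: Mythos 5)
Your two structural reductions coincide with the paper's argument for Proposition~\ref{prop:AffineAutos}: automorphisms fix the vertex (hence are linear), preserve or swap the hinges, hence the wings and their null planes, hence the spine, and are therefore diagonal or antidiagonal in the null frame. The paper packages this slightly differently --- it composes with the fixed involution $\rho$ of \eqref{eq:rho} to reduce to the hinge-preserving case and shows those maps are positive diagonal --- but that difference is cosmetic.

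The genuine error is in the step you yourself identify as the crux: deciding which antidiagonal maps preserve the chosen side $\H$. You have the two cosets backwards. Writing $\vv=a\vs+b\xm{\vs}+c\xp{\vs}$, your swap $r\colon\xp{\vs}\leftrightarrow\xm{\vs}$, $\vs\mapsto-\vs$ acts by $(a,b,c)\mapsto(-a,c,b)$ and sends $(1,1,0)\in\H$ to $(-1,0,1)$, which fails the condition $c<0$ required when $a<0$; so $r$ does \emph{not} preserve $\H$. Your family $\vn_+\mapsto\lambda\vn_-$, $\vn_-\mapsto\lambda^{-1}\vn_+$, $\vs\mapsto-\vs$ ($\lambda>0$) is exactly the paper's family $R_t$ of reflections in particles of the stem, which the paper notes \emph{interchange} the two halfspaces complementary to $\CP$. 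The antidiagonal maps that do preserve $\H$ carry the extra sign, $\vn_+\mapsto-\lambda\vn_-$, $\vn_-\mapsto-\lambda^{-1}\vn_+$, $\vs\mapsto-\vs$ (the paper's $\rho$ composed with boosts); these are precisely the products of $r$ with the negative diagonal maps, i.e.\ the coset you discarded. Two downstream claims in your last paragraph fail for the same reason: the fixed line $\R(\vn_++\lambda\vn_-)$ of your map is \emph{timelike}, not spacelike, since $\ldot{\vn_+}{\vn_-}=-1<0$, so it is a particle rather than a tachyon; and your map fixes a future-pointing timelike vector, so it \emph{preserves} time-orientation, contradicting both your own conclusion and the theorem. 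The correct involutions fix the spacelike vector $\vn_+-\lambda\vn_-$ --- a genuine tachyon orthogonal to the spine --- and act by $-1$ on the complementary timelike plane, hence reverse time-orientation. The abstract isomorphism types $\R^k\rtimes\Z/2$ happen to survive your mistake, since either antidiagonal coset together with the positive diagonal matrices forms a group of that type, but the theorem's final sentence describing the involutions is exactly the content your computation gets wrong.
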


A fundamental notion in crooked geometry is the {\em stem quadrant\/}
$\Quad(\H)$
of a halfspace $\H$, related the subsemigroup $\V(\H)$ of $\V$ consisting
of translations preserving $\H$. The disjointness results of~\cite{DrummGoldman} can be easily expressed
in terms of this cone of translations.
In particular we prove:
\begin{thm*}
 Two crooked halfspaces are disjoint if and only if
\begin{equation}
\Vertex(\H_1) - \Vertex(\H_2) \in 
\V(\H_1) - \V(\H_2).
\end{equation}
\end{thm*}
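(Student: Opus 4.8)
The plan is to recast disjointness as a containment relation between a single pair of crooked halfspaces, and then to verify that relation by sliding each halfspace to a common vertex along translations drawn from its own semigroup. Write $p_i:=\Vertex(\H_i)$. I will use three facts from the previous sections: (i) $\V(\H)$ is the stem quadrant $\Quad(\H)$, so $v+\H\subseteq\H$ exactly when $v\in\V(\H)$, and $\V(v+\H)=\V(\H)$; (ii) for the complementary crooked halfspace $\H^{c}:=\E\setminus\overline{\H}$ one has $\Vertex(\H^{c})=\Vertex(\H)$, $\L(\H^{c})=\L(\H)^{c}$ by the complement part of the Linearization Theorem, and $\V(\H^{c})=-\V(\H)$, the last because a translation carries $\H$ into $\H$ exactly when its inverse carries $\H^{c}$ into $\H^{c}$; (iii) two crooked halfspaces with a common vertex are nested if and only if their linearizations are nested. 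Since translating $\H_1$ and $\H_2$ by a common vector affects neither side of the asserted equivalence, we may take $p_2=\origin$.

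The crooked plane $\partial\H_2$ separates $\E$ into $\H_2$ and $\H_2^{c}$, so because $\H_1$ is open and connected, $\H_1\cap\H_2=\emptyset$ if and only if $\H_1\subseteq\H_2^{c}$. Using $\V(\H_1)-\V(\H_2)=\V(\H_1)+\V(\H_2^{c})$ and $\Vertex(\H_2^{c})=\origin$, the theorem becomes
\[
\H_1\subseteq\H_2^{c}\ \Longleftrightarrow\ p_1\in\V(\H_1)+\V(\H_2^{c}).
\]
For the implication $\Leftarrow$, write $p_1=w_1+w_2$ with $w_1\in\V(\H_1)$, $w_2\in\V(\H_2^{c})$, and set $q:=p_1-w_1=w_2$. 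Then $\H_1\subseteq(-w_1)+\H_1$ and $w_2+\H_2^{c}\subseteq\H_2^{c}$ by (i), and the halfspaces $(-w_1)+\H_1$ and $w_2+\H_2^{c}$ both have vertex $q$, with linearizations $\L(\H_1)$ and $\L(\H_2)^{c}$ respectively. Unwinding the hypothesis $p_1\in\V(\H_1)+\V(\H_2^{c})$ in terms of the stem quadrants — which lie in the timelike planes carrying the bounding geodesics of $\L(\H_1)$ and $\L(\H_2)$ — one checks that it entails $\L(\H_1)\subseteq\L(\H_2)^{c}$; then (iii) gives $(-w_1)+\H_1\subseteq w_2+\H_2^{c}$, and chaining inclusions yields $\H_1\subseteq\H_2^{c}$.

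For $\Rightarrow$, assume $\H_1\subseteq\H_2^{c}$, so that $\overline{\H_1}\cap\H_2=\emptyset$ and $\H_1\cap\overline{\H_2}=\emptyset$. One must produce $w_1\in\V(\H_1)$ and $w_2\in\V(\H_2^{c})$ with $p_1=w_1+w_2$, and the idea is to extract them from the mutual position of $\partial\H_1$ and $\partial\H_2$: a wing of $\partial\H_1$ lies on one side of a wing, or of the stem, of $\partial\H_2$, and symmetrically with the roles exchanged, and these constraints — fed into the explicit inequalities defining a crooked halfspace from the earlier sections — force $p_1-p_2$ into $\V(\H_1)-\V(\H_2)$. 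This is in substance the Drumm--Goldman disjointness analysis of \cite{DrummGoldman}, reorganized around the cone of translations rather than around the individual wings, and carrying it out — including the several combinatorial sub-cases for how the wings and stems of the two crooked planes nest, and the degenerate configurations where a wing of one lies inside a wing of the other — is the main obstacle. (Alternatively, one can argue $\Rightarrow$ by contraposition: as $\V(\H_1)-\V(\H_2)$ is a convex cone, if $p_1-p_2$ lies outside it there is a linear functional nonnegative on the cone and negative at $p_1-p_2$, and tracking this functional through the coordinate model of the two halfspaces exhibits a point of $\H_1\cap\H_2$.) Implicit throughout is that the halfplanes $\L(\H_1)$ and $\L(\H_2)$ be disjoint — equivalently, that $\V(\H_1)-\V(\H_2)$ be a proper cone — which the Linearization Theorem shows is necessary for disjointness; so that theorem supplies the ``linear'' half of the statement, and what remains is exactly the ``affine'' bookkeeping of where the vertices may sit.
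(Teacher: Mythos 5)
Your forward direction (cone membership implies disjointness) is essentially the paper's own argument: write the vertex difference as $\vv_1-\vv_2$ with $\vv_i\in\V(\H_i)$, slide each halfspace back to a common vertex along its translational semigroup, and invoke disjointness of common-vertex halfspaces with consistently oriented directors. That part is sound. But note that your intermediate claim that the hypothesis $p_1\in\V(\H_1)+\V(\H_2^{c})$ ``entails $\L(\H_1)\subseteq\L(\H_2)^{c}$'' cannot be checked, because it is false: if $\H_1=\H_2$ then $\V(\H_1)-\V(\H_2)$ is the entire stem plane, the vertex condition holds trivially, and the linearizations coincide rather than being disjoint. The disjointness of the linearizations (equivalently, consistent orientation of the directors $\vs_1,\vs_2$) is a genuine additional hypothesis, present in the paper's precise statement (Theorem~\ref{thm:disjointness}) and only implicit in the introduction's version; you flag this at the end, but it must be an assumption, not a consequence of the vertex condition.

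The genuine gap is the converse: you state that extracting $w_1\in\V(\H_1)$, $w_2\in\V(\H_2^c)$ from a disjoint configuration ``is the main obstacle,'' and then you do not surmount it. Naming the Drumm--Goldman case analysis, or proposing a separating linear functional, is a plan, not a proof. The paper's proof of this direction has concrete content that your proposal omits entirely: it imports the explicit Drumm--Goldman inequalities characterizing disjointness --- in the ultraparallel case the single ``pyramid'' inequality \eqref{eq:drummgoldmanu} involving $\ldot{(p_2-p_1)}{(\lcross{\vs_1}{\vs_2})}$ and the absolute values $\vert\ldot{(p_2-p_1)}{\vs_i}\vert$, and in the asymptotic case the three inequalities \eqref{eq:drummgoldmana} --- and then verifies that the region these inequalities cut out is exactly the cone $\A(\vs_1,\vs_2)$ by checking that its four extreme rays are spanned by $\xm{\vs_1}$, $-\xp{\vs_1}$, $-\xm{\vs_2}$, $\xp{\vs_2}$, i.e.\ by the generators of $\V(\H_1)-\V(\H_2)$ supplied by Corollary~\ref{cor:StemQuadrant}. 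That identification of the Drumm--Goldman pyramid with the difference of translational semigroups is the actual point of the theorem, and it is the step your proposal leaves blank.
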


Finally these ideas are exploited to construct foliations of $\E$ by
crooked planes. Following our basic theme, we begin with a geodesic foliation of $\Ht$ and extend it to a crooked foliation of $\E$. Such 
foliations may be useful in understanding the deformation theory and the geometry of Margulis spacetimes. 

Figure~\ref{fig:CP} illustrates a crooked plane and the halfspaces which
it bounds.

\newpage
\begin{figure}
\centerline{\includegraphics{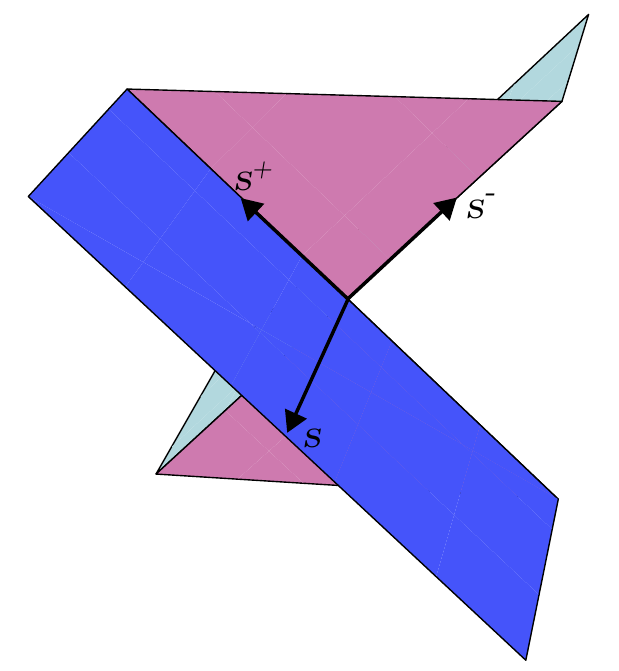}}
\caption{A crooked plane. Wings are halfplanes tangent to the null cone, 
and the stem are the two infinite timelike triangles.
The hinges bound the stem and the wings, and are parallel to the null
vectors $\xm{\vs},\xp{\vs}$. The spacelike vector $\vs$ is parallel to the
spine.}
\label{fig:CP}
\end{figure}

\section{Lorentzian geometry}

\subsection{$(2+1)$-dimensional Minkowski space}
A {\em Lorentzian vector space of dimension 3\/} is a real $3$-dimensional vector space $\V$
endowed with an inner product of signature $(2,1)$. 
The Lorentzian inner product will be denoted:
\begin{align*}
\V \times \V &\longrightarrow \R \\
(\vv, \vu) &\longmapsto  \ldot{\vv}{\vu}.
\end{align*}
We also fix an orientation on $\V$.
The orientation determines a nondegenerate alternating trilinear form
$$
\V\times\V\times\V \xrightarrow{\Det} \R
$$
which takes a positively oriented orthogonal basis $\ve_1,\ve_2,\ve_3$
with inner products
$$
\ldot{\ve_1}{\ve_1} = \ldot{\ve_2}{\ve_2} = 1,\ \ldot{\ve_3}{\ve_3} = -1
$$
to $1$. 
Denote the group of orientation-preserving linear automorphisms of $\V$ 
by $\GLp(3,\R)$.

The oriented Lorentzian $3$-dimensional vector space determines an alternating bilinear mapping
$\V\times\V\longrightarrow\V$, 
called the {\em Lorentzian cross-product,\/}
defined by
\begin{equation}\label{eq:cross}
\Det(\vu,\vv,\vw) = \ldot{\lcross{\vu}{\vv}}{\vw}.
\end{equation}
Compare, for example, with~\cite{DrummGoldman}.


In this paper, {\em Minkowski space\/} $\E$ will mean  a $3$-dimensional 
oriented geodesically complete $1$-connected flat Lorentzian manifold.
It is naturally an affine space having as its group of translations an oriented $3$-dimensional Lorentzian vector space $\V$.
Two points $p,q\in\E$ differ by a unique translation $\vv\in\V$, that is, there is
a unique vector $\vv$ such that
$$
\vv := 
p-q \in\V.
$$ 
We also write $p = q+\vv.$ 
Identify $\E$ with $\V$ by choosing a distinguished point
$o\in\V$, which we call an {\em origin.\/} 
For any point  $p\in\E$ there is a unique vector $\vv\in\V$ such that $p= o +\vv$.
Thus the choice of origin defines a bijection 
\begin{align*}
\V &\xrightarrow{A_o} \E \\
\vv  &\longmapsto  o + \vv.
\end{align*}
For any $o_1,o_2\in\E$, 
$$
A_{o_1} (\vv) = A_{o_2} \big(\vv + (o_1-o_2)\big)
$$
where
$o_1-o_2\in\V$
is the unique vector translating $o_2$ to $o_1$.
A transformation $\E\xrightarrow{T}\E$ normalizes
the group $\V$ of translations if and only if it is 
{\em affine,\/} that is, there is a linear transformation
(denoted $\L(T)$, and called its {\em linear part\/})
such that, for a choice $o$ of origin, 
$$
T(p) = o + \L(T)(p - o) + \vu
$$
for some vector $\vu\in\V$ (called the {\em translational
part of $T$\/}).

\subsection{Causal structure}\label{subsec:causal}
The inner product induces a {\em causal structure\/}  on $\V$: 
a vector $\vv\neq\origin$ is called
\begin{itemize}
 \item \emph{timelike} if $\ldot{\vv}{\vv} < 0$,
 \item \emph{null} (or {\em lightlike\/}) 
 if $\ldot{\vv}{\vv} = 0$,  or

 \item \emph{spacelike} if $\ldot{\vv}{\vv}> 0$.
\end{itemize}
We will call the corresponding subsets of $\V$ respectively 
$\V_-$,$\V_0$ and $\V_+$.
The set $\V_0$ of null vectors is called the {\em light cone}.

Say that vectors $\vu,\vv\in\V$ are 
{\em Lorentzian-perpendicular\/} if $\vu \cdot \vv = 0.$
Denote the linear subspace of vectors Lorentzian-perpendicular
to $\vv$ by $\vv^{\perp}$.  
A line $p+\R\vv$ or ray $p+\Rplus \vv$ is called
\begin{itemize}
 \item a \emph{particle} if $\vv$ is timelike,
 \item a \emph{photon} if $\vv$ is null, and
 \item a \emph{tachyon} if $\vv$ is spacelike.
\end{itemize} 
The set of timelike vectors admits two connected components.
Each component defines a {\em time-orientation} on $\V$.
Since each tangent space $T_p\E$ identifies with $\V$,
the time-orientation on $\V$ naturally carries over to $\E$.
We select one of the components and call it $\Fut$.
Call a non-spacelike vector $\vv \neq \origin$ 
and its corresponding ray {\em future-pointing} 
if $\vv$ lies in the closure of $\Fut$.

The time-orientation can be defined by a choice of a timelike vector
$\vt$ as follows. 
Consider the linear functional $\V \longrightarrow\R$ 
defined by:
$$
\vv\longmapsto \vv \cdot \vt.
$$
Then the future and past components can be distinguished by the sign of
this functional on the set of timelike vectors.

\subsubsection{Null frames}\label{sec:NullFrames}

The restriction of the inner product to the orthogonal complement 
$\vs^{\perp}$ of a spacelike vector $\vs$  is indefinite, having signature
$(1,1)$. The intersection of the light cone with $\vs^{\perp}$ consists of 
two photons intersecting at the origin.
Choose a linearly independent 
pair of future-pointing null vectors $\xpm{\vs}\in \vv^{\perp}$ such that:
$\{ \vs , \xm{\vs} , \xp{\vs} \}$ is a 
positively oriented basis for $\V$
(with respect to a fixed orientation on $\V$). 
The null vectors $\xm{\vs}$ and $\xp{\vs}$ are defined only 
up to positive scaling. 
The standard identity (compare \cite{DrummGoldman} ,
for example), for a unit spacelike vector $\vs$
\begin{align}\label{eq:PlusMinus}
\lcross{\vs}{\xm{\vs}} & =  - \xm{\vs} \notag \\
\lcross{\vs}{\xp{\vs}} & =   \xp{\vs},
\end{align}
will be useful. 

\begin{figure}[ht]
\centerline{\includegraphics{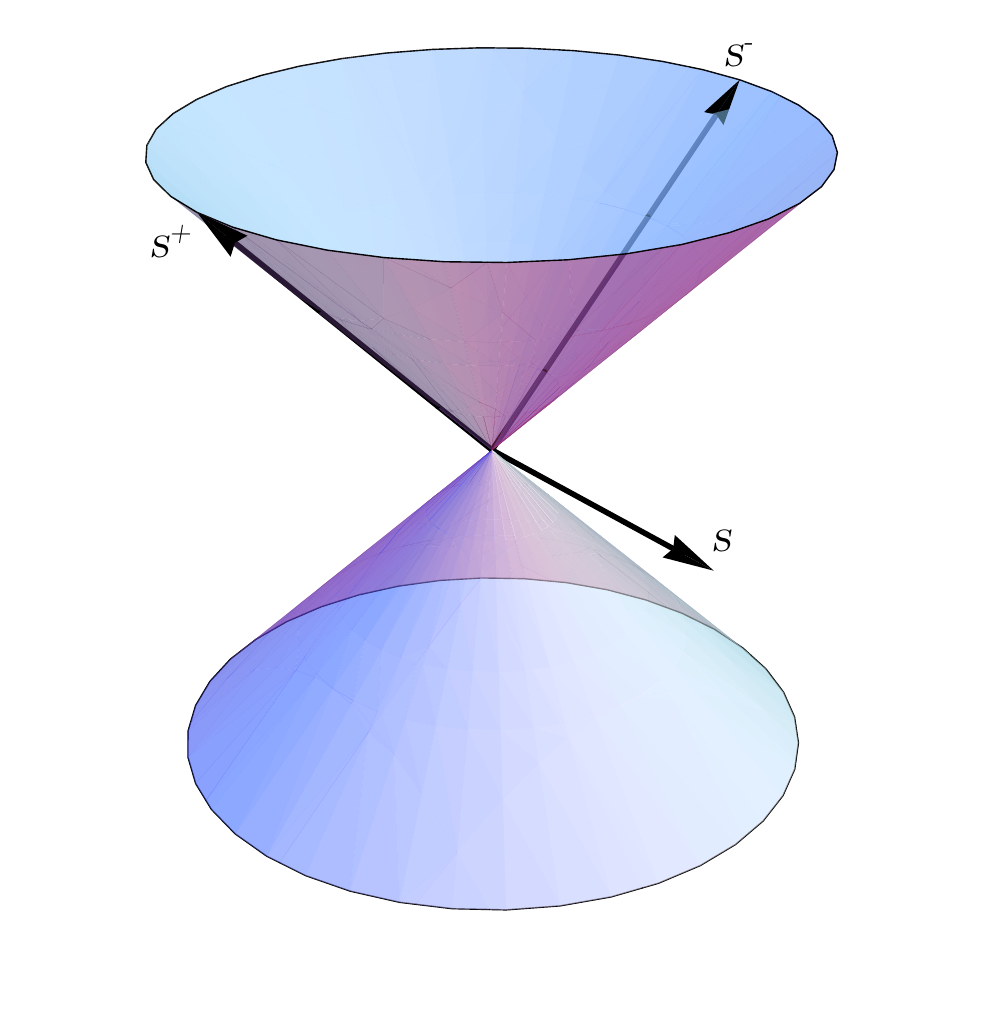}}
\caption{A null frame.}
\label{fig:nullframe}
\end{figure}

We call the positively oriented basis 
$\{ \vs , \xm{\vs} , \xp{\vs} \}$ a
{\em null frame\/} associated to $\vs$.
(Margulis~\cite{Margulis1,Margulis2}  takes the null vectors 
$ \xm{\vs} , \xp{\vs} $ to have unit Euclidean
 length.) %
We  instead require 
that they are future-pointing, 
normalize $\vs$ to be 
{\em unit-spacelike,\/} that is, 
$\ldot{\vs}{\vs} = 1$,
and choose 
$\xm{\vs}$ and $\xp{\vs}$ so that 
$\xm{\vs} \cdot \xp{\vs} = -1.$
In this normalized basis the corresponding
Gram matrix (the symmetric matrix of inner products) has the form
 $$
 \bmatrix 1 & 0 & 0 \\ 0 & 0 & -1 \\ 0 & -1 & 0 \endbmatrix.
 $$
The normalized null frame defines linear coordinates $(a,b,c)$ on $\V$:
$$
\vv := a \vs + b \xm{\vs} + c \xp{\vs} 
$$ 
so that in these coordinates the corresponding Lorentz metric on $\E$ is:
\begin{equation}\label{eq:MetricTensor}
da^2 -db\  dc.
\end{equation}

%

\subsection{Transformations of $\E$}\label{sec:transformations}
The orientation on the vector space
$\V$ defines an orientation
on the manifold $\E$. 
A linear automorphism of $\V$ preserves
orientation if and only if it has positive
determinant. An affine automorphism
of $\E$ preserves orientation if and only
if its linear part lies in the subgroup $\GLp(3,\R)$
of $\GL(3,\R)$ consisting of matrices of positive
determinant.
The group of orientation-preserving
affine automorphisms of $\E$ then
decomposes as a semidirect product:
$$
\Aff(\E)\ =\  \V \rtimes \GLp(3,\R).
$$

Denote the group of orthogonal automorphisms
(linear isometries) of $\V$ by $\Oto$ and the subgroup of
{\em orientation-preserving\/} isometries by $\SOto$.
Let 
$$
\SOto := \Oto \cap \GLp(3,\R)
$$ 
denote, as usual, the subgroup of orientation-preserving
linear isometries.
The group of orientation-preserving linear {\em conformal automorphisms\/}
of $\V$ is the product $\SOto \times \Rplus$, 
where $\Rplus$ is the one-parameter group of
{\em positive homotheties\/} $\vv \mapsto e^s \vv$.
(Compare \eqref{eq:homotheties}.)
Orientation-preserving isometries of $\E$ constitute the subgroup:
$$
\IsE\, : = \,  \V \rtimes \SOto 
$$
and the subgroup of orientation-preserving 
conformal automorphisms is:
$$
\Conf(\E)\ =\  \V \rtimes
\big(\SOto \times \Rplus\big)
$$
\subsubsection{Components of the isometry group}
The group $\Oto$ has four connected components.
The identity component $\SOoto$ consists of orientation-preserving
linear isometries preserving time-orientation.
It is isomorphic to the group $\PSLtR$ of orientation-preserving
isometries of the hyperbolic plane $\Ht$.
(The relationship with hyperbolic geometry will be explored in \S\ref{sec:hyp}.)
The group $\Oto$ is a semidirect product
$$
\Oto \cong  (\Z/2 \times\Z/2) \rtimes \SOoto
$$
where $\pi_0\big(\Oto\big) \cong  \Z/2 \times\Z/2$
is generated by reflection in a point (the antipodal map $\AA$, which
reverses orientation) and reflection in a tachyon (which preserves
orientation, but reverses time-orientation).

\subsubsection{Transvections, boosts, homotheties and reflections}
\label{sec:Tbhr}
In the null frame coordinates of \S\ref{sec:NullFrames} , 
the one-parameter group of linear isometries
\begin{equation}\label{eq:Transvections}
\xi_t \;:=\; \bmatrix 1 & 0 & 0 \\ 0 & e^t & 0 \\ 0 & 0 & e^{-t} \endbmatrix,
\end{equation}
(for $t\in\R$)
fixes $\vs$ and acts on the (indefinite) plane $\vs^\perp$.
These transformations, called {\em boosts,\/} 
constitute the identity component $\SOoo$ of the isometry
group of $\vv^\perp$.
The one-parameter group $\Rplus$ of
{\em positive homotheties\/}
\begin{equation}\label{eq:homotheties}
\eta_s \;:=\; \bmatrix e^s & 0 & 0 \\ 0 & e^s & 0 \\ 0 & 0 & e^s \endbmatrix
\end{equation}
(where $s\in\R$) acts conformally on Minkowski space,
preserving orientation.
The involution 
\begin{equation}\label{eq:rho}
\rho \;:=\; \bmatrix -1 & 0 & 0 \\ 0 & 0& -1 \\ 0 &  -1&0 \endbmatrix.
\end{equation}
preserves orientation, reverses time-orientation, 
reverses $\vs$,
and interchanges the two null lines 
$\R\xm{\vs}$ and $\R\xp{\vs}$.

\subsection{Octants, quadrants, solid quadrants}
The following terminology will be used in the sequel.
A {\em quadrant\/} in a vector space $\V$ is 
the set of nonnegative linear combinations of two linearly independent
vectors. 
A {\em quadrant\/} in an affine space $\E$ is the translate
of a point in $\E$ by a quadrant in the vector space underlying $\E$.
Similarly an {\em octant\/} in a vector space or affine space is obtained 
from nonnegative linear combinations of {\em three\/} linearly independent
vectors. A {\em solid quadrant\/} is the set of linear combinations
$ a \va + b \vb + c \vc$ where $a, b, c \in\R$,  $a,b \ge 0$ and $\va,\vb,\vc$
are linearly independent.

\subsection{Hyperbolic geometry}\label{sec:hyp}
The Klein-Beltrami projective model of hyperbolic geometry identifies
the hyperbolic plane $\Ht$ with the subset 
$\P(\V_-)$ of the real projective
plane $\P(\V)$ corresponding to particles (timelike lines).
Fixing an origin $o\in\E$ identifies the affine (Minkowski) space 
$\E$ with the Lorentzian vector space $\V$.
Thus the hyperbolic plane identifies with particles passing through $o$,
or equivalently translational equivalence classes 
(parallelism classes) of particles in $\E$.

\subsubsection{Orientations in $\Ht$}
An orientation of $\Ht$ is given by  
a {\em time-orientation\/} in 
$\V$, that is, a connected component of $\V_-$, as follows.
The subset
$$
\Htf := \big\{ \vv\in\Fut \mid \vv\cdot\vv = -1 \big\}
$$
of the selected connected component $\Fut$ of $\V_-$
is a cross-section for the  $\Rplus$-action $\V_-$
by homotheties:
the restriction of the quotient mapping
$$ 
\V\setminus\{0\} \longrightarrow \P(\V) 
$$
to $\Htf$ identifies $\Htf \xrightarrow{\cong}\Ht$.

The radial vector field on $\V$ is transverse to the 
hypersurface $\Htf$. %
Therefore the radial %
vector field, together
with the ambient orientation on $\V$, defines an orientation
on $\Ht$. 
Using the {\em past-pointing timelike 
vectors\/} for a model for $\Ht$ along with the fixed orientation of $\V$, 
we would obtain the opposite orientation. 
This follows since the antipodal map on $\V$ relates
future and past, and the antipodal map reverses orientation
(in dimension $3$). 
Fixing a time-orientation and reversing orientation in $\V$ reverses the induced
orientation on $\Ht$.

%
%

\subsubsection{Halfplanes in $\Ht$}\label{sec:hyperbolichalfplanes}
Just as points in $\Ht$ correspond to translational equivalence classes of particles in $\E$, 
geodesics in $\Ht$ 
correspond to %
translational equivalence classes of tachyons in $\E$.  
Given a spacelike vector $\vv\in\V$, the projectivization 
$\P(\vv^\perp)$ meets $\P(\V_-) \approx \Ht$ in a geodesic.
A geodesic in $\Ht$ separates $\Ht$ into two {\em halfplanes.}

With a time-orientation, spacelike vectors in $\V$ conveniently parametrize
halfplanes in $\Ht$.
Using the identification of $\Ht$ with $\Htf$ above, a spacelike vector 
$\vs$ determines a {\em halfplane} in $\Ht$:
$$
\h(\vs) := \big\{ \vv\in\Htf \mid\vv\cdot\vs \ge 0 \big\}
$$
bounded by the geodesic $\P(\vs^\perp)$. 
The complement of the geodesic 
$\P(\vs^\perp)$
in $\Ht$ consists of the interiors of the two halfplanes 
$\h(\vs)$ and $\h(-\vs)$.
%
Furthermore, using the fixed orientation on $\Ht$,  
an {\em oriented geodesic\/} $l\subset\Ht$ determines a halfplane whose boundary is $l$, as follows. Let $u\in l$ be a point and $v$ be the unit vector
tangent to $l$ at $u$ pointing in the {\em forward\/} direction, as determined
by the orientation of $l$. Choose the halfplane bounded by $l$ so that
the pair $(v,n)$ is positively oriented, where $n$ is an inward pointing normal vector to $l$ at $u$.

Transitivity of the action of $\mathsf{Isom}^+(\Ht)$ on 
oriented geodesics implies:

\begin{lemma}\label{lem:HtTransitivity}
The group of orientation-preserving isometries of $\Ht$ acts transitively
on the set of halfplanes in $\Ht$. The isotropy group of a halfplane
$\h(\vs)$ is the one-parameter group of transvections along the geodesic
$\partial\h(\vs)$.
\end{lemma}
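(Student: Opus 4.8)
The plan is to convert the statement into the classical fact that $\mathsf{Isom}^+(\Ht)\cong\PSLtR$ acts transitively on \emph{oriented} geodesics of $\Ht$ with one-parameter transvection stabilizers, and to match this with the spacelike-vector parametrization of halfplanes from \S\ref{sec:hyperbolichalfplanes}. First I would check that the assignment $\vs\mapsto\h(\vs)$, restricted to unit-spacelike vectors, is a bijection onto the set of halfplanes in $\Ht$: every geodesic of $\Ht$ is $\P(\vv^{\perp})$ for some spacelike $\vv$, which we normalize so that $\vv\cdot\vv=1$; the complement of this geodesic is the union of the interiors of $\h(\vs)$ and $\h(-\vs)$; and since the normalization $\vs\cdot\vs=1$ determines $\vs$ up to sign, the halfplane determines $\vs$. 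The orientation conventions of \S\ref{sec:hyperbolichalfplanes} moreover make this a bijection between halfplanes and oriented geodesics (a halfplane picks out the boundary orientation for which $(v,n)$ is positively oriented).

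Next I would establish transitivity by working in $\V$. The group $\SOoto$ acts on the unit-spacelike hyperboloid $\{\vv\in\V\mid \vv\cdot\vv=1\}$, which is connected and $2$-dimensional. The stabilizer of $\vs$ consists of maps with $g\vs=\vs$, hence fixes $\vs$ and restricts to a linear isometry of the $(1,1)$-plane $\vs^{\perp}$; so the stabilizer is at most $1$-dimensional. Therefore the $\SOoto$-orbit of $\vs$ is open, and by connectedness of the hyperboloid it is everything. Thus $\SOoto$ is transitive on unit-spacelike vectors and hence (via the bijection above) on halfplanes; since $\SOoto\cong\mathsf{Isom}^+(\Ht)$, the group of orientation-preserving isometries of $\Ht$ is transitive on halfplanes. (Alternatively one may simply invoke that $\PSLtR$ acts simply transitively on the unit tangent bundle of $\Ht$, hence transitively on oriented geodesics.)

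For the isotropy statement, suppose $g\in\SOoto$ preserves $\h(\vs)$. Then $g$ preserves the boundary geodesic $\P(\vs^{\perp})$, hence preserves $\vs^{\perp}$ and so the line $\R\vs=(\vs^{\perp})^{\perp}$, giving $g\vs=\pm\vs$. If $g\vs=-\vs$ then $g$ would send $\h(\vs)=\{\vv\cdot\vs\ge 0\}$ to $\h(-\vs)$, contrary to assumption; so $g\vs=\vs$. In a null frame associated to $\vs$, the block form $\mathrm{diag}(1,\,g|_{\vs^{\perp}})$ together with $\det g=1$ forces $\det(g|_{\vs^{\perp}})=1$, and $g\in\SOoto$ preserves time-orientation on $\vs^{\perp}$; hence $g|_{\vs^{\perp}}\in\SOoo$, i.e.\ $g=\xi_t$ for some $t\in\R$. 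Conversely each $\xi_t$ fixes $\vs$, hence preserves the sign of $\vv\cdot\vs$ and so preserves $\h(\vs)$. Under $\SOoto\cong\mathsf{Isom}^+(\Ht)$ the boosts fixing $\vs$ are precisely the transvections with axis $\P(\vs^{\perp})=\partial\h(\vs)$, which is the claim.

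The routine-but-delicate part is the bookkeeping of components and orientations: one must make sure the transformations in play genuinely lie in the identity component $\SOoto$ (equivalently $\mathsf{Isom}^+(\Ht)$), and that "preserving $\h(\vs)$" forces $g\vs=\vs$ rather than $g\vs=-\vs$. The two obvious elements stabilizing the geodesic $\P(\vs^{\perp})$ but not lying in the transvection group are excluded for exactly these reasons: reflection across $\P(\vs^{\perp})$ reverses the orientation of $\Ht$, and the half-turn about a point of $\P(\vs^{\perp})$ interchanges $\h(\vs)$ and $\h(-\vs)$. Once these sign issues are dispatched, both assertions follow immediately.
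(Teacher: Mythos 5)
Your proof is correct and follows essentially the same route as the paper, which simply deduces the lemma in one line from the correspondence between halfplanes and oriented geodesics together with the classical transitivity of $\mathsf{Isom}^+(\Ht)$ on oriented geodesics. You have merely made explicit what the paper leaves as a citation: the open-orbit argument on the unit-spacelike hyperboloid proves that classical fact, and your case analysis ($g\vs=\pm\vs$, determinant and time-orientation forcing $g|_{\vs^{\perp}}\in\SOoo$) correctly fills in the isotropy computation that the paper only illustrates by the example $\{\xi_t\}$.
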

\noindent 
In the example \eqref{eq:Transvections} in \S\ref{sec:Tbhr},
where 
\begin{equation}\label{eq:100vector}
\vs \ = \ \bmatrix 1 \\ 0 \\ 0 \endbmatrix,
\end{equation}
this one-parameter group of transvections is just  
$\{\xi_t,t\in\mathbb{R}\}$.

\subsection{Disjointness of halfplanes}

We use a  disjointness criterion for two halfplanes in $\Ht$ in terms
of the following definition:
\begin{defn}
Two spacelike vectors $\vs_1,\vs_2\in\V$ are \emph{consistently oriented} if 
 $\vs_1\cdot\vs_2 < 0$,  $\vs_1\cdot \vs_2^{\pm}\le 0$, and 
$\vs_1^\pm\cdot \vs_2\le 0$.
\end{defn}
\noindent
Given the orientation defined above on $\Ht$, two consistently oriented unit-spacelike
vectors have a useful characterization in terms of halfplanes.
\begin{lemma}\label{lem:consistentorientation}
Let  $\vs_1,\vs_2\in\V$ be spacelike vectors. The vectors $\vs_1$ and $\vs_2$ are consistently oriented if and 
only if the corresponding halfplanes 
$\h(\vs_1)$ and $\h(\vs_2)$ are disjoint.
\end{lemma}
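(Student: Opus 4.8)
The plan is to reduce to a normal form using the transitivity statement of Lemma~\ref{lem:HtTransitivity}, then verify the equivalence by an explicit computation in a well-chosen null frame. First I would observe that both sides of the claimed equivalence are invariant under the diagonal action of $\SOoto \cong \Isom^+(\Ht)$: the condition ``$\h(\vs_1)$ and $\h(\vs_2)$ disjoint'' is manifestly $\Isom^+(\Ht)$-invariant, and the condition ``$\vs_1,\vs_2$ consistently oriented'' is invariant because elements of $\SOoto$ preserve the inner product, preserve time-orientation (so they send future-pointing null vectors $\vs_i^\pm$ to positive multiples of the future-pointing null vectors associated to $g\vs_i$), and preserve orientation (so they respect the null-frame normalization up to positive scaling). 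Also both conditions are unchanged under positive rescaling of $\vs_1$ and $\vs_2$, so we may take both to be unit-spacelike. Hence it suffices to prove the lemma after moving $\vs_1$ to the standard vector $\vs = (1,0,0)^{\mathsf t}$ of \eqref{eq:100vector}, with associated null frame $\xm{\vs} = \xm{\ve}$, $\xp{\vs} = \xp{\ve}$ as in \S\ref{sec:NullFrames}.

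Next I would parametrize the second vector. Write $\vs_2 = a\vs + b\,\xm{\vs} + c\,\xp{\vs}$; unit-spacelike means $a^2 - 2bc = 1$. The isotropy group of $\h(\vs_1)$ is the transvection group $\{\xi_t\}$ of \eqref{eq:Transvections}, which acts on the $(b,c)$-coordinates by $(b,c)\mapsto(e^{t}b, e^{-t}c)$ and fixes $a$. Using this freedom I would normalize $\vs_2$ into a one- or two-parameter family (e.g.\ arrange $|b|=|c|$ when $bc>0$, or handle the degenerate strata $b=0$ or $c=0$ separately). Then I would simply write out the three numerical conditions defining consistent orientation: $\vs_1\cdot\vs_2 = a$, so $\vs_1\cdot\vs_2<0$ says $a<0$; and $\vs_1\cdot\vs_2^{\pm}$, $\vs_1^{\pm}\cdot\vs_2$ each translate via \eqref{eq:PlusMinus} and the Gram matrix into sign conditions on $b$ and $c$ (indeed $\vs_1^\pm$ are the frame vectors $\xpm{\vs}$, whose pairings with $\vs_2$ pick out $-c$ and $-b$ respectively). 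On the other side, disjointness of $\h(\vs_1)$ and $\h(\vs_2)$ is an assertion about the two geodesics $\P(\vs_1^\perp)$ and $\P(\vs_2^\perp)$ in the Klein model and the sides $\vv\cdot\vs_i\ge 0$ they bound; this I would test directly in $\Htf$-coordinates, checking whether the linear inequalities $\vv\cdot\vs_1\ge0$ and $\vv\cdot\vs_2\ge0$ have a common solution on the hyperboloid $\vv\cdot\vv=-1$, $\vv\in\Fut$. Both the consistency inequalities and the nonexistence-of-common-solution condition should collapse to the same statement, namely that $a\le -1$ together with compatible signs of $b,c$ (equivalently, the geodesics $\vs_i^\perp$ are ultraparallel or asymptotic and the chosen half-planes point away from each other).

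The main obstacle I anticipate is bookkeeping with the orientation conventions: the definition $\h(\vs) = \{\vv\in\Htf \mid \vv\cdot\vs\ge 0\}$ must be correlated with the ``positively oriented $(v,n)$'' description in \S\ref{sec:hyperbolichalfplanes}, and one has to be careful that the two future-pointing null directions $\xm{\vs},\xp{\vs}$ are attached to $\vs$ with the correct handedness so that the sign conditions $\vs_1\cdot\vs_2^\pm\le 0$ and $\vs_1^\pm\cdot\vs_2\le 0$ really are the ones that force the half-planes apart rather than making them nest. I would resolve this by checking one concrete non-degenerate case (e.g.\ $\vs_2 = a\vs + \xm{\vs} + \xp{\vs}$ with $a<-1$) against a picture in the Klein disk to fix all signs, and then argue that the remaining cases follow by the transvection normalization and continuity/limiting arguments for the boundary strata $bc=0$. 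A secondary subtlety is the boundary case $\vs_1\cdot\vs_2 = -1$ exactly (geodesics sharing an ideal endpoint), where ``disjoint'' is interpreted as having disjoint closures in $\Ht$ itself; the non-strict inequalities in the definition of consistently oriented are designed precisely to include this case, and I would note that the computation handles it uniformly.
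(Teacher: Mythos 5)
Your plan is sound and would yield a correct proof, but it is a genuinely different route from the paper's. The paper never normalizes $\vs_1$: it associates to each spacelike vector the open octant $A(\vs_i)=\{a\vs_i+b\xm{\vs_i}+c\xp{\vs_i}\mid a,b,c>0\}$, observes that $\h(\vs_i)=A(\vs_i)\cap\Ht$, and then gets the forward implication by pure convexity --- consistent orientation says each of the three generators of $A(\vs_1)$ pairs non-positively with $\vs_2$ (and $\vs_1\cdot\vs_2$ strictly negatively), hence so does every positive combination, whereas every element of $A(\vs_2)$ pairs positively with $\vs_2$; the converse is dispatched in one sentence by reversing this. Your approach --- move $\vs_1$ to $(1,0,0)^{\mathsf t}$ by $\SOoto$-equivariance, expand $\vs_2=a\vs+b\xm{\vs}+c\xp{\vs}$, and match the sign conditions $a<0$, $b\ge0$, $c\ge0$ (note $a^2-2bc=1$ then forces $a\le-1$) against solvability of $\vv\cdot\vs_1\ge0$, $\vv\cdot\vs_2\ge0$ on $\Htf$ --- costs more computation (in particular you still owe the explicit form of $\xpm{\vs_2}$, or an argument that the conditions $\vs_1\cdot\xpm{\vs_2}\le0$ are implied by the others in this normalization, which they are), but it buys a fully explicit converse, which is the half the paper treats most tersely, and it makes the boundary strata $bc=0$ visible rather than hidden in a limiting argument. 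The one edge case both treatments skirt is $\vs_2=-\vs_1$, where the pair is consistently oriented by the definition yet the closed halfplanes $\h(\vs_1)$ and $\h(-\vs_1)$ share their boundary geodesic; this is an imprecision in the statement (closed versus open halfplanes) rather than a defect of your argument.
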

Before describing the proof, we give two simple examples illustrating the 
concept of 
consistent orientation. Consider the unit spacelike vectors
$$
\vs_1 := \bmatrix -1 \\ 0 \\ 0 \endbmatrix, \ 
\vs_2 := \bmatrix \cosh(t) \\ 0 \\ \sinh(t) \endbmatrix
$$
where the Lorentzian structure is defined by the quadratic form
$$
\bmatrix x \\ y \\ z \endbmatrix \longmapsto  x^2 + y^2 - z^2.
$$
Then $\vs_1\cdot \vs_2 = -\cosh(t) \le -1 < 0$ for all $t$.
The condition that $\vs_1\cdot\vs_2 \le -1$ 
ensures that the corresponding geodesics in $\Ht$ are disjoint
(or identical). However, even if the geodesics are ultraparallel or asymptotic,
the halfplanes may be nested or intersect in a slab. The conditions
on $\xpm{\vs_i}$ exclude these cases.

$\Ht$ identifies with the unit disc $x^2 + y^2 < 1$
in the affine hyperplane defined by $z=1$. 
The corresponding halfplanes $\h(\vs_i)$ are then defined by:
\begin{align*}
\h(\vs_i) & = \{ (x,y) \mid  x < 0\} \\
\h(\vs_2) &= \{ (x,y) \mid  x > \tanh(t) \}.
\end{align*}
which are disjoint if and only if $t > 0$. 

Now 
$$
\xpm{\vs_1} = \bmatrix 0 \\ \mp 1 \\ 0 \endbmatrix,\quad
\xpm{\vs_2} = \bmatrix \sinh(t) \\ \pm 1 \\ \cosh(t) \endbmatrix,
$$
so 
$$
\vs_1\cdot \xpm{\vs_2} = 
\vs_2\cdot \xpm{\vs_1} = -\sinh(t).
$$ 
Thus $\h(\vs_1)\cap \h(\vs_2) =\emptyset$ if and only if $\vs_1$ and $\vs_2$
are consistently oriented.

A similar example where the corresponding geodesics are
asymptotic occurs with the same $\vs_1$ but
\begin{equation*}
\vs_2 = \bmatrix 1 \\ 1 \\ 1\endbmatrix.
\end{equation*}
Then $\xm{\vs_2} = \xp{\vs_1}$ above and
\begin{equation*}
\xp{\vs_2} = \bmatrix 1 \\ 0 \\ 1 \endbmatrix
\end{equation*}
and pair $(\vs_1,\vs_2)$ is consistently oriented. The halfplane
$\h(\vs_2)$ is defined by $x + y > 1$ which is disjoint from
$\h(\vs_1)$. Compare Figure~\ref{fig:halfplanes}.

\begin{figure}
  \centering
  \begin{subfigure}[b]{0.4\textwidth}
     \begin{overpic}[scale=.5,tics=10]{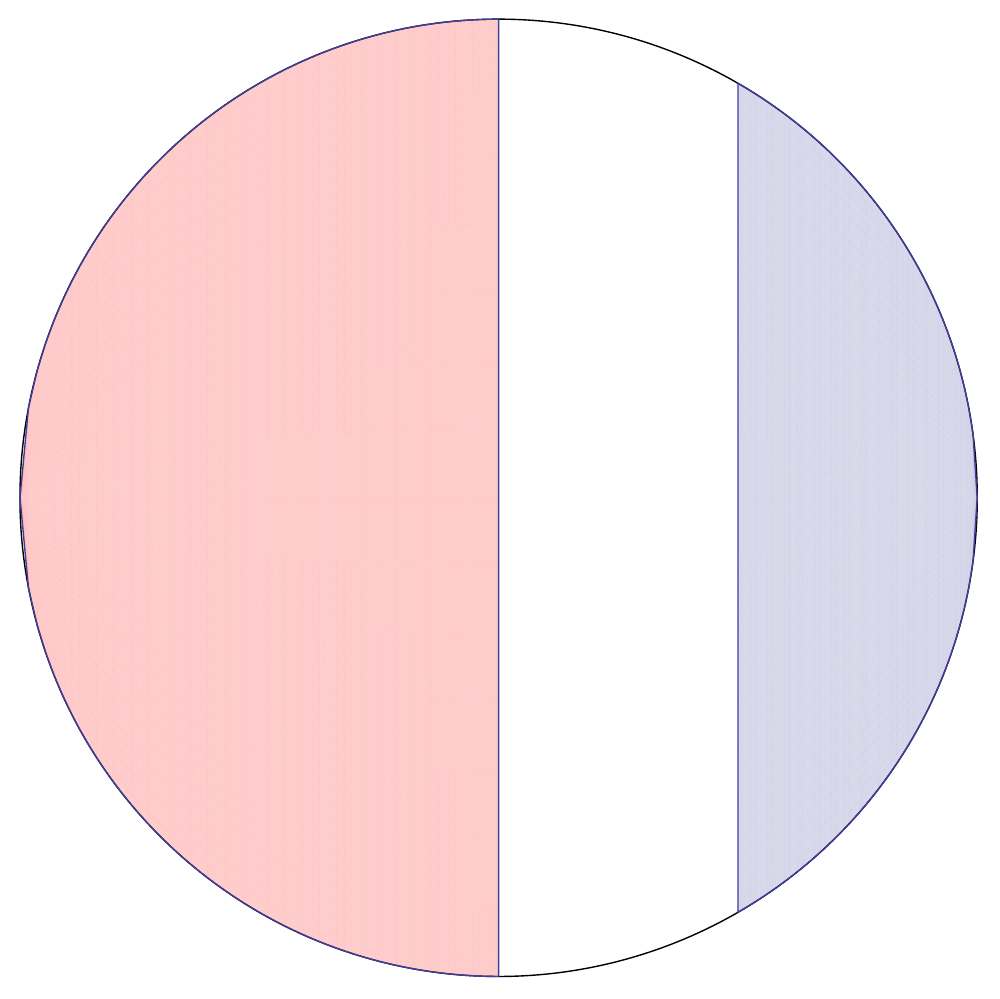}
      \put(20,50){$\mathfrak{h}(s_1)$}
      \put(80,50){$\mathfrak{h}(s_2)$}
     \end{overpic}
  \end{subfigure}
  \begin{subfigure}[b]{0.4\textwidth}
      \begin{overpic}[scale=.5,tics=10]{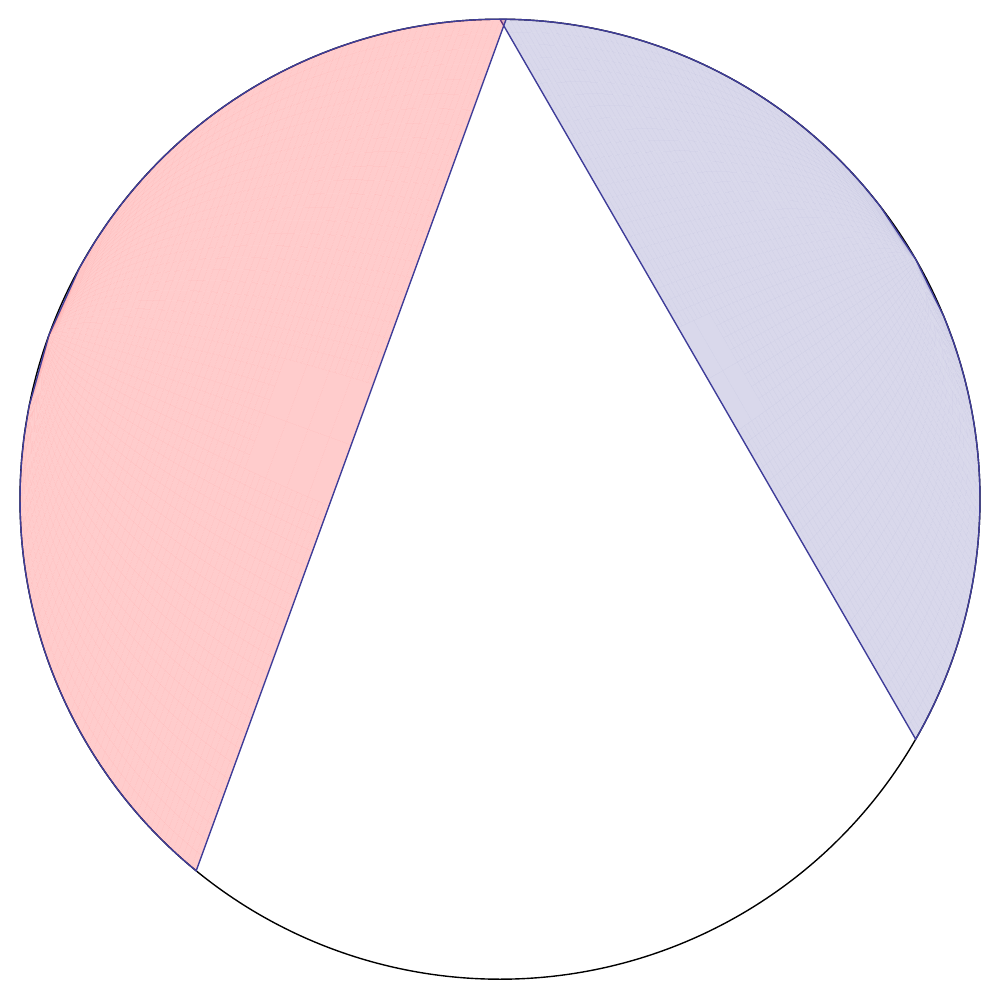}
      \put(20,70){$\mathfrak{h}(s_1)$}
      \put(70,70){$\mathfrak{h}(s_2)$}
      \end{overpic}
  \end{subfigure}
    \caption{Consistently oriented halfplanes. 
  The first picture depicts halfplanes bounded by ultraparallel geodesics,
  and the second picture depicts halfplanes bounded by asymptotic
  geodesics.}
\label{fig:halfplanes}
\end{figure}

%


\begin{proof}[Proof of Lemma~\ref{lem:consistentorientation}]
Given a spacelike vector $\vs_i$, the solid quadrant defined by 
combinations $a \vs_i + b\vs_i^- + c\vs_i^+ $ where $a,b,c\in\R$ and 
$b,c >0$ contains all of the future-pointing timelike vectors. 
Moreover, the octant                                                                                                                         
\[
A(\vs) := \{ a \vs_i + b\vs_i^- + c\vs_i^+  \, \vert \, a,b,c >0 \}
\]
defines the halfplane $\h(\vs_i)$. In particular, 
$$
\h(\vs_i) = A(\vs_i )\cap \Ht .
$$

Suppose that 
$\vs_1$ and $ \vs_2$ are consistently oriented spacelike vectors. By definition, any vector in $A(\vs_1)$ is a
positive linear combination of vectors whose inner product with 
$\vs_2$ is negative, so its inner product with $\vs_2$ is negative. 
Thus
$A(\vs_1)\ \cap\ A(\vs_2)\ =\emptyset$ and 
$\h(\vs_1)\ \cap\ \h(\vs_2)\ =\ \emptyset$.

Now suppose that 
$
\h(\vs_1)\  \cap\ \h(\vs_2)\ =\ \emptyset.
$ 
Then $A(\vs_1)\cap A(\vs_2) = \emptyset$, and  $\vs_i$ 
and $\vs_i^{\pm}$ all have a negative inner product with $\vs_j$,
as desired.
\end{proof}

\section{Crooked halfspaces}
In this section we define crooked halfspaces and describe their basic structure.
Following our earlier papers, we consider an {\em open\/} crooked halfspace
$\H$, denoting its closure by $\cH$ and its complement by $\H^c$.

A crooked halfspace $\H$ is bounded by a crooked plane $\partial\H$.
A crooked plane is a $2$-dimensional polyhedron with $4$ faces, which is
homeomorphic to $\R^2$. It is non-differentiable along two lines (called
{\em hinges\/}) meeting in a point (called the {\em vertex\/}). 
The hinges are null lines bounding null halfplanes in $\E$ 
(called {\em wings\/}).
(Null halfplanes in $\E$ are defined below in \S\ref{sec:NullHalfplanes}.)
The wings are connected by the union of two quadrants in the plane
containing the hinges. Call the plane spanned by the two hinges the
{\em stem plane\/} and denote it $\Ss$.  
The hinges are the only null lines contained in $\Ss.$
The union of the hinges and all timelike lines in $\Ss$ forms the {\em stem.\/} 
The stem plane may be equivalently defined as the unique plane containing the stem.

\subsection{The crooked halfspace\/}
We explicitly compute a crooked halfspace in the 
coordinates $(a,b,c)$ defined in \S\ref{sec:NullFrames}.  Recall that in those coordinates the Lorentzian metric tensor equals $da^2 - db\ dc$.  
Lemma~\ref{lem:transitivity} (discussed in \S\ref{sec:transitivity})
asserts that 
all crooked halfspaces are $\IsE$-equivalent.

\subsubsection{The director and the vertex}
Let $\vs\in\V$ be a (unit-) spacelike vector and $p\in\E$.
Then the {\em (open) crooked halfspace directed by $\vs$ and 
vertexed at $p$\/} is the union:
\begin{align}\label{eq:HSDef}
\H(p,\vs) :=\, 
&
\{ q\in\E \, \vline \,
	\ldot{(q-p)}{\xp{\vs}}   <  0
				  \mbox{ and } 
		    \ldot{(q-p)}{\vs}  > 0  
		    \}  \;  \cup \notag\\	
&\{ 
q\in\E \, \vline \,
	\ldot{(q-p)}{\xp{\vs}}   <  0,
	\ldot{(q-p)}{\xm{\vs}}  > 0,			 
				 \mbox{ and } 
		    \ldot{(q-p)}{\vs}  =  0  %
		    \}  \  \cup \notag\\	
&\{ 
q\in\E \, \vline \,
 		    \ldot{(q-p)}{\xm{\vs}}  > 0  
			\mbox{ and } 
		    \ldot{(q-p)}{\vs}  < 0 
         \} 
\end{align}

\noindent
Its closure is the
{\em closed crooked halfspace\/}
with director $\vs$ and vertex $p$, %
defined as the union:


\begin{alignat}{2}\label{eq:ClHSDef}
\cH(p,\vs) :=\, 
&
\{ q\in\E \, \vline \,
	\ldot{(q-p)}{\xp{\vs}}   \leq 0
				 && \mbox{ and } 
		    \ldot{(q-p)}{\vs}  \geq 0  
		    \}     \notag\\	
 \cup\; &  \{ 
q\in\E \, \vline \,
 		    \ldot{(q-p)}{\xm{\vs}}  \geq 0  &&
			\mbox{ and } 
		    \ldot{(q-p)}{\vs}  \leq 0 
         \} 
\end{alignat}
Write:
$$
p =: \Vertex\big( 
\cH(p,\vs) 
\big) \ = \ 
\Vertex\big( \H(p,\vs)  \big).
$$
The director and vertex of the halfspace are
the unit-space\-like vector and the point:
$$
\vs = \bmatrix 1 \\ 0 \\ 0 \endbmatrix, \quad
p = \bmatrix 0 \\ 0 \\ 0 \endbmatrix,
$$
respectively. 
Null vectors corresponding to $\vs$ are:
$$
\xm{\vs} = \bmatrix 0 \\ 1 \\ 0 \endbmatrix, \quad
\xp{\vs} = \bmatrix 0 \\ 0 \\ 1 \endbmatrix.
$$
so that in above coordinates $\H$ is defined by the inequalities:
\begin{align*}
b > 0 &\ \text{~~if}\  a >  0 \\
b > 0 > c &\ \text{~~if}\  a =  0 \\
0 > c &\ \text{~~if}\  a < 0. 
\end{align*}
The corresponding closed crooked halfspace $\cH$ is defined by:
\begin{align*}
b \ge 0 &\ \text{~~if}\  a >  0 \\
b \ge 0 \text{~or~} 0 \ge c &\ \text{~~if}\  a =  0 \\
0 \ge c &\ \text{~~if}\  a < 0. 
\end{align*}
\noindent
\subsubsection{Octant notation}
The shape of a crooked halfspace suggests the following notation:
$$
\cH = \{ a,b \ge 0 \} \cup \{ a,c \le 0 \}.
$$
The three coordinate planes for $(a,b,c)$  divides $\E$ (identified with $\V$)
into eight open octants, depending on the signs of these three coordinates.
Denote a subset of $\E$ by an ordered triple of symbols such as $+, -, 0, \pm$
to describe whether the corresponding coordinate is respectively positive, negative, zero, or 
arbitrary. For example the positive octant is $(+,+,+)$
and the negative octant is $(-,-,-)$. In this notation, the open crooked halfspace $\H(\vs,p)$ is the union
$$
(+, +,\pm) \cup (0,+,-) \cup (-,\pm,-).
$$

\subsubsection{The hinges and the stem plane}
The {\em hinges\/} of $\H$ are the lines through the vertex parallel to the null vectors $\xm{\vs}, \xp{\vs}$:
\begin{align*}
h_-(p,\vs) &:= p + \R\xm{\vs}, \\ h_+(p,\vs)  &:= p + \R\xp{\vs}.
\end{align*}
so the {\em stem plane\/} (the affine plane spanned by the hinges) equals:
$$
\Ss(p,\vs) := p + \vs^\perp.
$$
In octant notation, $h_-  = (0,\pm,0)$, $h_+ = (0,0,\pm)$ 
and the stem plane is the coordinate plane $(0,\pm,\pm)$ defined by $a = 0$.

\subsubsection{The stem}
The stem consists of timelike directions inside the light cone in the stem plane.
That is,
$$
\Stem(p,\vs) := \{ p + \vv \mid  \vv\in\vs^\perp, \  \vv\cdot \vv \le 0 \}.
$$
In octant notation the stem is $(0,+,+)\cup(0,-,-)$ 
and is defined by:
$$ 
a = 0,\  bc > 0.
$$ 
The stem decomposes into two components: 
a {\em future\/} stem $(0,+,+)$ and a {\em past\/} stem $(0,-,-)$.
Of course the boundary $\partial\Stem$ is the union of the hinges 
$h_-\cup h_+$.

\subsubsection{Particles in the stem}
Particles in the stem also determine involutions which interchange the pair of halfspaces
complementary to $\CP$. Particles are lines
spanned by the future-pointing timelike vectors
$$
\vt_t :=  \bmatrix 0 \\ e^t \\ e^{-t} \endbmatrix ,
$$
for any $t\in\R$, with the corresponding particles defined by $a = 0$, $ b = e^{2t} c$. 
The corresponding reflection is:
$$
\bmatrix a \\ b \\ c \endbmatrix
\stackrel{R_t}\longmapsto 
\bmatrix -a \\  e^{2t} c \\ e^{-2t} b \endbmatrix.
$$
\noindent
See~\cite{Charette3} for a detailed study of 
involutions of $\E$. 

\subsection{The wings}
The wings are defined by a construction (denoted $\W$) involving the orientation of $\V$.
We associate to every null vector $\vn$ a {\em null halfplane\/} 
$\W(\vn)\subset\V$
and to every null line $p + \R\vn$ the affine null halfplane $p + \W(\vn)$.
Define the {\em wings\/} of the halfspace  $\H(\vs,p)$ as $p + \W(\xm{\vs})$
and $p + \W(\xp{\vs})$ respectively.

\subsubsection{Null halfplanes}\label{sec:NullHalfplanes}
Let $\vn$ be a future-pointing null vector. 
Its orthogonal plane $\vn^{\perp}$ is tangent to the light cone.
Then the line $\R\vn$ lies in the plane $\vn^\perp$.
The complement $\vn^\perp \setminus \R\vn$
has two components, called {\em null halfplanes.\/}
Consider a spacelike vector $\vv \in \vn^\perp$. Then $\vn$ is either a multiple of 
$\vv^+$ or $\vv^-$. Two spacelike vectors $\vv,~\vw\in\vn^\perp$ are in the same halfplane if and only if 
$$
\xp{\vv}=\xp{\vw}=\vn \ \text{~or~}\  \xm{\vv}=\xm{\vw}=\vn,
$$
up to scaling by a positive real.  A spacelike vector $\vs\in\V$ thus unambiguously defines 
the following (positively extended) wing:
%
\begin{equation}\label{eq:WingDef}
 \W(\vs) :=\, \{ \vw \in \V \mid  \ldot{\vw}{\vs} \geq 0
	\mbox{ and } \ldot{\vw}{ \xp{\vs}}   =0 \} .
\end{equation}
Each hinge bounds a wing. 
The wings bounded by  the hinges 
$h_- = (0,0,\pm) 
$ and $h_+ = 
(0,\pm,0)$ are defined, respectively, by:
\begin{align*}
\W_- &:= (+,0,\pm) = \{ a \ge 0, b = 0\}, \\
\W_+ &:= (-,\pm,0) = \{ a \le 0, c = 0\} .
\end{align*}

\subsubsection{The spine}\label{sec:spine}
A crooked plane $\CP$ contains a unique tachyon $\sigma$ called its 
{\em spine.\/} The spine is the line through $\Vertex(\CP)$ parallel
to the director of $\CP$.
It lies in the union of the two wings, and is orthogonal to each hinge.
The spine is defined by $b = c = 0$ or $(\pm,0,0)$ in quadrant notation.

Reflection $R$ in the spine interchanges the halfspaces complementary to $\CP$. In the usual coordinates it is:
\begin{equation}\label{eq:spine}
\bmatrix a \\ b \\ c \endbmatrix
\stackrel{R}\longmapsto 
\bmatrix a \\ - b \\ - c \endbmatrix.
\end{equation}
Furthermore each halfspace complementary to $\CP$ is a fundamental domain for
$\langle R\rangle.$ 

\subsubsection{The role of orientation}
The orientation of $\E$ is crucially used to define wings.
Since the group of all automorphisms of $\E$ is a double extension
of the group of orientation-preserving automorphisms by the antipodal
map $\AA$, one obtains a parallel but opposite theory 
by composing with $\AA$. (Alternatively, one could work with negatively
oriented bases to define null frames etc.)
{ \em Negatively extended\/} crooked halfspaces and crooked planes  are defined as in \eqref{eq:HSDef} 
except that all the inequalities involving $\ldot{(q-p)}{\vv}$ are reversed.
In this paper we fix the orientation of $\E$ and thus only consider
positively extended halfspaces.
For more details, see \cite{DrummGoldman}.

\subsection{The bounding crooked plane}
If $\H(p,\vs)$ is a halfspace, 
then its boundary $\partial\H(p,\vs)$ is a crooked plane,  denoted 
$\CP(p,\vs)$.
A crooked plane is the union of its stem and two wings along the hinges
which meet at the vertex. 
Observe that the complement of $\H(p,\vv)$ is the closed crooked halfspace
$\cH(p,-\vv)$ and
$$
\partial\H(p,\vv) = \CP(p,\vv)  = \partial\H(p,-\vv).
$$

\subsection{Transitivity}\label{sec:transitivity}
For calculations it suffices to consider only one example of a crooked halfspace 
thanks to:

\begin{lemma}\label{lem:transitivity}
The group $\IsE$ acts transitively on the set of (positively oriented) crooked halfspaces in $\E$.
\end{lemma}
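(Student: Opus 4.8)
The plan is to show that any crooked halfspace $\H(p,\vs)$ can be carried to the standard model $\H(\origin,\ve_1)$ — where $\ve_1 = (1,0,0)^T$ in a normalized null frame — by an orientation-preserving isometry of $\E$. Since isometries form a group, transitivity follows. First I would translate: the translation by $\origin - p$ (an element of $\V \subset \IsE$) carries $\H(p,\vs)$ to $\H(\origin, \vs)$, so it suffices to treat halfspaces vertexed at the origin, where the problem becomes linear. Replacing $\vs$ by $\vs/\sqrt{\ldot{\vs}{\vs}}$ (which does not change the defining inequalities in \eqref{eq:HSDef}, since they are homogeneous in $\vs$ and in $\xpm{\vs}$), I may assume $\vs$ is unit-spacelike.

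Next I would invoke the existence of a normalized null frame $\{\vs, \xm{\vs}, \xp{\vs}\}$ associated to $\vs$, as constructed in \S\ref{sec:NullFrames}: this is a positively oriented basis with Gram matrix $\bmatrix 1 & 0 & 0 \\ 0 & 0 & -1 \\ 0 & -1 & 0 \endbmatrix$, exactly the Gram matrix of the standard frame $\{\ve_1,\ve_2,\ve_3\}$. Let $g \in \GL(3,\R)$ be the linear map sending $\ve_1 \mapsto \vs$, $\ve_2 \mapsto \xm{\vs}$, $\ve_3 \mapsto \xp{\vs}$. Because both frames have the same Gram matrix, $g$ preserves the Lorentzian inner product, so $g \in \Oto$; because both are positively oriented bases, $g$ has positive determinant, so in fact $g \in \SOto \subset \IsE$. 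By construction $g$ maps $\ve_1 \mapsto \vs$, $\xpm{\ve_1} = \ve_{3},\ve_2 \mapsto \xpm{\vs}$ (using the standard identifications $\xm{\ve_1}=\ve_2$, $\xp{\ve_1}=\ve_3$ recorded in the excerpt), hence $g$ carries each defining inequality of $\H(\origin,\ve_1)$ in \eqref{eq:HSDef} — namely $\ldot{q}{\xp{\ve_1}}<0$, $\ldot{q}{\ve_1}>0$, etc. — to the corresponding inequality for $\H(\origin,\vs)$. Therefore $g\big(\H(\origin,\ve_1)\big) = \H(\origin,\vs)$, and composing with the earlier translation gives an element of $\IsE$ carrying the standard halfspace onto $\H(p,\vs)$.

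The one point requiring a little care — the main obstacle, such as it is — is the existence and orientation bookkeeping of the null frame: one must check that for a unit-spacelike $\vs$ one can always choose the two future-pointing null vectors $\xm{\vs},\xp{\vs}$ in $\vs^\perp$ so that $\{\vs,\xm{\vs},\xp{\vs}\}$ is \emph{positively} oriented and $\xm{\vs}\cdot\xp{\vs} = -1$, and that this pins down the frame up to the residual freedom $\xm{\vs}\mapsto \lambda\xm{\vs}$, $\xp{\vs}\mapsto \lambda^{-1}\xp{\vs}$ (which is absorbed by the boosts $\xi_t$ of \eqref{eq:Transvections} and is harmless since the inequalities are homogeneous). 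This is precisely the content of \S\ref{sec:NullFrames}, so I would simply cite it. Everything else is the observation that equality of Gram matrices plus agreement of orientations forces the change-of-basis map into $\SOto$, and that the inequalities defining a crooked halfspace are expressed entirely through the frame vectors and so transform correctly. Note finally that only positively extended halfspaces are considered (as stipulated in the excerpt), so no issue with the antipodal map $\AA$ arises.
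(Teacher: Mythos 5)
Your proposal is correct and follows essentially the same route as the paper: reduce to the pair (vertex, director), use translations to move the vertex and the linear isometry group to move the unit-spacelike director, and observe that such pairs determine the halfspace. The only difference is that you establish the transitivity of $\SOto$ on unit-spacelike directors directly via the null-frame/Gram-matrix argument, whereas the paper delegates that step to Lemma~\ref{lem:HtTransitivity}; both are fine.
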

\begin{proof}
The group $\V$ acts transitively on the set of points $p\in\E$ and by Lemma~\ref{lem:HtTransitivity} 
$\SOto$ acts transitively on the set of unit spacelike vectors $\vs$. Thus $\IsE$ acts transitively 
on the set of pairs $(p,\vs)$ where
$p\in\E$ is a point and $\vs$ is a unit-spacelike vector.
Since such pairs determine crooked half spaces, $\IsE$ acts transitively
on crooked halfspaces.
\end{proof}
In a similar way, the full group of (possibly orientation-reversing) isometries
of $\E$ acts transitively on the set of (possibly negatively extended)
crooked halfspaces.

\subsection{The stem quadrant}
A particularly important part of the structure of a crooked halfspace  $\H$ 
is its {\em stem quadrant $\Quad(\H)$,\/}  
defined as the closure of the intersection of 
$\H$ with its stem plane $\Ss(\H)$ and denoted:
\begin{equation}\label{eq:StemQuadrant}
\Quad(\H) := \overline{\big(\H \cap \Ss(\H)\big)}\ \subset \E.
\end{equation}
Closely related is the {\em translational semigroup $\V(\H)$,\/}
defined as the set of translations preserving $\H$:
\begin{equation}\label{eq:Semigroup}
\V(\H) := \{ \vv\in\V \mid  \H + \vv \subset \H \} \ \subset \V.
\end{equation}
\bigskip\bigskip\bigskip

\begin{figure}[hb]
\centerline{\includegraphics{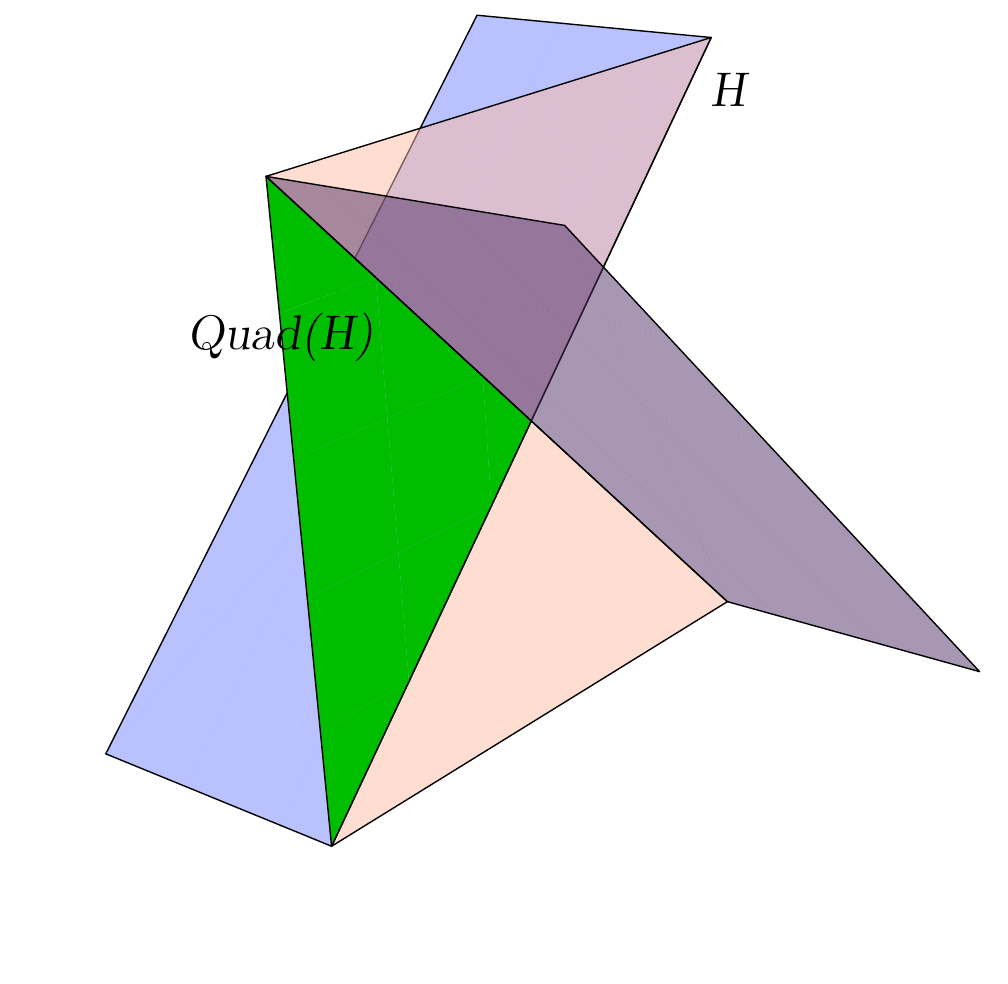}}
\caption{The stem quadrant of a crooked halfspace.}
\label{fig:stemquadrant}
\end{figure}

\begin{prop}\label{thm:StemQuadrant}
Let $\H$ be a crooked halfspace with vertex 
$$
p:= \Vertex(\H)\in\E,
$$
stem quadrant $\Quad(\H)\subset\E$, and translational semigroup
$\V(\H) \subset \V$. Then:
$$
\Quad(\H)\ =\ 
p + \V(\H).
$$
\end{prop}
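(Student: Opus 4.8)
The plan is to reduce everything to the explicit model computed above, using Lemma~\ref{lem:transitivity}. Since $\IsE$ acts transitively on crooked halfspaces and conjugating by an isometry carries $\Quad(\H)$, $\Vertex(\H)$, and $\V(\H)$ equivariantly, it suffices to verify the identity for the standard halfspace $\H = \H(\origin,\vs)$ with $\vs = (1,0,0)^{\mathsf T}$, where $\Ss = \{a = 0\}$ and $\H$ is given in octant notation by $(+,+,\pm)\cup(0,+,-)\cup(-,\pm,-)$. First I would compute $\Quad(\H)$ directly: intersecting $\H$ with $\{a=0\}$ gives $(0,+,-)$, whose closure is the quadrant $\{a = 0,\ b\ge 0,\ c\le 0\} = \R^+\xm{\vs} + \R^+(-\xp{\vs})$. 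So the left side is an explicit quadrant in the stem plane through the origin, and since $p = \origin$ here, the claim becomes $\V(\H) = \{a=0,\ b\ge 0,\ c\le 0\}$.

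The real content is thus the computation of the translational semigroup $\V(\H) = \{\vv \in \V \mid \H + \vv \subset \H\}$. I would prove the two inclusions separately. For $\supseteq$: given $\vv = b_0\xm{\vs} - c_0\xp{\vs}$ with $b_0, c_0 \ge 0$ (i.e.\ first coordinate $0$, second $\ge 0$, third $\le 0$), I must check that translating any of the three defining pieces of $\H$ by $\vv$ lands back in $\H$. Using the inner products $\ldot{\vv}{\vs} = 0$, $\ldot{\vv}{\xp{\vs}} = -b_0 \le 0$, $\ldot{\vv}{\xm{\vs}} = c_0 \le 0$ — wait, I should recompute these against the Gram matrix $\mathrm{diag}$-like form $\begin{bmatrix} 1 & 0 & 0 \\ 0 & 0 & -1 \\ 0 & -1 & 0\end{bmatrix}$, so $\ldot{\vv}{\xp{\vs}} = -b_0$ and $\ldot{\vv}{\xm{\vs}} = c_0$ — one sees that each of the defining inequalities $\ldot{(q-p)}{\xp{\vs}} < 0$, $\ldot{(q-p)}{\xm{\vs}} > 0$, $\ldot{(q-p)}{\vs} \gtrless 0$ is preserved or improved under $q \mapsto q + \vv$ because the relevant inner products of $\vv$ have the correct sign and $\ldot{\vv}{\vs} = 0$ keeps the ``$a$-coordinate'' strata intact. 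This is a short finite check, three cases matching the three clauses of \eqref{eq:HSDef}.

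For the reverse inclusion $\subseteq$, suppose $\vv = (a_0, b_0, c_0)$ satisfies $\H + \vv \subset \H$; I must show $a_0 = 0$, $b_0 \ge 0$, $c_0 \le 0$. The cleanest route is to test against well-chosen points of $\H$ near the boundary strata and let them run to infinity. For instance, points of the form $(0, \epsilon, -M)$ for small $\epsilon > 0$ and large $M$ lie in $\H$; requiring $(a_0, \epsilon + b_0, -M + c_0) \in \H$ for all such $\epsilon, M$ forces $a_0$ to be $0$ (otherwise for $a_0 > 0$ we would need $\epsilon + b_0 > 0$ always, pushing toward $b_0\ge 0$, but more decisively $a_0 \ne 0$ moves the point off the stem plane where the ``$b>0$'' or ``$c<0$'' constraint becomes active and fails as $\epsilon \to 0$ or $M \to \infty$ on the appropriate side). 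Then with $a_0 = 0$ fixed, testing points with $a$ slightly positive and $b$ slightly positive forces $b_0 \ge 0$, and symmetrically points with $a$ slightly negative and $c$ slightly negative force $c_0 \le 0$. I expect the main obstacle to be organizing these limiting arguments cleanly — picking, for each of the three constraints on $\vv$, a one-parameter family of test points in $\H$ whose translate exits $\H$ precisely when the constraint is violated — rather than any deep difficulty; once the model is fixed everything is elementary linear algebra with the Gram matrix. Finally, reassembling: $p + \V(\H) = \origin + \{a = 0, b\ge 0, c\le 0\} = \Quad(\H)$, and transitivity propagates the identity to every crooked halfspace.
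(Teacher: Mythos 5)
Your strategy is the same as the paper's: reduce to the standard halfspace in null frame coordinates, identify $\Quad(\H)$ with the closed quadrant $\{a=0,\ b\ge 0,\ c\le 0\}$ spanned by $\xm{\vs}$ and $-\xp{\vs}$, and prove the two inclusions $\V(\H)\supseteq$ and $\subseteq$ by the three-stratum check and by test points, respectively. The forward inclusion as you sketch it is exactly the paper's argument (the key point being $\ldot{\vv}{\vs}=0$, so the $a$-stratum is unchanged and each remaining inequality only improves).

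The one concrete weakness is in the reverse inclusion, at the step forcing $a_0=0$: the family $(0,\epsilon,-M)$ does not do the job. If $a_0>0$ and $b_0\ge 0$, the translate $(a_0,\ \epsilon+b_0,\ -M+c_0)$ lies in the stratum $a>0$, where the only requirement is $b>0$, and $\epsilon+b_0>0$ holds automatically; so no contradiction arises no matter how you send $\epsilon\to 0$ or $M\to\infty$. To kill $a_0>0$ you must use a point whose $b$-coordinate is very negative, and such points exist in $\H$ only in the stratum $a<0$: take $p=(-\delta,-N,-1)$ with $0<\delta<a_0$ and $N>b_0$, so that $p\in\H$ but $p+\vv$ lands in the stratum $a>0$ with $b$-coordinate $-N+b_0<0$. (The symmetric choice with $a$ slightly positive and $c$ very positive kills $a_0<0$, and once $a_0=0$ your tests for $b_0\ge 0$ and $c_0\le 0$ go through.) This is the same repair the paper's own converse performs; with that substitution your proof is complete and coincides with theirs.
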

\noindent
The calculations in the proof will show that the $\V(\H)$ has a particularly simple form:
\begin{cor}\label{cor:StemQuadrant}
Let $\vs\in\V$ be a unit-spacelike vector and $p\in\E$. Then
$\V\big( \H(\vs,p)\big)$ consists of nonnegative linear combinations of $\xm{\vs}$ and $-\xp{\vs}$.
\end{cor}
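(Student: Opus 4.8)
The plan is to prove Proposition~\ref{thm:StemQuadrant} together with Corollary~\ref{cor:StemQuadrant} simultaneously, by direct computation in the normalized null-frame coordinates $(a,b,c)$ of \S\ref{sec:NullFrames}, which is legitimate by the transitivity Lemma~\ref{lem:transitivity}: since $\IsE$ acts transitively on crooked halfspaces and clearly carries stem quadrants to stem quadrants and translational semigroups to (conjugates of) translational semigroups, it suffices to verify the identity for the standard halfspace $\H = \H(\origin,\vs)$ with $\vs = (1,0,0)^T$, $\xm{\vs}=(0,1,0)^T$, $\xp{\vs}=(0,0,1)^T$. For this $\H$ the vertex is the origin, so the claim $\Quad(\H)=p+\V(\H)$ becomes $\Quad(\H)=\V(\H)$, and the corollary asserts that both equal the set $\{\,\beta\,\xm{\vs} - \gamma\,\xp{\vs} \mid \beta,\gamma\ge 0\,\} = \{(0,b,c) \mid b\ge 0,\ c\le 0\}$.

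The computation of $\Quad(\H)$ is immediate from the explicit description already in the text: the stem plane $\Ss(\H)$ is $\{a=0\}$, and intersecting with the open halfspace $\H = (+,+,\pm)\cup(0,+,-)\cup(-,\pm,-)$ picks out exactly the component $(0,+,-)$, i.e.\ $\{a=0,\ b>0,\ c<0\}$; taking the closure gives $\{a=0,\ b\ge 0,\ c\le 0\}$, which is precisely $\{\beta\xm{\vs}-\gamma\xp{\vs}\mid\beta,\gamma\ge0\}$. So the real content is the identity $\V(\H) = \{a=0,\ b\ge0,\ c\le0\}$, and the proof splits into the two inclusions. For the inclusion $\{a=0,b\ge0,c\le0\}\subset\V(\H)$, I would take $\vv = (0,\beta,-\gamma)$ with $\beta,\gamma\ge0$ and check, using the octant description $\H = (+,+,\pm)\cup(0,+,-)\cup(-,\pm,-)$, that $\H+\vv\subset\H$: if $q=(a,b,c)\in\H$ then $q+\vv = (a,b+\beta,c-\gamma)$, and one verifies case by case on the sign of $a$ that the defining inequalities are preserved — translating $b$ upward and $c$ downward only helps the inequalities $b>0$ (when $a>0$), $b>0>c$ (when $a=0$), $c<0$ (when $a<0$), and since $a$ is unchanged the octant one lands in is governed by the same sign of $a$. (A marginally cleaner phrasing: the inequalities defining $\H$ are of the form $\ldot{q}{\xp{\vs}}<0$, $\ldot{q}{\xm{\vs}}>0$, $\ldot{q}{\vs}$ with a fixed sign; since $\ldot{\vv}{\xp{\vs}} = -\gamma\cdot\ldot{\xp{\vs}}{\xp{\vs}} + \beta\cdot\ldot{\xm{\vs}}{\xp{\vs}} = -\beta \le 0$, $\ldot{\vv}{\xm{\vs}} = \beta\cdot 0 - \gamma(-1) = \gamma\ge0$, and $\ldot{\vv}{\vs}=0$, adding $\vv$ moves $\ldot{\cdot}{\xp{\vs}}$ down, moves $\ldot{\cdot}{\xm{\vs}}$ up, and leaves $\ldot{\cdot}{\vs}$ fixed, so membership in $\H$ is preserved.)

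The reverse inclusion $\V(\H)\subset\{a=0,b\ge0,c\le0\}$ is where the main obstacle lies, since one must rule out every other translation. The strategy is contrapositive: suppose $\vv=(\alpha,\beta,\gamma)$ with $\vv\notin\{a=0,b\ge0,c\le0\}$, and exhibit a point $q\in\H$ with $q+\vv\notin\H$. If $\alpha\neq0$, say $\alpha>0$ (the case $\alpha<0$ is symmetric under the reflection $\rho$ of \eqref{eq:rho}, which negates $a$ and swaps $b\leftrightarrow c$, hence preserves $\H$ up to the symmetry exchanging the two defining octant-pieces): take a point $q=(a,b,c)\in\H$ with $a$ a small positive number and $a+\alpha$ still positive but $b$ chosen so that $b$ stays positive while we also select a nearby point with $a$ slightly negative — more carefully, pick $q$ in the piece $(-,\pm,-)$ with $a<0<a+\alpha$, i.e.\ $a\in(-\alpha,0)$, and $c<0$ arbitrary and $b$ anything; then $q+\vv$ has first coordinate $a+\alpha>0$, so to lie in $\H$ it needs $b+\beta>0$, which fails if we take $b$ sufficiently negative. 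Thus $\alpha>0$ forces failure, and symmetrically $\alpha<0$; so $\alpha=0$. Given $\alpha=0$: if $\beta<0$, take $q$ in the piece $(+,+,\pm)$ with $b$ a small positive number less than $-\beta$ and $a>0$; then $q+\vv=(a, b+\beta, c)$ has $a>0$ but $b+\beta<0$, so $q+\vv\notin\H$. If $\gamma>0$, take $q\in(-,\pm,-)$ with $c\in(-\gamma,0)$; then $q+\vv$ has $a<0$ and $c+\gamma>0$, so $q+\vv\notin\H$. This exhausts the cases and completes the proof; the only delicate point is choosing the test points $q$ to lie in the correct octant-piece of $\H$ while straddling a coordinate hyperplane, which is routine once one keeps the octant description $(+,+,\pm)\cup(0,+,-)\cup(-,\pm,-)$ in view.
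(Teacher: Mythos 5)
Your proof is correct and follows essentially the same route as the paper: work in the normalized null-frame coordinates for the standard halfspace, identify $\Quad(\H)=\overline{(0,+,-)}=\{a=0,\ b\ge 0,\ c\le 0\}$, verify the inclusion $\{a=0,\ b\ge0,\ c\le0\}\subset\V(\H)$ by a case check on the sign of $a$, and establish the reverse inclusion by exhibiting test points $q\in\H$ with $q+\vv\notin\H$ whenever $\alpha\neq0$, $\beta<0$, or $\gamma>0$. If anything, your choice of test points (e.g.\ taking $a\in(-\alpha,0)$ so that the first coordinate changes sign) is stated more carefully than the paper's own converse step, which contains a couple of sign slips in the analogous choices.
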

\begin{proof}
Write the stem quadrant in the usual coordinates: 
$$
\Quad(\H) = \overline{(0,+,-)} = \{ c \le 0 = a \le b \}.
$$
A vector $\vv = (\alpha,\beta,\gamma)\in\V$ satisfies
$p + \vv\in\Quad(\H)$  if and only if:
\begin{equation}\label{eq:vInStemQuadrant}
\gamma \le 0 = \alpha \le \beta.
\end{equation}

We first show that if $p + \vv\in\Quad(\H)$, then $\vv\in\V(\H)$.
Suppose the coordinates $\alpha, \beta,\gamma$ of $\vv$ satisfy
\eqref{eq:vInStemQuadrant} and let $p = (a,b,c)\in\H$.
\begin{itemize}
\item If $a > 0$, then $\alpha + a = a > 0$ and $\beta + b > 0$.
\item If $a = 0$, then $\alpha + a = 0$ and $\beta + b > 0$, as well as
$\gamma + c < 0$.
\item If $a < 0$, then $\alpha + a = a < 0$ and $\gamma + c < 0$.
\end{itemize}
Thus $p +\vv \in \H$ as desired.

Conversely, suppose that $\vv\in\V(\H)$. 
Suppose that $\alpha > 0$. Choose $a  < -\alpha$ and $b < -\vert\beta\vert$.
Then $p=(a,b,c) \in \H$ but $\vv + p \notin\H$, a contradiction.
If $\alpha < 0$, then taking $a > -\alpha$ and $c > \vert\gamma\vert$ leads
to a contradiction. Thus $\alpha = 0$.

We next prove that $\beta\le 0$.
Otherwise $\beta > 0$ and taking $p=(0,b,c)$ where $c < -\gamma$ and $b > 0$
yields a contradiction. Similarly $\beta \ge 0$.
Thus  \eqref{eq:vInStemQuadrant} holds, proving $p + \vv \in\Quad(\H)$  as desired.
\end{proof}

\begin{prop} \label{prop:complement}
Let $\vs\in\V$ be a unit-spacelike vector and $p\in\E$. 
Then the complementary  open halfspace 
$\cH(p,\vs)^c$ equals $\H(p,-\vs)$ and
$$
\Quad\big(\cH(p,\vs)^c\big) = -\Quad\big(\H(p,\vs)\big).
$$
\end{prop}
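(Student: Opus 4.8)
The plan is to reduce everything to the normal form already in hand from Lemma~\ref{lem:transitivity} and the explicit coordinate description of a crooked halfspace. Since $\IsE$ acts transitively on crooked halfspaces, and an isometry carrying $\H(p,\vs)$ to a standard model carries $\cH(p,\vs)^c$ and $\H(p,-\vs)$ equivariantly, it suffices to verify both assertions for the base example with $\vs = (1,0,0)^{T}$ and $p$ the origin, where $\H = (+,+,\pm)\cup(0,+,-)\cup(-,\pm,-)$ and the explicit defining inequalities for $\H$ and $\cH$ are recorded in \S\ref{sec:transformations}. (Strictly, one should note that the map $\vs \mapsto -\vs$ is compatible with this reduction because the isometry $\rho$ of \eqref{eq:rho} reverses $\vs$; I would state this at the outset so that proving the identity in the base case is genuinely enough.)

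First I would prove the set equality $\cH(p,\vs)^c = \H(p,-\vs)$. In the base coordinates, $\cH$ is defined by ``$b\ge 0$ if $a>0$; $b\ge 0$ or $c\le 0$ if $a=0$; $c\le 0$ if $a<0$.'' Negating these conditions and distributing the negation across the case split gives: ``$b<0$ if $a>0$; $b<0$ and $c>0$ if $a=0$; $c>0$ if $a<0$,'' which is exactly the defining system for $\H(p,-\vs)$ obtained from \eqref{eq:HSDef} by sending $\vs\mapsto-\vs$, hence $\xm{\vs}\leftrightarrow -\xp{\vs}$ up to the relevant sign conventions — equivalently, it is the octant union $(+,-,\pm)\cup(0,-,+)\cup(-,\pm,+)$. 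This is a short finite bookkeeping check; the only subtlety is keeping straight that complementing a \emph{closed} halfspace yields an \emph{open} one, which matches the open-halfspace conventions of \S\ref{sec:transformations} and is why the statement pairs $\cH(p,\vs)^c$ with $\H(p,-\vs)$ rather than with a closed object.

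Next I would compute the two stem quadrants. By Corollary~\ref{cor:StemQuadrant}, $\V\big(\H(p,\vs)\big)$ is the set of nonnegative combinations of $\xm{\vs}$ and $-\xp{\vs}$, so by Proposition~\ref{thm:StemQuadrant}, $\Quad\big(\H(p,\vs)\big) = p + \V\big(\H(p,\vs)\big)$ is the quadrant spanned at $p$ by $\xm{\vs}$ and $-\xp{\vs}$. Applying the same corollary to $\H(p,-\vs)$: the null directions associated to $-\vs$ are $\xpm{(-\vs)}$, and the identity \eqref{eq:PlusMinus} together with the orientation/time-orientation conventions of \S\ref{sec:NullFrames} identifies $\xm{(-\vs)}$ with a positive multiple of $\xp{\vs}$ and $\xp{(-\vs)}$ with a positive multiple of $\xm{\vs}$. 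Hence $\V\big(\H(p,-\vs)\big)$ is the nonnegative span of $\xp{\vs}$ and $-\xm{\vs}$, so $\Quad\big(\H(p,-\vs)\big) = p + \big(\text{nonneg span of } \xp{\vs}, -\xm{\vs}\big)$. Since $\cH(p,\vs)^c = \H(p,-\vs)$ from the first step, and negating a quadrant spanned by $\xm{\vs}, -\xp{\vs}$ (based at $p$) gives the quadrant spanned by $-\xm{\vs}, \xp{\vs}$ (based at $p$, because $-(p + C) = p + (-C)$ when we measure relative to the vertex), the two sides agree: $\Quad\big(\cH(p,\vs)^c\big) = -\Quad\big(\H(p,\vs)\big)$, where the minus sign is understood as negation in the vector space $\V$ of the translational semigroup, i.e.\ at the common vertex $p$.

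The main obstacle — more a bookkeeping hazard than a real difficulty — is the swap $\xm{\vs}\leftrightarrow\xp{\vs}$ under $\vs\mapsto-\vs$ and making sure the sign conventions (future-pointing normalization, $\xm{\vs}\cdot\xp{\vs}=-1$, positive orientation of the null frame) are applied consistently so that the ``$-$'' in the statement really is the linear antipode in $\V$ acting on the semigroup, not something based at a different point. I would handle this once and for all by fixing the base model, doing the octant-notation computation there, and then invoking transitivity; that way the only place signs must be tracked carefully is the single explicit example, where the coordinates $(a,b,c)$ make every inequality mechanical.
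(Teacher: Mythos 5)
The paper states Proposition~\ref{prop:complement} without any proof, treating it as immediate from the explicit octant description of $\H$ and $\cH$ and from Proposition~\ref{thm:StemQuadrant} together with Corollary~\ref{cor:StemQuadrant}; your argument is precisely the verification the authors leave implicit, and it is correct, including the key bookkeeping point that passing to $-\vs$ swaps $\xm{\vs}$ and $\xp{\vs}$ (up to positive scale) so that $\V\big(\H(p,-\vs)\big)=-\V\big(\H(p,\vs)\big)$. Your explicit remark that the minus sign in $-\Quad$ must be read as negation of the translational semigroup based at the vertex $p$ is a sensible clarification of an ambiguity the paper itself does not resolve.
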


\subsection{Linearization of crooked halfspaces}

Recall that in \S\ref{sec:hyperbolichalfplanes} we associated every spacelike vector in $\V$ to a halfplane in $\Ht$.  Given $\vs\in\V$ spacelike 
and $p\in\E$, we define the {\em linearization} of $\H(p,\vs)$ to be:
\begin{equation*}
\L(\H(p,\vs))=\h(\vs).
\end{equation*}



\noindent
We first show that linearization commutes with complement:
\begin{cor}\label{cor:complement} 
The correspondence $\L$ respects the involution:\newline
Suppose $\H\subset\E$ is a crooked halfspace with complementary 
halfspace $\H^c$. Then the linearization $\L(\H^c)$ is the halfplane
in $\Ht$ complementary to $\L(\H)$. 
\end{cor}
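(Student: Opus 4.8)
The plan is to reduce everything to the explicit model $\H = \H(p,\vs)$ and then quote Proposition~\ref{prop:complement} together with the elementary description of halfplanes in \S\ref{sec:hyperbolichalfplanes}. First I would write the crooked halfspace as $\H = \H(p,\vs)$, where $p = \Vertex(\H)$ and $\vs$ is the unit-spacelike director. By Proposition~\ref{prop:complement}, the complementary halfspace $\H^c$ is, up to the usual passage between an open crooked halfspace and its closure, the crooked halfspace with the \emph{same} vertex $p$ and director $-\vs$; concretely $\H^c = \cH(p,-\vs)$. Applying the definition of linearization then gives $\L(\H) = \h(\vs)$ and $\L(\H^c) = \h(-\vs)$, so the corollary reduces to the purely hyperbolic-geometric assertion that $\h(-\vs)$ is the halfplane in $\Ht$ complementary to $\h(\vs)$.

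Second, I would verify this last assertion directly from the definition $\h(\vs) = \{\,\vv \in \Htf \mid \vv\cdot\vs \geq 0\,\}$. The halfplanes $\h(\vs)$ and $\h(-\vs)$ share the common boundary geodesic $\P(\vs^\perp)\cap\Ht = \{\,\vv \in \Htf \mid \vv\cdot\vs = 0\,\}$; their union is all of $\Htf \cong \Ht$; and their interiors $\{\vv\cdot\vs > 0\}$ and $\{\vv\cdot\vs < 0\}$ are disjoint and together exhaust the complement of that geodesic. This is precisely the observation recalled in \S\ref{sec:hyperbolichalfplanes}, that the complement of $\P(\vs^\perp)$ in $\Ht$ is the union of the interiors of $\h(\vs)$ and $\h(-\vs)$. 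Hence $\h(-\vs)$ is, by definition, the halfplane complementary to $\h(\vs)$, and the corollary follows.

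I expect the only point requiring care — a bookkeeping issue rather than a genuine obstacle — to be the open/closed distinction: $\H$ is open while its topological complement $\H^c$ is closed, so one must check that linearization is insensitive to this, i.e. that $\L$ applied to a crooked halfspace and to its closure yield the same halfplane $\h(\vs)$. This is consistent with the conventions already fixed in \S\ref{sec:hyperbolichalfplanes}: the boundary geodesic $\partial\h(\vs) = \P(\vs^\perp)\cap\Ht$ corresponds exactly to the timelike directions lying in the stem plane $\Ss(\H)$, which is common to $\H$ and $\H^c$, so moving between the open and closed halfspace affects only directions on $\partial\h(\vs)$ and not which halfplane is produced. With that understood, the argument above is complete, and no computation beyond Proposition~\ref{prop:complement} and the definitions is needed.
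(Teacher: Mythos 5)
Your proof is correct, but it takes a genuinely different route from the paper's. The paper argues by symmetry: the spine reflection $R$ of \eqref{eq:spine} interchanges $\H$ and $\H^c$, and its linearization $\L(R)$ is the reflection of $\Ht$ in the geodesic $\partial\L(\H)$, so it carries the particles of $\H$ to those of $\H^c$ and hence carries $\L(\H)$ to the complementary halfplane. You instead compute directly: Proposition~\ref{prop:complement} identifies $\H^c$ with $\cH(p,-\vs)$, the definition of linearization gives $\L(\H(p,\pm\vs)) = \h(\pm\vs)$, and \S\ref{sec:hyperbolichalfplanes} already records that $\h(\vs)$ and $\h(-\vs)$ are the two halfplanes bounded by $\P(\vs^\perp)$. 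Since the paper \emph{defines} $\L$ by the explicit formula $\L(\H(p,\vs)) = \h(\vs)$, your argument is the more economical one and is essentially immediate from the definitions; what the paper's reflection argument buys is that it works with the intrinsic characterization of $\L(\H)$ as the set of directions of particles contained in $\H$ (only fully justified by the theorem in \S\ref{sec:Particles}), and it exhibits the complementation involution as induced by an actual isometry of $\E$, previewing the equivariance of $\L$. Your attention to the open/closed bookkeeping is appropriate and resolves correctly: passing between $\H$ and $\cH$ changes only directions lying on the boundary geodesic $\partial\h(\vs)$, so it does not affect which halfplane is produced.
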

\begin{proof}
The spine reflection $R$ defined in \eqref{eq:spine}, \S\ref{sec:spine}
interchanges $\H$ and $\H^c$. Furthermore its linearization $\L(R)$ is
a reflection in $\partial\L(\H)$ which interchanges the particles in 
$\H$ and $\H^c$.  Therefore $\L(\H^c)$ and $\L(\H)$ are complementary
halfplanes in $\Ht$ as claimed.
\end{proof}

\noindent
Next we deduce that linearization preserves the relation of inclusion
of halfspaces.
\begin{cor}\label{cor:order-preserving}
The correspondence $\L$ respects the partial ordering:\newline
Suppose that $\H_1,\H_2\subset\E$ are crooked halfspaces,
with linearizations $\L(\H_1),\L(\H_2)\subset\Ht$. Then:
\[
\H_1 \subset \H_2\; \Longrightarrow \L(\H_1) \subset \L(\H_2).
\]
\end{cor}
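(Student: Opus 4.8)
By definition $\L\big(\H(p,\vs)\big)=\h(\vs)$, so what must be shown is that $\H(p_1,\vs_1)\subseteq\H(p_2,\vs_2)$ forces $\h(\vs_1)\subseteq\h(\vs_2)$. The conceptually cleanest route is through the intrinsic description of $\h(\vs)$ as the set of parallelism classes of particles lying entirely inside $\H(p,\vs)$ --- the substance of \S\ref{sec:Particles}: granting it, a particle contained in $\H_1$ is also contained in $\H_2$, and the corollary follows in one line. Since at this point we have only the defining formula for $\L$, the plan instead is to establish directly the two implications that this shortcut packages, working each time in the null-frame coordinates $(a,b,c)$ of \S\ref{sec:NullFrames} adapted to the halfspace under discussion. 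Recall that in the coordinates built from the null frame of $\vs$ and centered at the vertex $p$, the halfspace $\H(p,\vs)$ takes its standard form $(+,+,\pm)\cup(0,+,-)\cup(-,\pm,-)$ (the coordinate description of \eqref{eq:HSDef}), with $\vs=(1,0,0)$, $\xm{\vs}=(0,1,0)$, $\xp{\vs}=(0,0,1)$, metric $da^2-db\,dc$, and with $\vv\cdot\vs$ equal to the $a$-coordinate of $\vv$.

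I would first establish that every \emph{interior} direction of $\h(\vs)$ is carried by a complete particle inside $\H(p,\vs)$: if $\vv$ is future-pointing timelike with $\vv\cdot\vs>0$ --- equivalently, $[\vv]$ lies in the interior of $\h(\vs)$ --- then the particle $\ell_{\vv}:=\big(p+\xm{\vs}-\xp{\vs}\big)+\R\vv$ lies in $\H(p,\vs)$. In the adapted coordinates its basepoint is $(0,1,-1)$ and $\vv=(\alpha,\beta,\gamma)$ with $\alpha=\vv\cdot\vs>0$ and $\beta,\gamma>0$; evaluating $\ell_{\vv}$ at parameter $t$ gives $(\alpha t,\,1+\beta t,\,-1+\gamma t)$, which a short check places in the octant $(+,+,\pm)$ for $t>0$, in $(0,+,-)$ for $t=0$, and in $(-,\pm,-)$ for $t<0$ --- in every case inside $\H(p,\vs)$. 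This is the crooked-halfspace analogue of the statement that every point of an open crooked halfspace lies on a particle, and the explicit choice of basepoint is exactly what makes it painless.

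Then I would establish the converse: if a particle with future-pointing timelike direction $\vv$ lies in $\H(p,\vs)$, then $\vv\cdot\vs>0$, so $[\vv]\in\h(\vs)$. In the coordinates adapted to $(p,\vs)$, writing $\vv=(\alpha,\beta,\gamma)$ with $\beta,\gamma>0$ and $\alpha=\vv\cdot\vs$, this follows from a case analysis on the sign of $\alpha$: since $\beta,\gamma>0$, along the line the coordinate $b$ is unbounded below and $c$ unbounded above, while the defining inequalities require $b>0$ on $\{a\ge 0\}$ and $c<0$ on $\{a\le 0\}$; when $\alpha\le 0$ the line either meets $\{a\ge 0\}$ for all sufficiently negative $t$, or lies entirely in $\{a\ge 0\}$, or lies entirely in $\{a\le 0\}$, and in each case one of these inequalities is eventually violated --- so necessarily $\alpha>0$. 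Combining the two: if $[\vv]$ is interior to $\L(\H_1)=\h(\vs_1)$, the first implication applied to $\H_1$ yields a particle $\ell_{\vv}\subseteq\H_1\subseteq\H_2$ of direction $\vv$, and then the second implication applied to $\H_2$ gives $[\vv]\in\h(\vs_2)=\L(\H_2)$. Hence the interior of $\L(\H_1)$ is contained in $\L(\H_2)$; since $\L(\H_2)$ is a closed halfplane, it also contains the closure of that interior, which is all of $\L(\H_1)$, so $\L(\H_1)\subseteq\L(\H_2)$.

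The only step carrying real content is the first implication --- exhibiting a complete particle inside an \emph{open} crooked halfspace in a prescribed interior direction --- and even that is routine once the basepoint $p+\xm{\vs}-\xp{\vs}$ is written down (it lies in the interior of the stem quadrant $\Quad\big(\H(p,\vs)\big)$). Two slicker but less self-contained alternatives are worth noting. Once \S\ref{sec:Particles} is available, one simply cites that $\L(\H)$ is the set of parallelism classes of particles contained in $\H$, and the corollary is then immediate as in the first paragraph. Alternatively, one can route through Corollary~\ref{cor:complement}: because $\H_1\subseteq\H_2$ if and only if $\H_1\cap\H_2^{c}=\emptyset$, and $\L$ respects complementation, it would essentially suffice to know that $\L$ carries disjoint crooked halfspaces to disjoint halfplanes --- which, by Lemma~\ref{lem:consistentorientation}, reduces to verifying that the directors of two disjoint crooked halfspaces are consistently oriented.
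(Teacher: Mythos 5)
Your proof is correct and follows essentially the same route as the paper's: the paper's argument is exactly the one-line shortcut you describe at the outset (a direction in $\L(\H_1)$ is realized by a particle contained in $\H_1\subset\H_2$, hence lies in $\L(\H_2)$), relying on the particle characterization of $\L(\H)$ that the paper only establishes later, in \S\ref{sec:Particles}. Your version is more self-contained and in fact slightly more careful at the boundary: for $\vt\in\partial\L(\H_1)$ no particle parallel to $\vt$ lies in the \emph{open} halfspace $\H_1$ (only in its closure), so the paper's step ``there exists a particle $\ell\subset\H_1$ parallel to $\vt$'' needs exactly the interior-then-closure repair you supply.
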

\begin{proof}
Let $\vt\in\L(\H_1)$. Then there exists a particle $\ell$ parallel to $\vt$ 
such that $\ell\subset\H_1$. Since $\H_1\subset\H_2$, the particle
$\ell$ lies in $\H_2$. Thus $\vt\in\L(\H_2)$ as claimed.
\end{proof}
\noindent
Clearly $\L(\H_1) \subset \L(\H_2)$
does {\em not\/} in general imply that $\H_1\subset\H_2$.

\begin{cor}\label{cor:disjointness}
Suppose $\H_1,\H_2$ are disjoint crooked halfspaces.
Then their linearizations $\L(\H_1), \L(\H_2)$ are disjoint halfplanes in $\Ht$.
\end{cor}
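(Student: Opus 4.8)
The plan is to reduce Corollary~\ref{cor:disjointness} to Lemma~\ref{lem:consistentorientation} by way of Lemma~\ref{lem:consistentorientation}'s input, the notion of consistent orientation, together with the characterization of $\V(\H)$ in Corollary~\ref{cor:StemQuadrant}. First I would normalize using Lemma~\ref{lem:transitivity}: after applying an orientation-preserving isometry we may assume $\H_1 = \H(p_1,\vs_1)$ and $\H_2 = \H(p_2,\vs_2)$ for unit-spacelike directors $\vs_1,\vs_2$, and $\L(\H_i) = \h(\vs_i)$ by the definition of linearization. So the claim becomes: if the crooked halfspaces $\H(p_1,\vs_1)$ and $\H(p_2,\vs_2)$ are disjoint, then the hyperbolic halfplanes $\h(\vs_1)$ and $\h(\vs_2)$ are disjoint. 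By Lemma~\ref{lem:consistentorientation} it suffices to prove that $\vs_1$ and $\vs_2$ are consistently oriented, i.e. that $\vs_1\cdot\vs_2 < 0$, $\vs_1\cdot\vs_2^\pm \le 0$, and $\vs_1^\pm\cdot\vs_2 \le 0$.

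The key observation is that disjointness of the crooked halfspaces forces the stem quadrants to be ``compatibly placed''. Concretely, $\H_1 \cap \H_2 = \emptyset$ implies $\H_1 + \V(\H_2)$ stays disjoint from... no — the cleaner route is: since $\H_1$ and $\H_2$ are disjoint and each $\H_i$ contains translates of its stem quadrant cone based at its vertex, the direction cones themselves must point ``away'' from each other. More precisely, by Proposition~\ref{thm:StemQuadrant} and Corollary~\ref{cor:StemQuadrant}, $\V(\H_2)$ consists of nonnegative combinations of $\vs_2^-$ and $-\vs_2^+$, and $\Quad(\H_2) = p_2 + \V(\H_2) \subset \cH_2$. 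If $\H_1 \cap \H_2 = \emptyset$, then in particular $\H_1$ cannot meet the closed ray $p_2 + \R_{\ge 0}\vs_2^-$ nor $p_2 + \R_{\ge 0}(-\vs_2^+)$; pushing these rays off to infinity inside $\cH_2$ and using that $\H_1$ is an open region asymptotically governed by the signs of $(q-p_1)\cdot\vs_1$, $(q-p_1)\cdot\vs_1^\pm$, one extracts the sign inequalities $\vs_1\cdot\vs_2^- \le 0$ and $\vs_1\cdot(-\vs_2^+)\ge 0$, i.e. $\vs_1\cdot\vs_2^+ \le 0$. By the symmetric argument swapping the roles of the two halfspaces (and using the explicit inequality description of $\H$ in octant coordinates), $\vs_1^\pm\cdot\vs_2 \le 0$. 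Finally $\vs_1\cdot\vs_2 < 0$: if $\vs_1\cdot\vs_2 \ge 0$, then the interiors of the solid quadrants $\{a\vs_i + b\vs_i^- + c\vs_i^+ \mid b,c>0\}$ — which by the proof of Lemma~\ref{lem:consistentorientation} contain all future timelike vectors — would overlap in a way producing a common timelike direction lying in both $\h(\vs_1)$ and $\h(\vs_2)$, and then (using that every point of an open crooked halfspace lies on a particle, as asserted in the introduction, or more directly a translate argument) $\H_1$ and $\H_2$ would share a particle, contradicting disjointness.

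The honest shortcut, which I would actually write, is this. The disjointness criterion for crooked halfspaces (the third boxed theorem in the introduction) states $\H_1 \cap \H_2 = \emptyset$ iff $\Vertex(\H_1) - \Vertex(\H_2) \in \V(\H_1) - \V(\H_2)$; combined with the classical Drumm--Goldman disjointness result this is known to be equivalent to $\vs_1$ and $\vs_2$ being consistently oriented (this is precisely the content recalled before Lemma~\ref{lem:consistentorientation}, and is the translation-invariant heart of~\cite{DrummGoldman}). Granting that, the corollary is immediate: $\H_1 \cap \H_2 = \emptyset$ gives $\vs_1,\vs_2$ consistently oriented, hence by Lemma~\ref{lem:consistentorientation} the halfplanes $\h(\vs_1) = \L(\H_1)$ and $\h(\vs_2) = \L(\H_2)$ are disjoint.

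The main obstacle is the very last inequality, $\vs_1\cdot\vs_2 < 0$ (strictness), and more broadly making the passage ``disjoint crooked halfspaces $\Rightarrow$ consistently oriented directors'' rigorous without simply quoting~\cite{DrummGoldman}. The sign conditions on $\vs_i\cdot\vs_j^\pm$ are the ``easy'' part — they come from looking at how the wings and hinges of one halfspace sit relative to the other — but ruling out the case where the geodesics $\partial\h(\vs_1), \partial\h(\vs_2)$ coincide or cross while the crooked halfspaces still happen to be disjoint requires care: one must verify that two crooked halfspaces whose directors have $\vs_1\cdot\vs_2 \ge 0$ genuinely intersect, which is exactly where the structure of the stem (containing all timelike directions of the stem plane) and the ``every point lies on a particle'' principle do the work. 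If one prefers to avoid reproving~\cite{DrummGoldman}, the clean writeup is the three-line argument in the previous paragraph, citing the disjointness theorem and Lemma~\ref{lem:consistentorientation}.
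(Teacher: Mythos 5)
Your argument has a genuine gap, and in one version a circularity. The route you propose is ``disjoint crooked halfspaces $\Rightarrow$ consistently oriented directors $\Rightarrow$ disjoint halfplanes via Lemma~\ref{lem:consistentorientation}.'' The second arrow is fine, but the first is exactly the hard content, and neither of your two attempts establishes it. The direct geometric attempt extracts only the inequalities $\ldot{\vs_1}{\xpm{\vs_2}}\le 0$ and $\ldot{\xpm{\vs_1}}{\vs_2}\le 0$ by a sketch (``pushing rays off to infinity''), and you concede that the strict inequality $\ldot{\vs_1}{\vs_2}<0$ --- i.e.\ ruling out coincident or crossing boundary geodesics --- is left open; that is precisely the case one must exclude, so the proof is incomplete as written. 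The ``honest shortcut'' is worse than incomplete: it is circular within this paper. Theorem~\ref{thm:disjointness} takes consistent orientation of $\vs_1,\vs_2$ as a \emph{hypothesis}, and the paper arrives at that hypothesis at the start of the disjointness section by invoking Corollary~\ref{cor:disjointness} itself (disjoint halfspaces have disjoint linearizations, hence by Lemma~\ref{lem:consistentorientation} consistently oriented directors). So you cannot quote the disjointness criterion to prove this corollary without importing the full case analysis of \cite{DrummGoldman} from outside the paper.

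The paper's own proof is a two-line soft argument that bypasses consistent orientation entirely: since $\H_1$ is open and disjoint from $\H_2$, it lies in the complementary halfspace $\H_2^c$; Corollary~\ref{cor:order-preserving} (linearization preserves inclusion, proved by noting that a particle in $\H_1$ is a particle in $\H_2^c$) gives $\L(\H_1)\subset\L(\H_2^c)$, and Corollary~\ref{cor:complement} (linearization commutes with complement, proved using the spine reflection) gives $\L(\H_2^c)=\L(\H_2)^c$, whence $\L(\H_1)$ and $\L(\H_2)$ are disjoint. You did not use either of these two corollaries, which immediately precede this statement and were set up for exactly this purpose. If you want to salvage your approach, you would in effect be reproving the wing/stem intersection analysis of \cite{DrummGoldman}; the monotonicity-plus-involution argument is both shorter and the intended one.
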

\begin{proof}
Combine Corollary~\ref{cor:complement} and Corollary~\ref{cor:order-preserving}.
\end{proof}

\section{Symmetry}\label{sec:Symmetry}
In this section we determine various automorphism groups and endomorphism
semigroups of a crooked halfspace and the corresponding  orbit structure.

We begin by decomposing a halfspace into pieces, which will be invariant
under the affine transformations. From that we specialize to conformal
automorphisms, and finally isometries.

\subsection{Decomposing a halfspace}
The open halfspace $\H=\H(p, \vs)$  
naturally divides into three subsets,
the stem quadrant, defined by $a=0$ in null frame coordinates, 
and two {\em solid quadrants,\/}
defined by $a < 0$ and $a > 0$.
Recall that a
{\em solid quadrant\/} in a 
$3$-dimensional affine space is defined as the intersection of two ordinary
(parallel, that is, ``non-crooked'') halfspaces. 
Equivalently a solid quadrant is a connected component of the complement of the union
of two transverse planes in $\E$.

%

\subsection{Affine automorphisms}
We first determine the group of affine automorphisms of $\H$. 

First, every automorphism $g$ of $\H$ must fix 
$p := \Vertex(\H)$
and preserve the hinges $h_- $, $h_+$.
The crooked plane  $\partial\H$ is smooth except
along $h_- \cup h_+$
so $g$ leaves this set invariant.
Furthermore this set is singular only at the vertex $p = h_- \cap h_+$, 

Since $\H$ is vertexed at $p$, 
the affine automorphism $g$ must be linear 
(where $p$ is identified with the zero element of $\V$, of course).

The involution:
\begin{equation*} 
\rho  \left ( \begin{bmatrix} a \\ b\\ c \end{bmatrix} \right ) =  
\begin{bmatrix} -a \\ -c\\ -b \end{bmatrix}  
\end{equation*}
defined in \eqref{eq:rho}
preserves $\H$  (and also $\cH^c$), but interchanges $h_-$ and $h_+$. 
The involution preserves the particle:
$$
a = b+ c =  0.
$$
Thus, either $g$ or $g\rho$ will preserve $h_+$ and $h_-$.
We henceforth assume that $g$ preserves each hinge.

The complement of $h_-$ in $\partial\H$ has two components, 
one of which is smooth and the other singular (along $h_+$).
The smooth component is the wing $\W_-$, which must be preserved by $g$.
Thus $g$ preserves each wing.

Each wing lies in a unique (null) plane, and these two null planes intersect
in the spine defined in \eqref{eq:spine} in \S\ref{sec:spine}, 
the tachyon through $o$ parallel to $\vs$. The spine 
is also preserved by $g$. Thus $g$ is represented
by a linear map preserving the coordinate lines for the null frame
$(\vs,\xm{\vs},\xp{\vs})$, and therefore represented by a diagonal matrix.
We have proved:

\begin{prop}\label{prop:AffineAutos}
The affine automorphism group of $\H$ equals the double extension
of the group of positive diagonal matrices by the order two cyclic group 
$\langle\rho\rangle$. It is the image of the embedding:
\begin{align*}
\R^3 \rtimes (\Z/2) &\xrightarrow{\cong} \Aff(\E) \\
\big((s,t,u), \epsilon\big) &\longmapsto
\rho^{\epsilon} 
e^s \bmatrix e^u & 0 & 0 \\ 0 & e^t & 0 \\   0 & 0 & e^{-t} \endbmatrix.
\end{align*}
 where $\epsilon \equiv 0,1 \mod{2}$.
\end{prop}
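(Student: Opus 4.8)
The plan is to work entirely in the null-frame coordinates $(a,b,c)$ fixed in \S\ref{sec:NullFrames}, where the standard halfspace $\H = \H(p,\vs)$ has vertex at the origin, hinges $h_-=(0,\pm,0)$, $h_+=(0,0,\pm)$, spine $(\pm,0,0)$, and is described by the octant-notation union $(+,+,\pm)\cup(0,+,-)\cup(-,\pm,-)$. The preceding discussion has already established that any affine automorphism $g$ of $\H$ must fix the vertex (hence is linear), must preserve the singular set $h_-\cup h_+$ of $\partial\H$, and — after possibly composing with $\rho$ so as to preserve each hinge individually rather than swap them — must preserve each wing $\W_\pm$, hence each of the three null-frame coordinate axes $\R\vs$, $\R\xm{\vs}$, $\R\xp{\vs}$. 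Therefore $g$ is represented by a diagonal matrix $\mathrm{diag}(\lambda,\mu,\nu)$ with respect to the basis $(\vs,\xm{\vs},\xp{\vs})$. What remains is to pin down exactly which diagonal matrices occur and to package them in the stated form $\rho^\epsilon\, e^s\,\mathrm{diag}(e^u,e^t,e^{-t})$.

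First I would record the orientation constraint: since we consider orientation-preserving affine automorphisms, $\det g = \lambda\mu\nu > 0$. Next I would extract the sign constraints on $\lambda,\mu,\nu$ from invariance of $\H$. A diagonal map $g=\mathrm{diag}(\lambda,\mu,\nu)$ sends the point $(a,b,c)$ to $(\lambda a,\mu b,\nu c)$; applying this to points of $\H$ described by the sign conditions above, one sees that $g(\H)=\H$ forces $\lambda,\mu,\nu$ all to have the same sign — e.g. using the $a>0$ stratum $\{b>0\}$ one needs $\mu>0$ and that $\lambda$ preserve the half-line $a>0$, so $\lambda>0$; symmetrically from the $a<0$ stratum $\nu>0$; and the middle stratum is then automatically preserved. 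Combined with $\det>0$ this gives $\lambda,\mu,\nu>0$, and conversely any positive diagonal matrix visibly preserves each of the three strata of $\H$, hence preserves $\H$. So among hinge-preserving automorphisms, the group is exactly the positive diagonal matrices $\cong (\Rplus)^3\cong\R^3$ (via $\mathrm{diag}(e^u\cdot e^s, e^t\cdot e^s, e^{-t}\cdot e^s)$, a change of coordinates on $\R^3$ that absorbs the overall homothety parameter $s$ and the two remaining degrees of freedom into $t,u$).

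Then I would handle the $\Z/2$ extension. The involution $\rho$ of \eqref{eq:rho} preserves $\H$ (this is asserted in the excerpt and is a one-line sign check on the strata, using that $\rho$ sends $(a,b,c)\mapsto(-a,-c,-b)$ and that it swaps the $a>0$ and $a<0$ strata while reversing $b\leftrightarrow c$), but it swaps $h_-$ and $h_+$, so it is not diagonal in the above sense and is not in the identity component of $\Aff(\H)$. Since every automorphism either preserves both hinges or swaps them, and $\rho$ realizes the swap, $\Aff(\H)$ is generated by the positive diagonal matrices together with $\rho$; as $\rho$ normalizes the diagonal subgroup (conjugation by $\rho$ sends $\mathrm{diag}(\lambda,\mu,\nu)$ to $\mathrm{diag}(\lambda,\nu,\mu)$, again a positive diagonal matrix) and $\rho^2=\Id$, we get the semidirect product $\R^3\rtimes\Z/2$ with $\rho$ acting by the involution $t\mapsto -t$, $u\mapsto u$, $s\mapsto s$ in the chosen parametrization. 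Writing a general element as $\rho^\epsilon\, e^s\,\mathrm{diag}(e^u,e^t,e^{-t})$ with $\epsilon\in\{0,1\}$ then gives precisely the claimed embedding into $\Aff(\E)$, and it is injective because the parameters $s,t,u$ are recovered from the eigenvalues and $\epsilon$ is detected by whether the hinges are swapped.

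The main obstacle — really the only place any care is needed — is the sign/orientation bookkeeping in the second step: making sure the invariance conditions on the three strata of $\H$ genuinely force all three diagonal entries positive (rather than, say, allowing an overall sign that $\det>0$ would still permit if the count were off), and checking that $\rho$ indeed preserves $\H$ with the correct orientation behavior. Everything else is formal: the reduction to diagonal matrices is already done in the text, and assembling the semidirect product and the explicit parametrization is routine once the identity component is identified.
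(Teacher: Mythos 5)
Your proposal is correct and follows the same route as the paper: the paper's proof is precisely the preceding discussion you cite (vertex fixed so $g$ is linear, hinges preserved up to composing with $\rho$, hence wings and spine preserved, hence $g$ diagonal in the null frame), after which it simply asserts the result. Your additional steps --- the stratum-by-stratum sign check forcing all three diagonal entries positive, the verification that $\rho$ normalizes the diagonal subgroup via $\mathrm{diag}(\lambda,\mu,\nu)\mapsto\mathrm{diag}(\lambda,\nu,\mu)$, and the injectivity of the parametrization --- are correct and fill in details the paper leaves implicit.
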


\begin{figure}[ht]
\centerline{\includegraphics{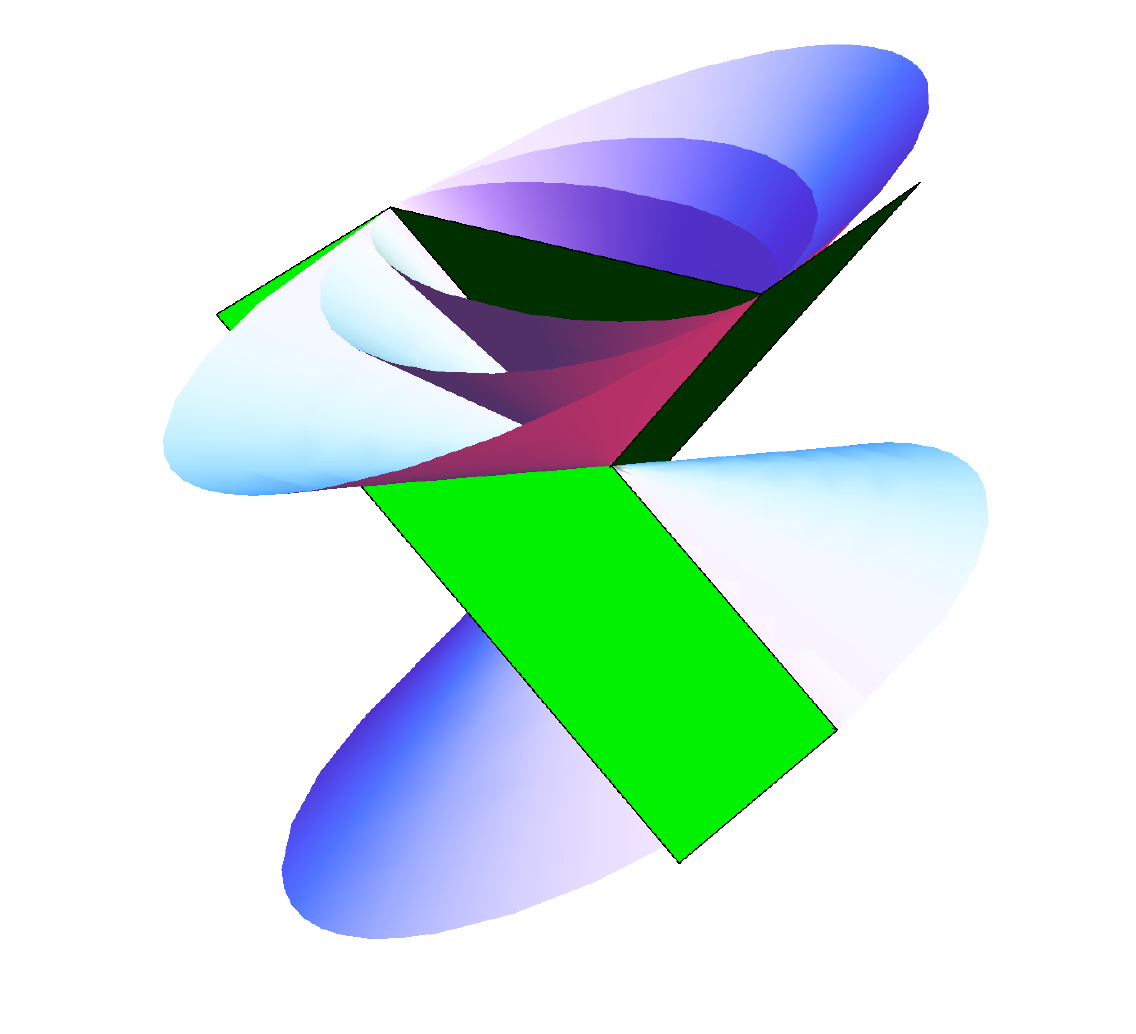}}
\caption{Affine automorphisms. This figure depicts a single crooked plane $C$,
and the lightcones for three affinely equivalent Lorentz structures in which 
$C$ is defined. For each of these Lorentz structures, a crooked 
halfspace complementary to $C$ meets the future in a region defining a halfplane in $\Ht$.}
\label{fig:AffineAutos}
\end{figure}

\subsection{Conformal automorphisms and isometries}
Lorentz isometries and homotheties generate the group
of conformal automorphisms of $\E$, that is the set of
{\em Lorentz similarity transformations\/}.
By \S\ref{sec:transformations} a conformal transformation 
(respectively isometry) is an affine automorphism $g$ whose linear part
$\L(g)$ lies in $\SOto \times \Rplus$ (respectively $\SOto$). 
By Proposition~\ref{prop:AffineAutos}, the linear part $\L(g)$ is a diagonal
matrix (in the null frame) so it suffices to check which diagonal matrices
act conformally (respectively isometrically). 


\begin{prop}\label{prop:ConfAutos}
Let $\H$ be a crooked halfspace.
\begin{itemize}
\item The group $\Conf(\H)$ of conformal automorphisms of $\H$ 
equals the double extension by the order two cyclic group  $\langle\rho\rangle$
of the subgroup of positive diagonal matrices generated by positive homotheties
and the one-parameter subgroup $\{\eta_t\mid t\in\R\}$ of boosts.
It is the image of the embedding:
\begin{align*}
\R^2 \rtimes (\Z/2) &\xrightarrow{\cong} \Conf(\H) \\
\big((s,t), \epsilon\big) &\longmapsto
\rho^{\epsilon}  e^s \eta_t 
\end{align*}
where $\epsilon \equiv 0,1 \mod{2}$.
\item The isometry group of $\H$ equals the double extension
of the one-parameter subgroup $\{\eta_t\mid t\in\R\}$ of boosts,
by the order two cyclic group $\langle\rho\rangle$.
\end{itemize}
\end{prop}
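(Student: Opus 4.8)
The plan is to leverage Proposition~\ref{prop:AffineAutos}, which already reduces the problem to identifying, among the positive diagonal matrices in the null frame $(\vs,\xm{\vs},\xp{\vs})$, those that are conformal or isometric. Concretely, write a general positive diagonal matrix as $D = \mathrm{diag}(e^{u}, e^{t_1}, e^{t_2})$ acting in null frame coordinates, and recall from the normalization in \S\ref{sec:NullFrames} that the Gram matrix in this basis is $\bigl(\begin{smallmatrix} 1 & 0 & 0 \\ 0 & 0 & -1 \\ 0 & -1 & 0\end{smallmatrix}\bigr)$, equivalently the Lorentz form $da^2 - db\,dc$. The condition that $D$ be conformal with factor $\lambda^2$ is that $D^{\top} G D = \lambda^2 G$; writing this out coordinate-by-coordinate gives $e^{2u} = \lambda^2$ from the $aa$-entry and $e^{t_1 + t_2} = \lambda^2$ from the $bc$-entry (the off-diagonal entry involving $b,c$), while all other entries are automatically zero. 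So the conformal diagonal matrices are exactly those with $e^{2u} = e^{t_1+t_2}$, i.e. $2u = t_1 + t_2$. Reparametrizing by the homothety factor $s$ (with $e^{2s} = \lambda^2$, so $u = s$) and the ``boost parameter'' $t$ defined by $t_1 = s + t$, $t_2 = s - t$, one sees this two-parameter family is exactly $\{ e^s \eta_t \mid s,t\in\R\}$ where $\eta_t$ is the transvection $\xi_t$ of \eqref{eq:Transvections} (note $\eta_t$ here means $\xi_t$; the factor $e^s$ is the homothety). This proves the first bullet: $\Conf(\H)$ is the double extension of this family by $\langle\rho\rangle$, giving the isomorphism with $\R^2\rtimes\Z/2$.

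For the isometry statement, one simply imposes $\lambda = 1$, i.e. $s = 0$, cutting the two-parameter family down to the one-parameter group $\{\xi_t \mid t\in\R\}$ of boosts; then $\Isom(\H)$ is its double extension by $\langle\rho\rangle$, which is $\R^1\rtimes\Z/2$. I would also note that $\rho$ itself, from \eqref{eq:rho}, has $\det = -1$ times... wait---actually $\rho$ as written in \eqref{eq:rho} preserves orientation and lies in $\Oto$ (it is a reflection in the tachyon $\vs$ composed appropriately), so the double extension genuinely lands in $\IsE$, and the nontrivial coset consists of orientation-preserving, time-orientation-reversing isometries, matching the assertion in the introductory theorem about involutions being reflections in tachyons orthogonal to the spine.

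The only real subtlety---and the step I would be most careful about---is the bookkeeping with $\rho$: one must check that $\rho$ normalizes the one-parameter boost group (it conjugates $\xi_t$ to $\xi_{-t}$, since $\rho$ swaps the $b$ and $c$ coordinates and negates $a$), so that the extensions are genuine semidirect products $\R^k \rtimes \Z/2$ with $\Z/2$ acting by $t\mapsto -t$ on the $\R$-factor carrying the boost parameter and trivially on the homothety parameter. This conjugation computation is immediate from the explicit matrices, so there is no genuine obstacle; the argument is essentially a short linear-algebra verification built on top of Proposition~\ref{prop:AffineAutos}. One should also remark that these linear maps all fix $\Vertex(\H)=p$ and hence are honest elements of $\IsE$ (resp.\ $\Conf(\E)$) preserving $\H$, which follows because they preserve each hinge, each wing, and the spine, exactly as in the proof of Proposition~\ref{prop:AffineAutos}.
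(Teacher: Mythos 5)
Your proposal is correct and follows essentially the same route as the paper: the paper's own argument is exactly the reduction via Proposition~\ref{prop:AffineAutos} to diagonal matrices in the null frame, followed by checking which of these are conformal (respectively isometric), and you have simply written out that check explicitly via $D^{\top}GD=\lambda^2 G$. You also correctly flag the paper's notational slip (the ``boosts'' $\eta_t$ in the statement are the transvections $\xi_t$ of \eqref{eq:Transvections}, while $e^s$ is the homothety) and the fact that $\rho$ normalizes the boost subgroup, both of which are consistent with the paper.
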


\subsection{Orbit structure}

In this section we describe the orbit space of $\H$ under the action
of its conformal automorphism group $\Conf(\H)$. The main goal
is that the action is proper with orbit space homeomorphic to a half-closed
interval. The function:
$$
\Phi(a,b,c) := b c /a^2
$$
defines a homeomorphism of the orbit space with $\R\ \cup\{-\infty\}$.
The action is not free.  
The only fixed points are rays in the stem quadrant
which are fixed under conjugates of the involution $\rho$.

\subsubsection{Action on the stem quadrant}
\begin{lemma}\label{lem:propernessStem}
The identity component $\Confo(\H)\cong\R^2$ acts transitively  and freely on the stem quadrant $\Quad(\H)$. 
\end{lemma}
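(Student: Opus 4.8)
The plan is to work in the null-frame coordinates $(a,b,c)$ fixed above, where $\H = \H(\vs, p)$ has vertex at the origin and stem quadrant
$$
\Quad(\H) = \overline{(0,+,-)} = \{ (0, b, c) \mid b \ge 0 \ge c \}.
$$
By Corollary~\ref{cor:StemQuadrant}, the translational semigroup $\V(\H)$ consists of nonnegative combinations of $\xm{\vs} = (0,1,0)$ and $-\xp{\vs} = (0,0,-1)$, so via Proposition~\ref{thm:StemQuadrant} the stem quadrant is exactly the set of points $(0,b,c)$ with $b \ge 0$ and $c \le 0$. The identity component $\Confo(\H)$ is, by Proposition~\ref{prop:ConfAutos}, the abelian group $\{ e^s \eta_t \mid s,t \in \R\} \cong \R^2$, whose elements act by
$$
(a,b,c) \longmapsto (e^s a,\ e^{s+t} b,\ e^{s-t} c).
$$

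First I would note that this action preserves $\Quad(\H)$: on points with $a = 0$, $b \ge 0$, $c \le 0$, the image has first coordinate $0$, second coordinate $e^{s+t} b \ge 0$, third coordinate $e^{s-t} c \le 0$. Next I would establish transitivity on the \emph{interior} of the stem quadrant, i.e.\ the set $\{(0,b,c) \mid b > 0, c < 0\}$: given two such points $(0, b_1, c_1)$ and $(0, b_2, c_2)$, I must solve $e^{s+t} b_1 = b_2$ and $e^{s-t} c_1 = c_2$ for $(s,t)$; taking logarithms gives a linear system $s + t = \log(b_2/b_1)$, $s - t = \log(c_2/c_1)$ with the obvious unique solution, which also shows the action is free there. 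The remaining issue is the boundary of $\Quad(\H)$ — the two hinge rays $\{b > 0, c = 0\}$ and $\{b = 0, c < 0\}$ and the vertex $(0,0,0)$ — where the statement as literally phrased ("transitively and freely on $\Quad(\H)$") cannot hold, since the vertex is fixed by everything and the two hinge rays are invariant subsets.

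Thus the step I expect to be the main (really, the only) obstacle is interpreting the claim correctly: I would read "$\Quad(\H)$" in the lemma as the stem quadrant with its two bounding hinges and vertex removed — equivalently the interior $p + (0,+,-)$, consistent with the convention elsewhere in the paper that crooked halfspaces are taken open — and prove transitivity and freeness there. Concretely, the argument is: (i) show $\Confo(\H)$ preserves the open stem quadrant; (ii) given two interior points, exhibit the unique $e^s \eta_t$ carrying one to the other by solving the $2\times 2$ linear system above; (iii) observe that uniqueness of the solution is precisely freeness of the action. If instead one wants the literal closed $\Quad(\H)$, the correct statement would have to be weakened, and I would flag this; but I expect the intended reading is the open stem quadrant, and the proof is then the short explicit computation just described.
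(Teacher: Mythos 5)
Your proof is correct and takes essentially the same route as the paper: work in the null-frame coordinates, write the action of $\eta_s\xi_t$ on $(0,b,c)$, and solve the resulting $2\times 2$ log-linear system for $(s,t)$, with uniqueness of the solution giving freeness. Your flag about the closed boundary is apt but not a divergence --- the paper's own formulas $s=\tfrac{\log(b)+\log(-c)}{2}$, $t=\tfrac{\log(b)-\log(-c)}{2}$ likewise only make sense on the open quadrant $b>0>c$, so the intended reading there is the same as yours.
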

\begin{proof}
Fix a basepoint $q_0$ in the stem quadrant:
\begin{equation}\label{eq:BasepointInQ}
q_0 := \bmatrix 0 \\ 1 \\ -1 \endbmatrix
\end{equation}
Then:
$$
\eta_s \xi_t (q_0) = \bmatrix 0 \\ e^{s+t} \\ -e^{s-t} \endbmatrix
$$
An arbitrary point in the stem quadrant $\Quad(\H)$ is: 
\begin{equation}\label{eq:ArbitraryPoint}
p  = \bmatrix a \\ b \\ c \endbmatrix, 
\end{equation}
with $a=0$ and $b \geq 0 \geq c$.
Then:
\begin{align*}
p &= \sqrt{- b c}  \bmatrix 0 \\ \sqrt{-b/c} \\ -\sqrt{-c/b} \endbmatrix \\
& = \eta_s \xi_t (q_0),
\end{align*}
where:
\begin{align*}
  s &= \frac{\log(b) + \log(-c)}2, \\
  t &= \frac{\log(b) - \log(-c)}2
\end{align*}
are uniquely determined. 
\end{proof}
However, the group of similarities $\Conf(\H)\ =\ 
\Confo(\H)\rtimes \langle\rho\rangle$
does {\em not\/} act freely on the stem quadrant as the involution $\rho$ fixes the ray
\begin{equation*}
\Fix(\rho) := \left\{ \begin{bmatrix} 0\\ b\\ - b\end{bmatrix}  \   
{\Bigg|} \  b >  0\right\}.
\end{equation*}

\subsubsection{Action on the solid quadrants}

\begin{figure}[ht]
\centerline{\includegraphics{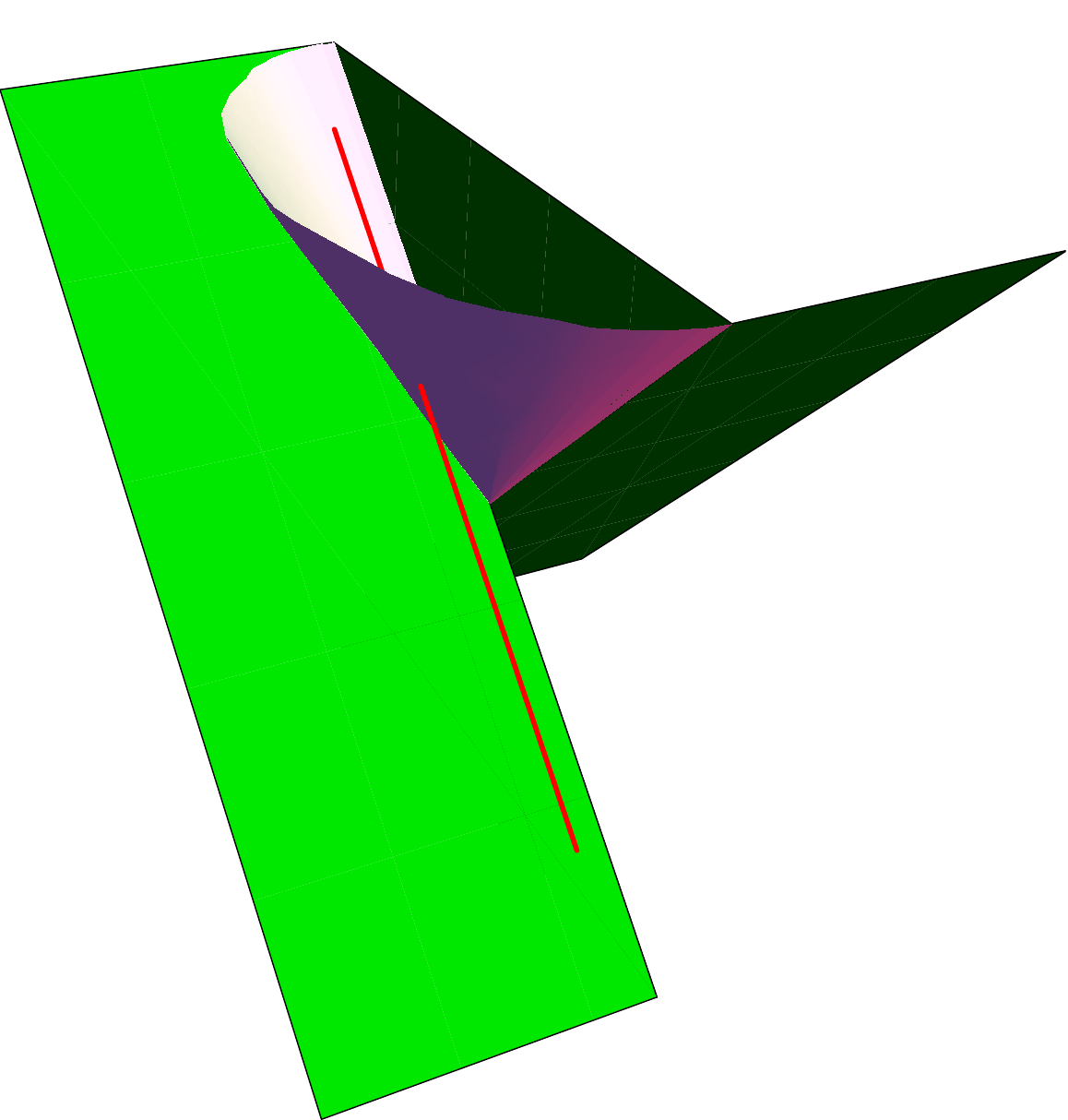}}
\caption{A slice of a basic orbit. The 
line is transverse to all of the orbits.}
\label{fig:orbitslice}
\end{figure}

The stem quadrant $\Quad(\H)$ divides $\H$ into two solid quadrants, $(+,+,\pm)$  defined by $a>0$, and $(-,\pm,-)$ defined by $a<0$.

\begin{lemma}\label{lem:propernessSolid}
The identity component $\Confo(\H)$ acts properly and freely on each 
solid quadrant in $\H\setminus\Quad(\H)$, and $\rho$ interchanges them.
The function:
\begin{align*}
\H \setminus \Quad(\H)&\xrightarrow{\Phi} \R\\\
\bmatrix a \\ b \\c \endbmatrix &\longmapsto bc/a^2
\end{align*}
defines a diffeomorphism:
$$
\big(\H\setminus\Quad(\H)\big)/\Conf(\H) \xrightarrow{\approx} \R.
$$ 
\end{lemma}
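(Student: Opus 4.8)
The plan is to reduce everything to an explicit slice, exactly as in the proof of Lemma~\ref{lem:propernessStem}. First I would identify $\H\setminus\Quad(\H)$ with the disjoint union of the two solid quadrants $Q_+ = (+,+,\pm) = \{a>0,\ b>0\}$ and $Q_- = (-,\pm,-) = \{a<0,\ c<0\}$ in the null-frame coordinates, and observe that $\rho\colon(a,b,c)\mapsto(-a,-c,-b)$ interchanges $Q_+$ and $Q_-$, while $\Confo(\H)=\{\eta_s\xi_t\}$ preserves each of them, acting by $(a,b,c)\mapsto(e^s a,\ e^{s+t}b,\ e^{s-t}c)$. It therefore suffices to study the $\Confo(\H)$-action on $Q_+$ and to carry the conclusions over to $Q_-$ by conjugating with $\rho$.

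On $Q_+$, freeness is immediate: the stabilizer equations $e^s a = a$ and $e^{s+t} b = b$ with $a,b>0$ force $s=t=0$. For properness I would exhibit the global slice $\Sigma := \{(1,1,c)\mid c\in\R\}\subset Q_+$. Given $(a,b,c)\in Q_+$, there is exactly one pair $(s,t)$, namely $s = -\log a$ and $t = \log a - \log b$, carrying $(a,b,c)$ into $\Sigma$, and it depends real-analytically on $(a,b,c)$ precisely because $a,b>0$. Hence the orbit map
\[
\Confo(\H)\times\Sigma \longrightarrow Q_+,\qquad (g,\sigma)\longmapsto g\sigma
\]
is a diffeomorphism intertwining the left-translation action on the first factor; this makes the $\Confo(\H)$-action on $Q_+$ proper and free, and the same holds on $Q_-$ after applying $\rho$.

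Next I would check that $\Phi(a,b,c)=bc/a^2$ is $\Confo(\H)$-invariant — the factor $e^{2s}$ from the homothety cancels between numerator and denominator, and the boost factors $e^{t}$, $e^{-t}$ cancel against each other — and that $\Phi$ restricts on $\Sigma$ to the coordinate $c\mapsto c$. Combined with the slice description this shows $\Phi$ induces a diffeomorphism $Q_\pm/\Confo(\H)\xrightarrow{\approx}\R$ on each solid quadrant. Finally, since $\Phi\circ\rho=\Phi$ and $\rho$ swaps $Q_+$ and $Q_-$, the $\Conf(\H)$-orbits in $\H\setminus\Quad(\H)$ are precisely the fibres of $\Phi$; as $\Phi$ is a submersion on each $Q_\pm$ (its partial derivative in $c$ on $Q_+$, resp.\ in $b$ on $Q_-$, is nonzero there), the induced map $\big(\H\setminus\Quad(\H)\big)/\Conf(\H)\to\R$ is a diffeomorphism.

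There is no deep obstacle here: the whole content is choosing the right slice and the invariant $\Phi$, both dictated by the previous lemma. The only points that require care are (i) verifying that the slice parametrization is a diffeomorphism in \emph{both} directions — equivalently that $s,t$ depend smoothly on $(a,b,c)$, which is exactly where positivity of $a$ and $b$ on $Q_+$ is used — and (ii) matching the two solid quadrants under $\rho$ so that the quotient by the full group $\Conf(\H)=\Confo(\H)\rtimes\langle\rho\rangle$ is genuinely $\R$ rather than a doubled copy; this holds because $\rho$ identifies the fibre of $\Phi$ over $\lambda$ in $Q_+$ with the fibre over $\lambda$ in $Q_-$.
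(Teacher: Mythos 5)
Your proposal is correct and follows essentially the same route as the paper's proof: the explicit slice $\{(1,1,\gamma)\}$ in the quadrant $(+,+,\pm)$, the explicit formulas for $s,t$ in terms of $\log a$ and $\log(b/a)$ (yours are the inverses of the paper's, which parametrizes the map from the slice to the point rather than the other way), the invariant $\Phi=bc/a^2$, and the transfer to the other solid quadrant via $\rho$. The only differences are that you make explicit a few checks the paper leaves implicit (freeness via the stabilizer equations, $\Confo(\H)$- and $\rho$-invariance of $\Phi$), which is fine.
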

\begin{proof}
We only consider the solid quadrant $(+,+,\pm)$, 
since $(-,\pm,-)$ follows from this case by applying $\rho$.

We show that the set $B$ of all:
\begin{equation}\label{eq:pgamma}
p_\gamma := \bmatrix 1 \\ 1 \\ \gamma \endbmatrix,
\end{equation}
where $\gamma\in\R$, is a slice for the action on $(+,+,\pm)$.
Namely, the map:
\begin{align*}
\Confo(\H) \times B &\longrightarrow  (+,+,\pm)\subset\H\\
\big( (\eta_s,\xi_t), p_\beta\big) &\longmapsto  \eta_s \xi_t ( p_\beta)
\end{align*}
is a diffeomorphism.  If $p$ is an arbitrary point as in \eqref{eq:ArbitraryPoint}
above, and $a,c > 0$, then:
\begin{align*}
s & := \log(a) \\
t  & := \log(b/a) \\
\beta & := \Phi(p) = bc/a^2
\end{align*}
uniquely solves:
$$
\eta_s \xi_t (p_\beta) = p
$$
and defines the smooth inverse map.
Thus $\Confo(\H)$ acts properly and freely on each solid quadrant.
Since $\rho$ interchanges these quadrants, 
$\Conf(\H) = \Confo(\H) \rtimes \langle\rho\rangle$  acts properly and freely on 
$\H\setminus\Quad(\H)$ as claimed and $\Phi$ defines a quotient map.
\end{proof}

\subsubsection{Putting it all together}

Now combine Lemmas~\ref{lem:propernessStem} and \ref{lem:propernessSolid}
to prove that $\Conf(\H)$ acts properly on $\H$.
Furthermore, the quotient map $\Phi$ takes $\H$ 
onto the infinite half-closed interval $\{-\infty\} \cup \R$.

The main problem is that the slice $B$ used in the proof of
Lemma~\ref{lem:propernessSolid} does not extend to $\Quad(\H)$,
since $a \equiv 1$ on $B$ and $\Quad(\H)$ is defined by $a=0$.
To this end we replace the slice $B$, for parameter values $1\ge a > 0$,
by an equivalent slice $B'$. 
The new slice $B'$ is parametrized by a variable $1 \ge a\ge 0$, which converges to the basepoint $q_0\in\Quad(\H)$ as 
defined in \eqref{eq:BasepointInQ} as $a \nearrow 1$ and
converges to $p_0$ as $a \searrow 0$. 
These points $p_0$ on $B$ corresponding
to parameter value $\beta = 0$, and the basepoint on $\Quad(\H)$ equal:
$$
p_0 = \bmatrix 1 \\ 1 \\ 0 \endbmatrix, \quad
q_0 = \bmatrix 0 \\ 1 \\ -1 \endbmatrix,
$$
respectively. Thus we replace the segment of the slice $B$ for $\beta\le 0$,
by points of the form:
$$
p'_a := \bmatrix a \\ 1 \\ a - 1  \endbmatrix
$$
for $1\ge a > 0$. The corresponding $\gamma$-parameter is:
$$
\gamma(a) := \Phi(p'_a) = \frac{a-1}{a^2}
$$
with inverse function:
$$
a(\gamma) := \frac{1-\sqrt{1-4\gamma}}{2\gamma}
$$
We obtain inverse diffeomorphisms:
\[
(0,1) \xrightarrow{\gamma} (-\infty,0) 
\xrightarrow{a} (0,1) 
\]
which extend to homeomorphisms $[0,1] \approx [-\infty,0].$

\subsubsection{A global slice}
Thus we construct a slice $\sigma$ for the $\Confo(\H)$-action on $\H$ using the function $\gamma = \Phi(p)$ extended to:
$$
\H \xrightarrow{\Phi} \{-\infty\} \cup \R 
$$
by sending $\Quad(\H)$ to $-\infty$. Furthermore we can extend $\sigma$ uniquely
to a $\rho$-equivariant slice for the action of $\Confo(\H)$ on $\H$. 
We define the continuous slice for the parameter $-\infty  < a < \infty$;
it is smooth except for parameter values $a = -1, 0, 1$ where it equals:
$$
\sigma(-1) := \bmatrix -1 \\ 0 \\ -1 \endbmatrix, \quad
\sigma(0) := q_0  = \bmatrix 0 \\ 1 \\ -1 \endbmatrix, \quad
\sigma(1) :=  \bmatrix 1 \\ 1 \\ 0 \endbmatrix 
$$
On the intervals $(-\infty, -1]$, $[-1,0]$, $[0,1]$, and $[1,\infty)$, 
smoothly interpolate between these values:
\begin{alignat*}{2}
&\text{For~}   a \le -1, &  \quad \sigma(a) &:= 
\;\bmatrix -1 \\  -\gamma(-a) \\ -1 \endbmatrix \\
&\text{For}  -1 \le a \le 0, & \quad  \sigma(a) &:= 
\;\bmatrix a \\  a + 1 \\ -1 \endbmatrix \\
&\text{For~}  0\le a \le 1, &\quad   \sigma(a) &:= 
\;\bmatrix a \\ 1\\ a-1  \endbmatrix  \\
&\text{For~}  1\le a, &  \quad \sigma(a) &:= 
\;\bmatrix 1 \\ 1\\ \gamma(a)  \endbmatrix 
\end{alignat*}

\section{Lines in a halfspace}\label{sec:Particles}
In this section we classify the lines which lie entirely in a crooked halfspace.
The natural context in which to initiate this question is affine; 
we develop a criterion in terms of the stem quadrant for a line to lie
in a halfspace. 

\subsection{Affine lines}

Given an affine line $\ell\subset\E$ and a point $o\in\E$, 
a unique line, denoted $\ell_o$, is parallel to $\ell$ and contains $o$.
Let $\eta_t^{(o)}$ denote the one-parameter group of
homotheties fixing $o$
and preserving the crooked halfspace:
\begin{align*} 
\E &\xrightarrow{\eta_t^{(o)}} \E \\
o + \vv &\longmapsto  o + e^t \vv.
\end{align*}
Then: 
\begin{equation}\label{eq:HomothetiesLimit}
\ell_o = \lim_{t\to-\infty} \eta_t^{(o)}(\ell).
\end{equation}

\begin{lemma}
If $\ell\subset\H$ and $o = \Vertex(\H)$, then $\ell_o\subset\cH$.
\end{lemma}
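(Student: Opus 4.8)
The plan is to use the scaling description of $\ell_o$ given in equation~\eqref{eq:HomothetiesLimit} together with the fact that $\H$ is preserved by the homotheties $\eta_t^{(o)}$ based at the vertex $o=\Vertex(\H)$. Since $\eta_t^{(o)}$ is an affine automorphism of $\H$ (it appears in Proposition~\ref{prop:AffineAutos} as the one-parameter subgroup with $t=u$, $s$ free — more precisely it is $e^s$ times the identity when $o$ is the vertex), each translate $\eta_t^{(o)}(\ell)$ lies in $\H$ for every $t\in\R$. Hence the limiting line $\ell_o = \lim_{t\to-\infty}\eta_t^{(o)}(\ell)$ lies in the closure $\cH$: it is a Hausdorff limit of subsets of the closed set $\cH$, so it is contained in $\cH$. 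That is the whole idea; the only real content is making the limit argument precise.

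Concretely, I would work in the null-frame coordinates $(a,b,c)$ centered at $o$, so that $\cH = \{a,b\ge 0\}\cup\{a,c\le 0\}$ and $\eta_t^{(o)}$ is scalar multiplication by $e^t$. Write $\ell = \{q_0 + r\vv \mid r\in\R\}$ for some $q_0\in\H$ and direction vector $\vv$; then $\ell_o = \{r\vv \mid r\in\R\} = o + \R\vv$, and $\eta_t^{(o)}(\ell) = \{e^t q_0 + r\vv \mid r\in\R\}$ (reparametrizing $r\mapsto e^{-t}r$ absorbs the scaling of the direction vector). Fix a point $r\vv\in\ell_o$; then $e^t q_0 + r\vv \in \eta_t^{(o)}(\ell)\subset\H$ for all $t$, and letting $t\to-\infty$ gives $e^t q_0 + r\vv \to r\vv$. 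Since $\cH$ is closed and each $e^t q_0 + r\vv\in\H\subset\cH$, the limit $r\vv$ lies in $\cH$. As $r\vv$ was an arbitrary point of $\ell_o$, we conclude $\ell_o\subset\cH$.

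The step requiring the most care is checking that $\eta_t^{(o)}$ genuinely preserves $\H$ — i.e.\ that it is one of the affine automorphisms of $\H$. This is immediate from the coordinate description: scalar multiplication by $e^t>0$ preserves the sign of each coordinate $a,b,c$, hence preserves each of the defining conditions $\{a>0,\,b>0\}$, $\{a=0,\,b>0>c\}$, $\{a<0,\,c<0\}$ of $\H$ in \S3.1, and equally preserves the closed conditions defining $\cH$. (Alternatively one can cite Proposition~\ref{prop:AffineAutos} directly, noting positive homotheties based at the vertex lie in $\Aff(\H)$.) One should also note that the reason we only get $\ell_o\subset\cH$, rather than $\ell_o\subset\H$, is exactly that the limit of points of $\H$ may land on the boundary crooked plane $\partial\H$; for instance if $\ell$ lies in the solid quadrant $a>0$ but $q_0$ has $c$-coordinate of either sign, the line $\ell_o$ through the origin can meet the wing $\W_- = \{a\ge 0,\ b=0\}$ or the hinge, which are in $\cH\setminus\H$. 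So the closed conclusion is sharp, and no more can be expected at this level of generality — sharper statements (when $\ell_o\subset\H$) are what the subsequent sections address.
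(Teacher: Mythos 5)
Your proposal is correct and follows exactly the paper's argument: the positive homotheties $\eta_t^{(o)}$ based at the vertex lie in $\Aff(\H)$, so each $\eta_t^{(o)}(\ell)\subset\H$, and the limit \eqref{eq:HomothetiesLimit} applied to the closed set $\cH$ gives $\ell_o\subset\cH$. You have merely made the limit step and the invariance check more explicit than the paper does, which is fine.
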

\begin{proof}
The homotheties $\eta_t^{(o)}\in\Aff(\H)$ so 
$\eta_t^{(o)}(\ell)\subset\H$.
Apply \eqref{eq:HomothetiesLimit} to the 
closed set $\cH$ to conclude that
$\ell_o\subset\cH$.
\end{proof}
Now let $\Ss\subset\E$ be the stem plane of $\H$.
Unless $\ell$ is parallel to $\Ss$, it meets $\Ss$ in a unique
point $p$. Since $\ell\subset\H$ and: 
$$
\overline{\H \cap \Ss} = \Quad(\H),
$$ 
the stem quadrant $\Quad(\H)\ni p$. 
Then translation of $\ell_o$ by $p-o$ 
is a line through $p$ which is parallel to $\ell$, 
and thus equals $\ell$. Therefore:
\begin{lemma}
Every line contained in $\cH$ 
not parallel to $\Ss$ is the translate of a line in $\cH$
passing through $\Vertex(\H)$ by a vector in $\V(\H)$.
\end{lemma}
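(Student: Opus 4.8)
The plan is to reduce to the situation already worked out above. Fix $o:=\Vertex(\H)$ and use the null-frame coordinates $(a,b,c)$ of \S\ref{sec:NullFrames}, in which $\Ss=\{a=0\}$, $\cH=\{a,b\ge 0\}\cup\{a,c\le 0\}$, and $\Quad(\H)=\overline{(0,+,-)}$. Let $\ell\subset\cH$ be a line not parallel to $\Ss$. I would carry out four steps: (i) the parallel copy $\ell_o$ of $\ell$ through $o$ lies in $\cH$; (ii) the point $p:=\ell\cap\Ss$ lies in the stem quadrant $\Quad(\H)$; (iii) the displacement $\vv:=p-o$ lies in $\V(\H)$; (iv) $\ell=\ell_o+\vv$, which exhibits $\ell$ as the required translate.

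For (i) I would reuse the argument of the preceding lemma almost verbatim: the contracting homotheties $\eta_t^{(o)}$ (for $t\le 0$) are affine automorphisms of $\H$, hence of $\cH$, so $\eta_t^{(o)}(\ell)\subset\cH$; since $\cH$ is closed, passing to the limit in \eqref{eq:HomothetiesLimit} gives $\ell_o=\lim_{t\to-\infty}\eta_t^{(o)}(\ell)\subset\cH$.

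For (ii), since $\ell$ is not parallel to the plane $\Ss$ it meets $\Ss$ in the single point $p$, and $\ell\setminus\{p\}$ is a union of two open rays lying on opposite sides of $\Ss$. Now $\cH\cap\{a>0\}=\{a>0\}\cap\{b\ge 0\}$ and $\cH\cap\{a<0\}=\{a<0\}\cap\{c\le 0\}$, so one ray of $\ell$ lies in $\{b\ge 0\}$ and the other in $\{c\le 0\}$. As $p$ is a limit point of each ray and these halfspaces are closed, $b(p)\ge 0$ and $c(p)\le 0$; together with $a(p)=0$ this says $p\in\Quad(\H)$. Then Proposition~\ref{thm:StemQuadrant} gives $\Quad(\H)=o+\V(\H)$, so $\vv:=p-o\in\V(\H)$, which is (iii). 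Finally $\ell_o+\vv$ passes through $o+\vv=p$ and is parallel to $\ell_o$, hence to $\ell$; since a line is determined by a point together with its direction, $\ell_o+\vv=\ell$, giving (iv). Thus $\ell$ is the translate of the line $\ell_o\subset\cH$ through $\Vertex(\H)$ by the vector $\vv\in\V(\H)$.

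The main obstacle is step (ii). It is tempting to argue that $p\in\cH\cap\Ss$ and then invoke $\overline{\H\cap\Ss}=\Quad(\H)$, but $\cH\cap\Ss$ is strictly larger than $\Quad(\H)$ — it also contains the future and past stems — so knowing merely that the single point $p$ lies in $\cH$ is not enough. One must use that $\ell$ is an \emph{entire} line contained in $\cH$, together with the fact that on either side of the stem plane $\cH$ restricts to an ordinary (non-crooked) halfspace. Note that for a line contained in the \emph{open} halfspace $\H$ this difficulty evaporates, since then $p\in\H\cap\Ss\subset\Quad(\H)$ immediately.
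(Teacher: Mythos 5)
Your proposal is correct and follows essentially the same route as the paper: contract by the homotheties $\eta_t^{(o)}$ to get $\ell_o\subset\cH$, locate $p=\ell\cap\Ss$ in $\Quad(\H)$, and invoke Proposition~\ref{thm:StemQuadrant} to conclude $p-o\in\V(\H)$. Your step (ii) is in fact more careful than the paper's, which simply cites $\overline{\H\cap\Ss}=\Quad(\H)$ under the open-halfspace hypothesis $\ell\subset\H$; your two-ray argument is exactly what is needed to cover a line lying only in the closed halfspace $\cH$, where $\cH\cap\Ss$ also contains the stem.
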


\subsubsection{Lines through the vertex}\label{sec:LinesThruVertex}
Now we determine when a line $\ell$ translated by a nonzero vector 
lies in $\cH$. Suppose that $\ell$ is spanned by the vector:
\begin{equation}\label{eq:vector}
\vv = \bmatrix \alpha \\ \beta \\ \gamma \endbmatrix.
\end{equation}
We shall use the {\em orthogonal projection\/} to the stem plane $\Ss$
defined by:
\begin{align*}
\E & \longrightarrow \Ss \subset \E \\
\bmatrix x \\ y \\ z \endbmatrix &\longmapsto 
\bmatrix 0 \\ y \\ z \endbmatrix.
\end{align*}

First suppose that $\ell$ doesn't lie in the
stem plane $\Ss$, that is, $\alpha\neq 0$. 
By scaling, assume that $\alpha = 1$.
Then $\ell$ consists of all vectors:
$$
a\vv = \bmatrix a \\ a\beta \\ a\gamma \endbmatrix,
$$
where $a\in\R$. Since:
$\H = (+,+,\pm) \cup (0,+,-)\cup (-,\pm,-)$, the condition
that $a\vv\in\cH$ is equivalent to the two conditions:
\begin{itemize}
\item $a<0$ implies $a\gamma \le 0$;
\item $a>0$ implies $a\beta \ge 0$.
\end{itemize}
Thus $\ell\subset\cH$ if and only if $\beta, \gamma \ge 0$.
Moreover $\ell\cap \cH = \{o\}$ if and only if $\beta, \gamma <  0$.

It remains to consider the case when $\ell\subset\Ss$, that is, $\alpha = 0$.
Since: 
$$
\Stem(\H) \  =\  \overline{(0,+,+)\cup (0,-,-)},
$$ 
the condition that $\ell\subset\cH$ is equivalent to the conditions 
$\beta \gamma \ge 0$, that is, $\ell$ lies in a solid quadrant in $\cH$
whose projection to $\Ss$ maps to $\overline{\Stem(\H)}\subset \Ss$.
We have proved:

\begin{lemma}
Suppose  $\ell\subset \cH$ is a line.
Then orthogonal projection to $\Ss$ 
maps $\ell$ to $\overline{\Stem(\H)}\subset \Ss$.
\end{lemma}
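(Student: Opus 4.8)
The plan is to turn the case analysis just carried out in \S\ref{sec:LinesThruVertex} into one statement about orthogonal projection $\pi\colon\E\to\Ss$, $(a,b,c)\mapsto(0,b,c)$, and --- when $\ell$ misses the vertex $o:=\Vertex(\H)$ --- to transport it back along the structure lemma above, which writes a line in $\cH$ not parallel to $\Ss$ as a translate, by a vector of $\V(\H)$, of the parallel line through $o$.

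The single computation driving this is that $\pi$ sends a line of direction $\vv=(\alpha,\beta,\gamma)$ to an affine line --- or, when $\beta=\gamma=0$, a point --- of direction $(0,\beta,\gamma)$, and $(0,\beta,\gamma)\cdot(0,\beta,\gamma)=-\beta\gamma$ in the metric $da^2-db\,dc$. Since $\overline{\Stem(\H)}=\{\,o+\vw\mid\vw\in\vs^\perp,\ \vw\cdot\vw\le0\,\}$, a line through $o$ is carried into $\overline{\Stem(\H)}$ exactly when $\beta\gamma\ge0$ (the sign of $t^2\beta\gamma$ being independent of $t$). So for lines through the vertex the lemma reduces to the implication ``$\ell\subset\cH\Rightarrow\beta\gamma\ge0$'', which is exactly what \S\ref{sec:LinesThruVertex} proves: when $\alpha\ne0$ one normalizes $\alpha=1$ and finds $\beta,\gamma\ge0$; when $\alpha=0$ one finds $\beta\gamma\ge0$ directly. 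This disposes of all lines through $o$.

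For $\ell\subset\cH$ not through $o$: if $\ell$ meets $\Ss$ transversally, the structure lemma writes $\ell=\ell_o+\vt$ with $\ell_o\subset\cH$ the parallel line through $o$ and $\vt\in\V(\H)$; as $\pi$ is affine and $\vt\in\vs^\perp$, one has $\pi(\ell)=\pi(\ell_o)+\vt$ with $\pi(\ell_o)\subset\overline{\Stem(\H)}$ from the previous step. If $\ell$ is parallel to $\Ss$, then either $\ell\subset\Ss$, so that $\pi(\ell)=\ell$ and the $\alpha=0$ subcase applies verbatim, or $\ell$ lies in a plane $\{a=a_0\}$ with $a_0\ne0$, where containment in $\cH=\{a,b\ge0\}\cup\{a,c\le0\}$ forces one coordinate of the direction of $\ell$ to vanish, and one projects as before.

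I expect the real obstacle to be controlling the translate $\pi(\ell_o)+\vt$ (and, in parallel, the lines off $\Ss$): the set $\overline{\Stem(\H)}+\vt$ is the stem of the \emph{translated} halfspace $\H+\vt$, not of $\H$, so one cannot quote invariance but must instead check in the $(b,c)$-coordinates of $\Ss$ that the specific line $\pi(\ell_o)+\vt$ --- a non-spacelike line issuing from a point of $\Quad(\H)$ --- does not leave $\overline{\Stem(\H)}$. Concretely this amounts to tracking the quadratic $t\mapsto(t\beta+\beta_0)(t\gamma+\gamma_0)$, with $\beta,\gamma\ge0$ recording $\ell_o\subset\cH$ and $\beta_0\ge0\ge\gamma_0$ recording $\vt\in\V(\H)$ via Corollary~\ref{cor:StemQuadrant}, by splitting on the ranges of $t$ determined by its two roots. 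Pinning down that sign analysis is the only step that is not formal, and it is where the geometry of the stem quadrant really enters.
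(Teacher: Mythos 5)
Your handling of lines through the vertex is exactly the paper's argument: the case analysis of \S\ref{sec:LinesThruVertex} yields $\beta,\gamma\ge 0$ when $\alpha\ne 0$ and $\beta\gamma\ge 0$ when $\alpha=0$, and in either case the projected line $\{(0,t\beta,t\gamma)\}$ satisfies $bc=t^2\beta\gamma\ge 0$, hence lies in $\overline{\Stem(\H)}=\{a=0,\ bc\ge0\}$. That computation is in fact \emph{all} the paper offers: the ``We have proved'' closing that subsection refers only to lines through $\Vertex(\H)$, and the lemma is subsequently applied only to lines of the form $\R\vv$.

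The step you defer to the end --- showing $q(t)=(t\beta+\beta_0)(t\gamma+\gamma_0)\ge 0$ for all $t$ when $\beta,\gamma\ge0$ and $\beta_0\ge0\ge\gamma_0$ --- is not merely ``the only non-formal step''; it is false, and with it the lemma for lines missing the vertex. Take $\ell_o=\R\,(1,1,0)\subset\cH$ (so $\beta=1$, $\gamma=0$) and translate by $\vt=-\xp{\vs}=(0,0,-1)\in\V(\H)$. The line $\ell=\{(t,t,-1)\}$ lies in $\cH=\{a,b\ge0\}\cup\{a,c\le0\}$ (check $t\ge0$ and $t<0$ separately), but its orthogonal projection $\{(0,t,-1)\}$ has $bc=-t<0$ for $t>0$, so it leaves $\overline{\Stem(\H)}$; here $q(t)=-t$ changes sign. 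Perturbing to $\{(t,t+\epsilon,-1)\}$ with $\epsilon>0$ gives a line in the \emph{open} halfspace with the same defect, so this is not a boundary artifact. Consequently the general-position half of your plan cannot be completed, and the lemma should be read as the paper proves and uses it: a statement about lines through the vertex, or equivalently about directions --- the projection $(0,\beta,\gamma)$ of the direction vector of any line contained in $\cH$ is non-spacelike.
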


\subsection{Lines contained in a halfspace and linearization}

Suppose that $\vv$ defined in \eqref{eq:vector} is future-pointing timelike, 
and that $\R\vv\subset\cH$. Then 
the discussion in \S\ref{sec:LinesThruVertex} implies that
necessarily $\beta,\gamma\ge 0$. 
Now $\alpha^2 < \beta\gamma$ implies that 
$\beta,\gamma > 0$, that is, that all coordinates $\alpha, \beta, \gamma$
have the same (nonzero) sign.\
This condition is equivalent to the future-pointing timelike vector having
positive inner product with the unit-spacelike vector:
$$
\vs_0 := \bmatrix 1 \\ 0 \\ 0 \endbmatrix,
$$
and this condition defines a {\em halfplane\/} in $\Ht$.
We conclude:

\begin{thm}
Let $\H(p,\vs)\subset\E$ be a crooked halfspace.
Then the collection of all future-pointing unit-timelike vectors parallel
to a particle contained in $\H(p,\vs)$
is the halfplane $\h(\vs)\subset\Ht$.
\end{thm}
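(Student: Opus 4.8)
The plan is to reduce the statement to the case already computed in \S\ref{sec:LinesThruVertex} by using transitivity and the earlier lemmas on lines in a halfspace. By Lemma~\ref{lem:transitivity}, every crooked halfspace is $\IsE$-equivalent to the standard one $\H(\origin,\vs_0)$ with $\vs_0 = (1,0,0)^\top$ in null frame coordinates, and isometries carry particles to particles, halfspaces to halfspaces, and (via their linear parts in $\SOto$) halfplanes $\h(\vs)$ in $\Ht$ to halfplanes; since linearization is defined through $\h(\vs)$ and the correspondence $\vs \mapsto \h(\vs)$ is $\SOto$-equivariant, it suffices to prove the theorem for $\H = \H(\origin,\vs_0)$.

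For the standard halfspace, I would argue in two directions. First, suppose $\vv = (\alpha,\beta,\gamma)^\top$ is future-pointing unit-timelike and parallel to a particle $\ell \subset \H$. A priori $\ell$ need not pass through the vertex, but the sequence of lemmas in \S\ref{sec:Particles} handles this: applying the homotheties $\eta_t^{(\origin)}$ and passing to the limit (equation~\eqref{eq:HomothetiesLimit}) shows $\ell_\origin = \R\vv \subset \cH$. Then the computation in \S\ref{sec:LinesThruVertex} for the case $\alpha \neq 0$ gives $\beta,\gamma \ge 0$, and the timelike condition $\alpha^2 < \beta\gamma$ forces $\beta,\gamma > 0$, hence all three coordinates share the sign of $\alpha$; since $\vv$ is future-pointing this common sign is positive, so $\vv\cdot\vs_0 = \alpha > 0$ and $\vv \in \h(\vs_0)$. (The edge case $\alpha = 0$ cannot occur for a timelike vector whose line lies in $\cH$, since then $\ell \subset \Ss$ would be a timelike line in the stem plane, which does not lie in the \emph{open} $\H$.) Conversely, given $\vv$ future-pointing unit-timelike with $\vv\cdot\vs_0 = \alpha > 0$, the octant description $\H \supset (+,+,\pm) \cup (0,+,-) \cup (-,\pm,-)$ together with $\beta,\gamma>0$ (which follows from $\alpha^2<\beta\gamma$ and $\beta,\gamma$ of the same sign as $\alpha$, via the timelike and future-pointing conditions) shows that the particle $\ell$ through a suitable point with direction $\vv$ lies in $\H$; concretely one checks that $\origin + \R\vv$ meets $\H$ only at the origin when translated appropriately, but by translating by an element of $\V(\H)$ (Corollary~\ref{cor:StemQuadrant}) one produces a particle parallel to $\vv$ lying entirely in $\H$. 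Thus the set of such $\vv$ is exactly $\h(\vs_0)$.

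The one genuinely delicate point is the converse direction: from $\vv \cdot \vs_0 > 0$ I must exhibit an \emph{honest} particle contained in the \emph{open} halfspace, not merely the ray through the vertex, which lies only in the closure $\cH$. The remedy is to translate: by Corollary~\ref{cor:StemQuadrant} the translational semigroup $\V(\H(\origin,\vs_0))$ consists of nonnegative combinations of $\xm{\vs_0}=(0,1,0)^\top$ and $-\xp{\vs_0}=(0,0,-1)^\top$, i.e.\ vectors $(0,\mu,-\nu)^\top$ with $\mu,\nu\ge 0$; translating $\R\vv$ by such a vector with $\mu,\nu>0$ pushes it off the boundary crooked plane into the interior, because in the region $a>0$ the defining inequality becomes $b = a\beta + \mu > 0$ strictly, in the region $a<0$ it becomes $c = a\gamma - \nu < 0$ strictly, and at $a=0$ one gets $b=\mu>0 > -\nu=c$. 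I would present this as the main computation. Finally I note that the "moreover, every point of $\H$ lies on such a particle" claim promised in the introduction follows by the same translation argument: given any $q\in\H$, one shows $q$ lies on the $\V(\H)$-translate of a suitable line through the vertex whose direction lies in $\h(\vs)$.
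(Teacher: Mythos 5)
Your proof is correct and follows essentially the same route as the paper's: work in the standard null-frame coordinates, pull a particle back to a line through the vertex via the homotheties $\eta_t^{(o)}$ of \S\ref{sec:Particles}, and read the sign conditions off the octant computation of \S\ref{sec:LinesThruVertex}, identifying the resulting condition with $\vv\cdot\vs>0$. You are in fact more complete than the paper in one respect: the paper only records the forward implication ($\R\vv\subset\cH$ forces $\vv\cdot\vs_0>0$), leaving the converse implicit, whereas you exhibit, for each direction in the halfplane, an honest particle in the \emph{open} halfspace by translating the ray through the vertex by an interior vector $(0,\mu,-\nu)$ of the stem quadrant; that is exactly the right mechanism (and is the same device the paper uses in the following theorem on unions of particles). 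One small repair in your treatment of the case $\alpha=0$: such a line is only \emph{parallel} to the stem plane $\Ss$, not necessarily contained in it, so rather than asserting $\ell\subset\Ss$ you should note that $\ell$ lies in a plane $a=\mathrm{const}$ and that the coordinate $b$ (if $\mathrm{const}\geq 0$) or $c$ (if $\mathrm{const}\leq 0$) changes sign along any timelike direction in $\vs^\perp$, so $\ell$ cannot stay in $\H$. This observation also shows that the set of directions realized is the open halfplane $\{\vv\cdot\vs>0\}$ rather than the closed $\h(\vs)$ of the statement; that slight mismatch is present in the paper's own proof as well and is not a defect of your argument.
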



\subsection{Unions of particles}

We close this section with a converse statement.

\begin{thm}
Let $\H\subset\E$ be a crooked halfspace. 
Then every $p\in\H$ lies on a particle contained in $\H$.
\end{thm}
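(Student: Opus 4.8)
The plan is to reduce to the standard halfspace and then build an explicit particle through the given point. By Lemma~\ref{lem:transitivity} the group $\IsE$ acts transitively on crooked halfspaces, so it suffices to take $\H=\H(\vs,0)$ in the null-frame coordinates $(a,b,c)$ of \S\ref{sec:NullFrames}, where, as computed above, $(a,b,c)\in\H$ exactly when $b>0$ in case $a\ge 0$ and $c<0$ in case $a\le 0$. Fix $p=(a_0,b_0,c_0)\in\H$; I will look for a future-pointing timelike vector of the form $\vv=(1,\beta,\gamma)$ --- which is timelike and future-pointing precisely when $\beta,\gamma>0$ and $\beta\gamma>1$, since the future component of the timelike cone is $\{\beta,\gamma>0,\ \beta\gamma>\alpha^2\}$ and contains the model vector $(0,1,1)=\vt_0$ --- and arrange that the whole line $\ell:=p+\R\vv$ lies in $\H$.

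The key observation is that, for such a $\vv$, containment $\ell\subset\H$ reduces to two linear inequalities. Indeed $p+u\vv=(a_0+u,\,b_0+u\beta,\,c_0+u\gamma)$ lies in $\H$ iff $b_0+u\beta>0$ whenever $u\ge -a_0$ and $c_0+u\gamma<0$ whenever $u\le -a_0$; since $\beta,\gamma>0$ both affine functions of $u$ are increasing, so each family of inequalities holds for all relevant $u$ as soon as it holds at the single endpoint $u=-a_0$. Hence $\ell\subset\H$ if and only if $\beta,\gamma>0$, $\beta\gamma>1$, $b_0>a_0\beta$, and $c_0<a_0\gamma$.

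It then remains to check that for every $p\in\H$ one can choose $\beta,\gamma$ meeting these four conditions, which I would do by cases on the sign of $a_0$, i.e.\ on which stratum of $\H$ contains $p$. If $a_0=0$, membership $p\in\H$ already gives $b_0>0$ and $c_0<0$, so any $\beta=\gamma$ large enough works. If $a_0>0$, then $b_0>0$, so $\beta$ may be taken in the open interval $(0,b_0/a_0)$ and then $\gamma$ chosen larger than both $c_0/a_0$ and $1/\beta$. If $a_0<0$, then $c_0<0$, so $\gamma$ may be taken in $(0,c_0/a_0)$ (a nonempty positive interval) and then $\beta$ taken large, the requirement $b_0>a_0\beta$ being then only a lower bound on $\beta$. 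I do not expect a genuine obstacle: the content is only that the constraints cut out by the strata of $\H$ form a finite, jointly satisfiable system of sign/linear conditions; the one point needing a little care is $a_0>0$, where the wing forces the upper bound $\beta<b_0/a_0$, together with the verification that the chosen direction is future-pointing. Alternatively, since $\Conf(\H)$ preserves the causal type of lines and, by \S\ref{sec:Symmetry}, meets $\H$ in orbits indexed by $\{-\infty\}\cup\R$ via $bc/a^2$, it would be enough to exhibit such a particle through the stem-quadrant basepoint $q_0=(0,1,-1)$ and through each $p_\gamma=(1,1,\gamma)$, which the same computation handles uniformly.
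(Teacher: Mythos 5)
Your proof is correct and follows essentially the same route as the paper's: normalize to the standard halfspace, split into cases according to the sign of the $a$-coordinate of $p$, and exhibit an explicit timelike direction whose line through $p$ stays in $\H$. The only cosmetic difference is that you reduce containment of the whole line to the two inequalities at the single crossing parameter $u=-a_0$ and then solve them for $(\beta,\gamma)$, whereas the paper builds the direction as $p-q$ for a well-chosen point $q$ of the stem quadrant; both verifications rest on the same monotonicity observation along the line.
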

\begin{proof}
It suffices to prove the theorem for the
halfspace:
\[
\H = (+, +, \pm) \cup (0, +,-) \cup (-, \pm -).
 \]
 Start with any $p$ 
in the open  solid quadrant $(+, +, \pm) $ 
We will now describe a point $q$ 
in the stem quadrant $(0, +,-) $ 
for which the vector $p-q$ is timelike.  
Write:
$$
p = \left( \begin{array}{c} a \\ b\\ c  \end{array}\right), \ 
q = \left( \begin{array}{c} 0 \\ B \\ C  \end{array}\right) 
$$
so that $a, b >0$ and 
$B>0 >  C$.
First choose $B$ so that $0<B<b$, then
choose:  $$
C < \mbox{min}\bigg(0, c - \frac{a^2}{b-B}\bigg).
$$ 
Let: $$
\vv = p-q = \begin{bmatrix} a \\ b-B \\ c-C \end{bmatrix} 
$$ 
so that:
$$
\vv\cdot\vv  = a^2 - (b-B)(c-C) < a^2 -a^2 =0 , 
$$ 
$b-B >0$ and $c-C > 0$. 
That is, the line $q+ t\vv$  is a particle. All of the points on the line where $t>0$ lie inside the solid quadrant $( +, +,\pm )$ and 
all of the points where $t<0$ lie inside the solid quadrant $(-, \pm, -)$.


A similar calculation applies to points in the $(-,\pm , -)$ solid quadrant.

It remains only to consider points $p\in\Quad(\H) = (0, +,-) $. Any timelike vector pointing inside of $\H$ will suffice, but 
choose the timelike vector:
$$
\vv= \begin{bmatrix} 1/2 \\ 1\\ 1 \end{bmatrix}.
$$
Consider the line:
$$
p + t \vv = \left( \begin{array}{c} t/2  \\ b + t \\ c+ t \end{array} \right) . 
$$ 
All points where $t>0$ lie inside $(+,+, \pm)$, and all points where $t<0$ lie inside $(-, \pm , -)$. 

\end{proof}

\section{Disjointness criteria}
In this section we revisit the theory developed in~\cite{DrummGoldman} in terms of the notion of stem quadrants and
crooked halfspaces. If $\H_1, \H_2$ are disjoint crooked halfspaces,
then their linearizations $\h_i = \L(\H_i)$ are disjoint halfplanes in $\Ht$
(Corollary~\ref{cor:disjointness}). Suppose that $\vs_i$ is the spacelike
vector corresponding to $\h_i$ as in \S\ref{sec:hyperbolichalfplanes}
and that they are consistently oriented. 
\begin{defn}
Let $\vs_1,\vs_2$ be consistently oriented spacelike vectors.  The interior of $\V(\vs_1)-\V(\vs_2)$ is called the cone of \emph{allowable translations}, denoted  $\A(\vs_1, \vs_2)$.
\end{defn}
We show that two (open) crooked halfspaces with disjoint linearizations are disjoint
if and only if the vector between their vertices lies in the closure of the cone of allowable translations.

\begin{thm}\label{thm:disjointness}
Suppose that $\vs_i$ are consistently oriented unit-spacelike vectors and
$p_1,p_2\in\E$.
Then the closed crooked halfspaces 
$\cH(p_1, \vs_1)$ and $\cH(p_2, \vs_2)$ are disjoint if and only if: 
\begin{equation}\label{eq:disjointnesscondition}
p_1 - p_2 \in \A(\vs_1, \vs_2).
\end{equation}
Similarly $\H(p_1, \vs_1)\cap \H(p_2, \vs_2) = \emptyset$ if and only if 
$p_1 - p_2$ lies in the closure of $\A(\vs_1, \vs_2)$.
\end{thm}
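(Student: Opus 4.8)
The plan is to turn both disjointness statements into a single identity between convex cones. A crooked halfspace is translation-equivariant, $\cH(p,\vs)=p+\cH(0,\vs)$ and $\H(p,\vs)=p+\H(0,\vs)$, so $\cH(p_1,\vs_1)$ and $\cH(p_2,\vs_2)$ meet if and only if $p_1-p_2$ lies in the Minkowski difference $\cH(0,\vs_2)-\cH(0,\vs_1):=\{x-y\mid x\in\cH(0,\vs_2),\ y\in\cH(0,\vs_1)\}$, and likewise for the open halfspaces. Using Lemma~\ref{lem:transitivity} to put $\vs_2$ equal to the standard director $(1,0,0)$ in the null frame of \S\ref{sec:NullFrames}, and writing $C:=\V(\vs_1)-\V(\vs_2)$ — which by Corollary~\ref{cor:StemQuadrant} is the polyhedral cone spanned by $\xm{\vs_1},-\xp{\vs_1},-\xm{\vs_2},\xp{\vs_2}$ and has interior $\A(\vs_1,\vs_2)$ — the theorem reduces to the identity $\cH(0,\vs_2)-\cH(0,\vs_1)=\V\setminus\A(\vs_1,\vs_2)$; the open statement is the same identity with the closed solid quadrants below replaced by open ones, which produces $\V\setminus\overline{\A(\vs_1,\vs_2)}$.

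Next I would compute the Minkowski difference explicitly. Decompose each closed crooked halfspace into its two closed solid quadrants, $\cH(0,\vs_i)=A_i^+\cup A_i^-$ (the pieces $a\ge 0$ and $a\le 0$ in the null frame of $\vs_i$); then $\cH(0,\vs_2)-\cH(0,\vs_1)$ is the union of the four polyhedral cones $A_2^{\epsilon}-A_1^{\delta}$, $\epsilon,\delta\in\{+,-\}$, each spanned by an explicit collection of vectors drawn from $\pm\vs_1,\pm\xm{\vs_1},\pm\xp{\vs_1},\pm\vs_2,\pm\xm{\vs_2},\pm\xp{\vs_2}$. One inclusion is free: $\V(\vs_i)=\Quad(\H(0,\vs_i))\subset\cH(0,\vs_i)$ by Proposition~\ref{thm:StemQuadrant}, so $C\subset\cH(0,\vs_2)-\cH(0,\vs_1)$ and in particular $\partial C$ is covered. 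For the rest one must check that (i) none of the four cones meets the open cone $\A(\vs_1,\vs_2)=\mathrm{int}\,C$, and (ii) their union exhausts $\V\setminus C$. This is exactly where consistent orientation enters: by Lemma~\ref{lem:consistentorientation} it is equivalent to disjointness of the halfplanes $\h(\vs_1),\h(\vs_2)$, and the defining inequalities $\vs_1\cdot\vs_2<0$, $\vs_1\cdot\xpm{\vs_2}\le 0$, $\xpm{\vs_1}\cdot\vs_2\le 0$ are precisely what fixes the relative position of the two null frames, hence of the generators of the four cones (and in particular forces $C$ to be full-dimensional and pointed).

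This last verification — the bookkeeping of the four cones and the check that the consistent-orientation hypotheses are exactly what makes them avoid $\mathrm{int}\,C$ while covering everything else — is the main obstacle, and it is essentially the disjointness theorem of~\cite{DrummGoldman}. The cleanest route is therefore to quote that theorem and then observe that the cone of admissible translations appearing there coincides with $\A(\vs_1,\vs_2)$, using $\V(\vs_i)=\overline{\Quad(\H(\vs_i,p_i))}-p_i$ (Proposition~\ref{thm:StemQuadrant}) together with Corollary~\ref{cor:StemQuadrant}. The reductions of the first paragraph and the inclusion $C\subset\cH(0,\vs_2)-\cH(0,\vs_1)$ are routine by comparison.
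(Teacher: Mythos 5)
Your reduction to a Minkowski-difference identity is sound: $\cH(p_1,\vs_1)\cap\cH(p_2,\vs_2)\neq\emptyset$ exactly when $p_1-p_2\in\cH(0,\vs_2)-\cH(0,\vs_1)$, so the theorem amounts to $\cH(0,\vs_2)-\cH(0,\vs_1)=\V\setminus\A(\vs_1,\vs_2)$. But the step you call ``free'' is wrong, and wrong in a way that contradicts the very statement you are proving. From $\V(\vs_i)\subset\cH(0,\vs_i)$ (via $\Quad(\H(0,\vs_i))=0+\V(\vs_i)$) what actually follows is
$$
C=\V(\vs_1)-\V(\vs_2)\ \subset\ \cH(0,\vs_1)-\cH(0,\vs_2),
$$
which is the \emph{negative} of the set appearing in your reduction, not $\cH(0,\vs_2)-\cH(0,\vs_1)$. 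Worse, the inclusion you assert cannot hold: the theorem says precisely that $\A(\vs_1,\vs_2)=\mathrm{int}\,C$ is \emph{disjoint} from $\cH(0,\vs_2)-\cH(0,\vs_1)$, so $C\subset\cH(0,\vs_2)-\cH(0,\vs_1)$ is false whenever $C$ is full-dimensional (which it is here), and it is incompatible with your own item (i). The nontrivial fact lurking in this step is that $\partial C$ \emph{is} contained in $\cH(0,\vs_2)-\cH(0,\vs_1)$ --- i.e.\ the closed halfspaces still touch when the vertex difference lies on a face of the cone of allowable translations --- and that is not a formal consequence of Proposition~\ref{thm:StemQuadrant}; it requires the explicit computation with the generators $\xm{\vs_1},-\xp{\vs_1},-\xm{\vs_2},\xp{\vs_2}$, or equivalently the sharpness of the Drumm--Goldman inequalities. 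A smaller inaccuracy: the open crooked halfspace is not the union of its two open solid quadrants --- it also contains the open stem quadrant $(0,+,-)$ --- so the open case is not literally ``the same identity with open quadrants.''

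Once these are repaired, your fallback (quote the disjointness criterion of~\cite{DrummGoldman} and identify its pyramid of translations with $\A(\vs_1,\vs_2)$ by matching extreme rays) is exactly how the paper handles the ``only if'' direction, so your proposal ultimately collapses onto the paper's argument there. For the ``if'' direction the paper sidesteps the cone bookkeeping entirely with a softer argument you may prefer: write $p_1-p_2=\vv_1-\vv_2$ with $\vv_i\in\V(\vs_i)$ and set $p_0:=p_i-\vv_i$; then $\H(p_i,\vs_i)=\H(p_0,\vs_i)+\vv_i\subset\H(p_0,\vs_i)$ by the definition of the translational semigroup, and the two halfspaces with common vertex $p_0$ are already disjoint by consistent orientation.
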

\begin{proof}
We first show that \eqref{eq:disjointnesscondition} implies
that $\cH(p_1, \vs_1) \cap \cH(p_2, \vs_2) = \emptyset$.
Choose $\vv_i\in \V(\vs_i)$ for $i=1,2$ respectively.
Choose an arbitrary origin $p_0\in\E$ and let $p_i := p_0 + \vv_i$.

Lemma~\ref{lem:consistentorientation} implies that
the crooked halfspaces $\H(p_0, \vs_1 )$ and 
$\H(p_0, \vs_2)$ are disjoint.
By Theorem~\ref{thm:StemQuadrant}, 
$$
\H(p_i, \vs_i) := \H(p_0, \vs_i)  + \vv_i \subset \H_i
$$ 
Thus $\H(p_1, \vs_1)$ and $\H(p_2, \vs_2)$ 
are disjoint.

Conversely, suppose that $\H(p_1, \vs_1) \cap \H(p_2, \vs_2 ) = \emptyset$. 
We use the following results from \cite{DrummGoldman}, 
(Theorem~6.2.1 and Theorem~6.4.1),
which are proved using a case-by-case analysis of intersections of wings and stems:
\begin{prop}
Let $\vs_i\in\V$ be consistently oriented unit-spacelike vectors and 
$p_i\in\E$, 
for $i=1,2$. Then $\CP(p_1, \vs_1)\cap \CP(p_2, \vs_2) = \emptyset$
if and only if:
\begin{itemize}
\item
for ultraparallel $\vs_1$ and $\vs_2$, 
\begin{equation}\label{eq:drummgoldmanu}
\ldot{(p_2-p_1)}{(\lcross{\vs_1}{\vs_2})}  > 
\vert\ldot{(p_2-p_1)}{\vs_1}\vert +
\vert\ldot{(p_2-p_1)}{\vs_2}\vert , 
\end{equation}
\item
for asymptotic $\vs_1$ and $\vs_2$ (where $\xm{\vs_1} = \xp{\vs_2}$),
then: 
\begin{align}\label{eq:drummgoldmana}
\ldot{(p_2-p_1)}{\vs_1} & < 0, \notag\\ 
\ldot{(p_2-p_1)}{\vs_2} & < 0, \notag\\ 
\ldot{(p_2-p_1)}{
(\lcross{\xp{\vs_1}}{\xm{\vs_2}})
} & >  0 .
\end{align}
\end{itemize}
\end{prop}

First suppose that $\vs_1$ and $\vs_2$ are ultraparallel and consider \eqref{eq:drummgoldmanu}. The inequality 
defines an infinite pyramid whose sides are defined where the absolute values in  \eqref{eq:drummgoldmanu} arise from
multiplication of $\pm 1$.

Corollary~\ref{cor:StemQuadrant} implies that  $\A(\vs_1, \vs_2)$
consists of all positive linear combinations of:
$$
\xm{\vs_1},  
-\xp{\vs_1},  
-\xm{\vs_2},  
\xp{\vs_2}.  
$$
Each of these vectors defines one of the four corners of the infinite pyramid. We show this for two  vectors, while the other
two vectors follow similar reasoning.

Set  $p_2 - p_1 = \xm{\vs_1}$, and plug this value into both sides of \eqref{eq:drummgoldmanu}.
The left-hand side expression, using \eqref{eq:cross} and \eqref{eq:PlusMinus}, is:
$$
\ldot{\xm{\vs_1}}{(\lcross{\vs_1}{\vs_2})} = 
\Det (\xm{\vs_1}, \vs_1, \vs_2 ) = 
- \ldot{\vs_2}{(\lcross{\vs_1}{\xm{\vs_1}}) =
  \ldot{\vs_2}}{\xm{\vs_1}}.
$$
By the definition of consistent orientation, this term is positive.
The right-hand side expression is:
$$ 
\vert\ldot{\xm{\vs_1}}{\vs_1}\vert +
\vert\ldot{\xm{\vs_1}}{\vs_2}\vert  = \vert\ldot{\xm{\vs_1}}{\vs_2}\vert  .
$$
Thus, the vector  $p_2 - p_1 = \xm{\vs_1}$ defines the ray on the corner with the sides defined by 
$ \ldot{\vs_2}{\xm{\vs_1}}=\vert\ldot{\xm{\vs_1}}{\vs_2}\vert$.

Now, set  $p_2 - p_1 = -\xp{\vs_1}$, and plug this value into both sides of \eqref{eq:drummgoldmanu}.
The left-hand side expression, using \eqref{eq:cross} and \eqref{eq:PlusMinus}, is:
$$
\ldot{-\xp{\vs_1}}{(\lcross{\vs_1}{\vs_2})} =  
-\Det (\xp{\vs_1}, \vs_1, \vs_2 )  =
 \ldot{\vs_2}{(\lcross{\vs_1}{\xp{\vs_1}}) =   \ldot{\vs_2}}{\xp{\vs_1}}.
$$
By the definition of consistent orientation, this term is positive.
The right-hand side expression is:
$$ 
\vert\ldot{-\xp{\vs_1}}{\vs_1}\vert +
\vert\ldot{-\xpm{\vs_1}}{\vs_2}\vert  = \vert\ldot{\xm{\vs_2}}{\vs_2}\vert  .
$$
Thus, the vector  $p_2 - p_1 = -\xm{\vs_1}$ defines the ray on the corner with the sides defined by  
$  \ldot{\vs_2}{\xm{\vs_1}}=\vert\ldot{\xm{\vs_1}}{\vs_2}\vert$.

The asymptotic case \eqref{eq:drummgoldmana} is similar. 
The set of allowable translations, defined by \eqref{eq:drummgoldmana}, 
has three faces whose bounding rays  are parallel  to:
\[
\xm{\vs_2},\  -\xp{\vs_2}=-\xm{\vs_1},\  \xp{\vs_1}.
\]
The rest of the proof is analogous to the ultraparallel case
\eqref{eq:drummgoldmanu}.


\end{proof}

\section{Crooked foliations}
In this final section we apply the preceding theory to 
foliations of $\E$ by crooked planes.
These foliations linearize to foliations of $\Ht$ by geodesics.
Thus we regard crooked foliations as  affine deformations of geodesic
foliations of $\Ht$.  In this paper we consider affine deformations
of the foliation $\Fl$ of $\Ht$ by geodesics orthogonal to a fixed
geodesic $\ell\subset\Ht$.

\subsection{Foliations}

Let $M^m$ be an $m$-dimensional  topological manifold.
For $0 \le q \le m$, denote the coordinate projection by:
$$
\R^m \xrightarrow{\Pi} \R^q.
$$

\begin{defn}
A \emph{foliation} of codimension $q$  of 
 $M^m$ is a decomposition of $M$ into codimension $q$ submanifolds $F_x$, called \emph{leaves}, (indexed
by $x\in M$) 
together with an atlas of coordinate charts (homeomorphisms):
\[
U\xrightarrow{\psi_U} \R^m 
\]
such that the inverse images $(\psi_U)^{-1} (y)$, for $y\in\R^q$ are the intersections
$U\cap L_x$.
A \emph{crooked foliation}  of an open subset $\Omega\subset \E$ 
is a foliation of $\Omega$ by piecewise-linear leaves $F_x$ which are intersections of $\Omega$
with crooked planes.
More generally, if $\Omega$ is an open subset such that $\bar{\Omega}$ is a codimension-$0$ submanifold-with-boundary, we require that 
$\partial\Omega$
has a coordinate atlas with charts mapping to open subsets of crooked planes.
\end{defn}
The linearization $\L(F_x)$ of each leaf $F_x$ is a geodesic in $\Ht$,
and these geodesics foliate on open subset of $\Ht$. 
Thus the {\em linearization\/} of a crooked foliation is a geodesic foliation
of an open subset of $\Ht$.

\subsection{Affine deformations of orthogonal geodesic foliations } 
Given a geodesic $\ell\subset\Ht$,  the geodesics perpendicular to $\ell$ foliate $\Ht$. 
This geodesic foliation of $\Ht$, denoted $\Fl$, linearizes crooked foliations 
$\mathcal{F}$ as follows.

The leaves of $\Fl$, form a path of geodesics parametrized by a path of unit-spacelike vectors $\vs_t$.
The leaves of a crooked foliation $\mathcal{F}$ with linearization $\L(\mathcal{F}) = \Fl$ 
are crooked planes with directors $\vs_t$. Thus $\mathcal{F}$ will be specified by a path
$p_t$ in $\E$ such that the leaves of $\mathcal{F}$ are crooked planes $\CP(\vs_t,p_t)$.
We call $p_t$ the {\em vertex path\/} of $\mathcal{F}$.

\begin{prop}\label{prop:vertexpath}
Let $p_t$, $a\leq t\leq b$ be a 
regular path in $\E$ such that $p_t'$ belongs to the interior of the translational semigroup $\V(\H(\vs_t))$.  
Then for every $a\leq t_1,t_2\leq b$, the crooked planes $\CP(\vs_{t_1},p_{t_1})$ and $\CP(\vs_{t_2},p_{t_2})$ are disjoint.
\end{prop}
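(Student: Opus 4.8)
The plan is to reduce the disjointness of two leaves in the family to the disjointness criterion of Theorem~\ref{thm:disjointness}, applied to the pair of directors $\vs_{t_1}, \vs_{t_2}$ and the vertices $p_{t_1}, p_{t_2}$. Two things must be checked: first, that $\vs_{t_1}$ and $\vs_{t_2}$ are consistently oriented (so that the criterion even applies), and second, that the vector between the vertices, $p_{t_2} - p_{t_1}$, lies in the closure of the cone of allowable translations $\A(\vs_{t_1}, \vs_{t_2})$. Since $\Fl$ is the foliation of $\Ht$ by geodesics orthogonal to a fixed geodesic $\ell$, the halfplanes $\h(\vs_t)$ form a nested monotone family (their boundary geodesics sweep across $\Ht$ transversally to $\ell$), so for $t_1 \neq t_2$ the halfplanes $\h(\vs_{t_1})$ and $\h(\vs_{t_2})$ can be taken disjoint, and hence $\vs_{t_1}, \vs_{t_2}$ are consistently oriented by Lemma~\ref{lem:consistentorientation}. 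I would spell out the nesting of the $\h(\vs_t)$ explicitly using the perpendicularity to $\ell$.

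Next I would handle the vertex condition by an infinitesimal-to-global argument. The hypothesis is that $p_t'$ lies in the interior of $\V(\H(\vs_t))$ for all $t$; by Corollary~\ref{cor:StemQuadrant}, $\V(\H(\vs_t))$ is the cone of nonnegative combinations of $\xm{\vs_t}$ and $-\xp{\vs_t}$. The key step is to show that integrating such a vector field produces a displacement $p_{t_2} - p_{t_1} = \int_{t_1}^{t_2} p_t'\, dt$ lying in the closure of $\A(\vs_{t_1}, \vs_{t_2})$. This is a convexity statement: the cone of allowable translations between two fixed directors contains every infinitesimal translational vector $\V(\H(\vs_t))$ "along the way," suitably interpreted. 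Concretely, for $t$ between $t_1$ and $t_2$, one expects $\V(\H(\vs_t)) \subset \overline{\A(\vs_{t_1},\vs_{t_2})}$, because the null directions $\xm{\vs_t}, -\xp{\vs_t}$ interpolate between the extreme rays $\xm{\vs_{t_1}}, -\xp{\vs_{t_1}}, -\xm{\vs_{t_2}}, \xp{\vs_{t_2}}$ that span $\A(\vs_{t_1},\vs_{t_2})$. Granting that inclusion, the integral of a curve of vectors each lying in the convex cone $\overline{\A(\vs_{t_1},\vs_{t_2})}$ again lies in that cone, so $p_{t_2} - p_{t_1} \in \overline{\A(\vs_{t_1},\vs_{t_2})}$, and Theorem~\ref{thm:disjointness} gives disjointness of the closed crooked halfspaces $\cH(p_{t_1},\vs_{t_1})$ and $\cH(p_{t_2},\vs_{t_2})$, hence a fortiori of the bounding crooked planes $\CP(\vs_{t_1},p_{t_1})$ and $\CP(\vs_{t_2},p_{t_2})$.

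The main obstacle is exactly the inclusion $\V(\H(\vs_t)) \subset \overline{\A(\vs_{t_1},\vs_{t_2})}$ for $t$ strictly between $t_1$ and $t_2$: one has to parametrize the curve $\vs_t$ of unit-spacelike vectors orthogonal to $\ell$ (for instance by a boost parameter along $\ell^\perp$, so that $\vs_t = \xi_{u(t)} \vs_0$ in a suitable null frame adapted to $\ell$), track how $\xm{\vs_t}$ and $\xp{\vs_t}$ move under this one-parameter group, and verify that the rays $\R^+ \xm{\vs_t}$ and $\R^+(-\xp{\vs_t})$ genuinely lie in the $2$-dimensional (or $3$-dimensional, in the asymptotic degeneration) cone spanned by the four extreme rays of $\A(\vs_{t_1},\vs_{t_2})$. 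This is where the analysis bifurcates into the ultraparallel case (directors with disjoint ultraparallel perpendiculars) and the asymptotic limiting case, mirroring the case split already present in the proof of Theorem~\ref{thm:disjointness}; I expect each case to be a short direct computation once the null frame adapted to $\ell$ is fixed, but the bookkeeping of signs and the monotonicity of the interpolation is the delicate part. An alternative, possibly cleaner route would be to prove the stronger statement that the closed crooked halfspaces themselves are nested or disjoint in the obvious order along the path, and deduce disjointness of leaves from that; but the convexity argument above seems the most economical given the tools already established.
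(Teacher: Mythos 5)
Your overall strategy---write $p_{t_2}-p_{t_1}=\int_{t_1}^{t_2}p_t'\,dt$, show each velocity $p_t'$ lies in a single convex cone of allowable translations, and then invoke Theorem~\ref{thm:disjointness}---is exactly the paper's. But your orientation step contains a genuine error that propagates into the convexity argument. You assert that $\h(\vs_{t_1})$ and $\h(\vs_{t_2})$ ``can be taken disjoint,'' so that $\vs_{t_1},\vs_{t_2}$ are consistently oriented. For the coherent, continuously varying choice of directors that the foliation (and the hypothesis $p_t'\in\V(\H(\vs_t))$, which is not invariant under $\vs_t\mapsto-\vs_t$) forces upon you, this is false: as $t_2\to t_1$ the halfplane $\h(\vs_{t_2})$ converges to $\h(\vs_{t_1})$, so along the whole path the halfplanes $\h(\vs_t)$ are \emph{nested}, never disjoint. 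Consequently $\vs_{t_1}$ and $\vs_{t_2}$ are \emph{not} consistently oriented and $\A(\vs_{t_1},\vs_{t_2})$ is not even defined; the consistently oriented pair is $-\vs_{t_1},\vs_{t_2}$, and the correct cone is $\A(\vs_{t_2},-\vs_{t_1})$. The paper flags precisely this point in a parenthetical remark.

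The sign matters, it is not just bookkeeping. Since $\V\big(\H(p,-\vs)\big)=-\V\big(\H(p,\vs)\big)$ (Proposition~\ref{prop:complement} together with Proposition~\ref{thm:StemQuadrant}), the closure of the correct cone is
\begin{equation*}
\overline{\A(\vs_{t_2},-\vs_{t_1})}\;=\;\overline{\V(\vs_{t_2})+\V(\vs_{t_1})},
\end{equation*}
the cone generated by $\xm{\vs_{t_1}},-\xp{\vs_{t_1}},\xm{\vs_{t_2}},-\xp{\vs_{t_2}}$, and this does contain every intermediate $\V(\H(\vs_t))$---which is what makes the integral land where you need it. The cone you propose, $\overline{\V(\vs_{t_1})-\V(\vs_{t_2})}$, is generated by $\xm{\vs_{t_1}},-\xp{\vs_{t_1}},-\xm{\vs_{t_2}},\xp{\vs_{t_2}}$; it is a salient (pointed) cone containing $-\V(\vs_{t_2})$, hence it cannot contain $\V(\vs_{t_2})$, and your key inclusion $\V(\H(\vs_t))\subset\overline{\A(\vs_{t_1},\vs_{t_2})}$ already fails for $t$ near $t_2$. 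Once the sign is corrected, Theorem~\ref{thm:disjointness} applied to $\cH(p_{t_1},-\vs_{t_1})$ and $\cH(p_{t_2},\vs_{t_2})$ gives disjoint closed halfspaces, hence disjoint boundaries $\CP(p_{t_1},\vs_{t_1})=\CP(p_{t_1},-\vs_{t_1})$ and $\CP(p_{t_2},\vs_{t_2})$, as you intend. Two smaller remarks: the intermediate inclusion is asserted rather than proved both in your sketch and in the paper, so you are not missing a step the authors supply; and since all leaves of $\Fl$ are orthogonal to the common geodesic $\ell$, any two of them are ultraparallel, so the asymptotic case you worry about does not arise here.
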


\begin{proof}
We may assume the foliations gives rise to an ordering of the corresponding halfspaces:
 $$
\H(\vs_t)\subset\H(\vs_a),
$$ 
for all $a\leq t\leq b$. Let $t_1<t_2$, and set $\vv_t=p_t'$:
\begin{equation*}
p_{t_2}-p_{t_1}=\int_{t_1}^{t_2}~\vv_t dt
\end{equation*}
Since $\vs_t$ is a continuous path, $\h(\vs_{t_1})$ lies between $\h(\vs_{a})$  and $\h(\vs_{t_2})$, which 
lies between $\h(\vs_{t_1})$ and $\h(\vs_{b})$.  In particular, for every $t\in (t_1,t_2)$:
\begin{equation*}
\V(\H(\vs_t))\subset\A(\vs_{t_2},-\vs_{t_1})
\end{equation*}
(Note that $\vs_{t_1},~\vs_{t_2}$ are not consistently oriented but $-\vs_{t_1},~\vs_{t_2}$  are.)
Thus every $\vv_t$ belongs to $\A(\vs_{t_2},-\vs_{t_1})$.  Since $\A(\vs_{t_2},-\vs_{t_1})$ is a cone, it follows 
that $p_2-p_1$ belongs to it as well.
\end{proof}

We explicitly calculate a family of examples.
The spacelike vectors perpendicular to the geodesic $\ell\subset\Ht$
defined by the vector $\vs$ in \eqref{eq:100vector} form the path
\[
\vs_t := \bmatrix 0 \\ \cosh(t) \\ \sinh(t) \endbmatrix \in \V.
\]
\noindent
As in \S \ref{sec:NullFrames}, the corresponding null vectors are:
\[
\vs_t^\pm = \frac1{\sqrt{2}}
\bmatrix \mp 1 \\ \sinh(t) \\ \cosh(t) \endbmatrix \in \V
\]
which, for any $p\in\E$ define crooked halfspaces
$\H(p,\vs_t)$. 
The translational semigroups $\V\big(\H(p,\vs_t)\big)$ consist of all
\[
\vv(t) := a_t \vs_t^- - b_t \vs_t^+ =
\frac1{\sqrt{2}}
\bmatrix -\big(a_t  + b_t\big) \\  \big(a_t  - b_t\big) \sinh(t)\\  \big(a_t  - b_t\big) \cosh(t)
\endbmatrix
\]
where $a_t,b_t>0$. 
By Lemma~\ref{prop:vertexpath}, the vertex path $p_t$ is obtained by integrating $\vv(t)$.

The linearization $\Fl$ of the crooked foliations with leaves $\CP(p_t,\vs_t)$ is invariant under the one-parameter
group of transvections $\xi_t$ defined in \eqref{eq:Transvections} in \S\ref{sec:Tbhr}.
When the coefficients $a_t, b_t$ are positive constants, then the leaves $L_t$ are orbits $\gamma_t(L_0)$ of a fixed
leaf $L_0$ by an affine deformation $\gamma_t$ of $\xi_t$.
For example, let   $a_t = \sqrt{2} a$, $b_t = \sqrt{2} b$, where $a,b>0$. 
Then the vertex path is:
\[
p_t = \bmatrix 
-(a+b) t \\ (a-b) \cosh(t) \\ (a-b) \sinh(t) \endbmatrix.
\]
When $a=b$,  the vertex path is a spacelike geodesic.
Figure~\ref{fig:foliation} depicts such a foliation.

\begin{figure}[htb]
\centerline{\includegraphics[scale=.56]{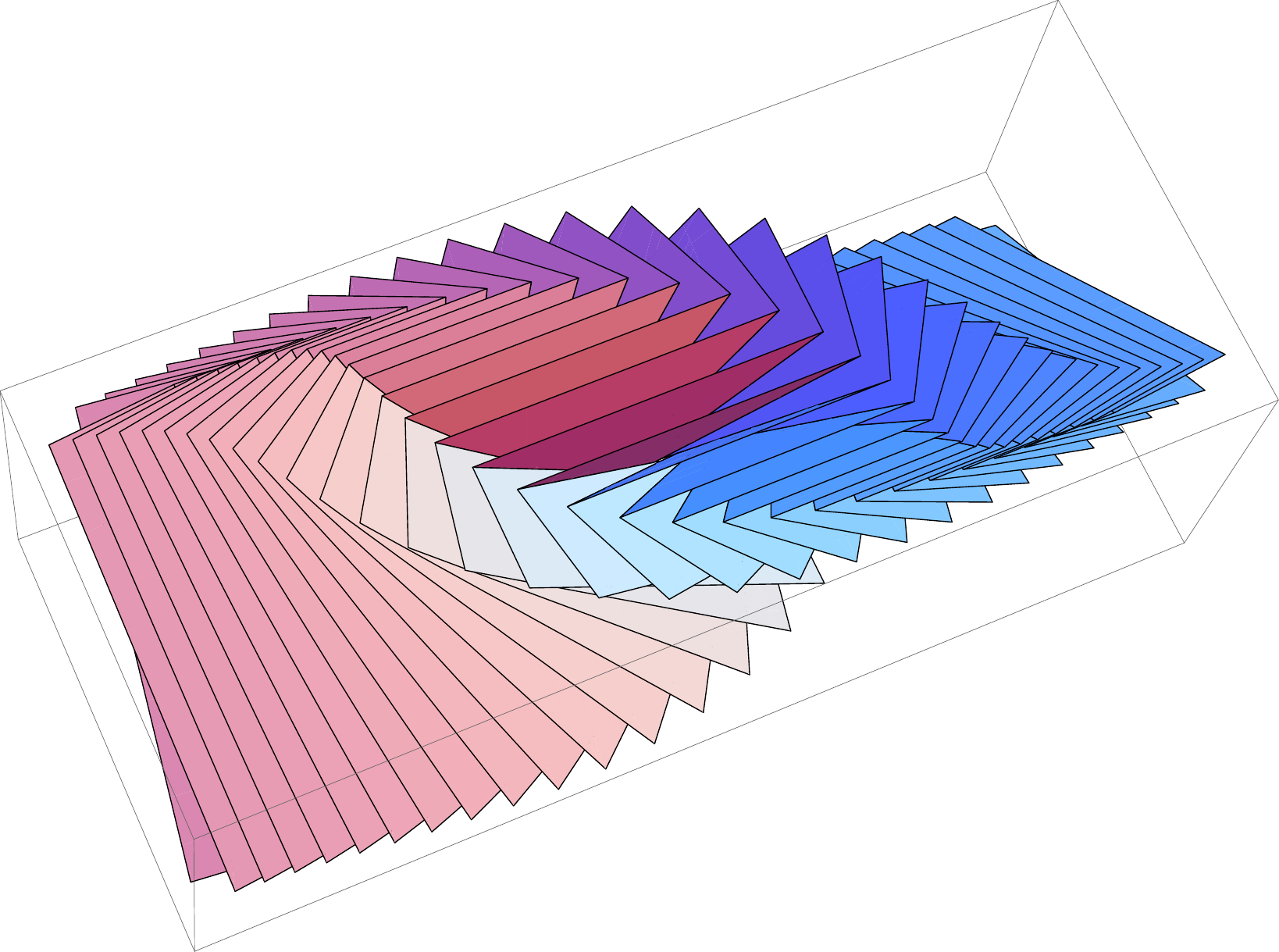}}
\caption{A crooked foliation}
\label{fig:foliation}
\end{figure}

\begin{figure}\label{fig:ZZ}
\begin{subfigure}[b]{0.6\textwidth}
\includegraphics[scale=.8]{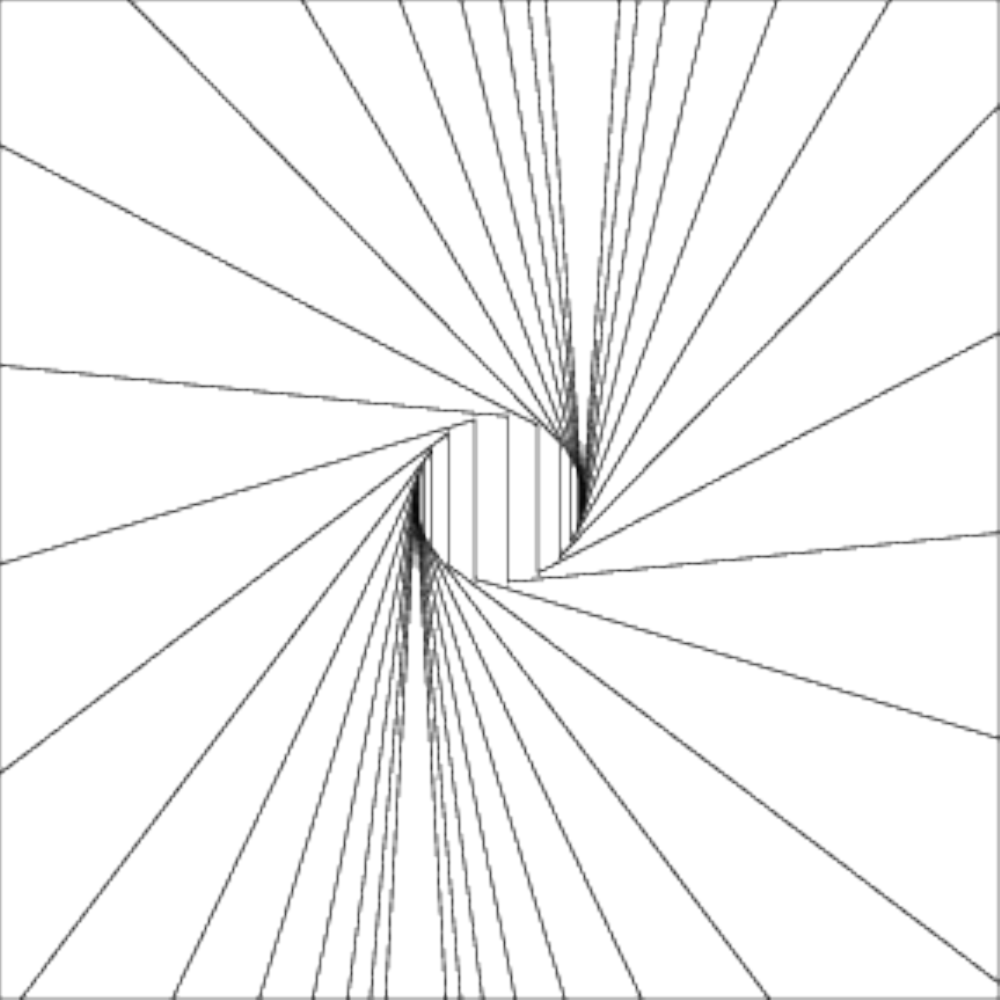}
\end{subfigure}
\begin{subfigure}[b]{0.6\textwidth}
\includegraphics[scale=0.8]{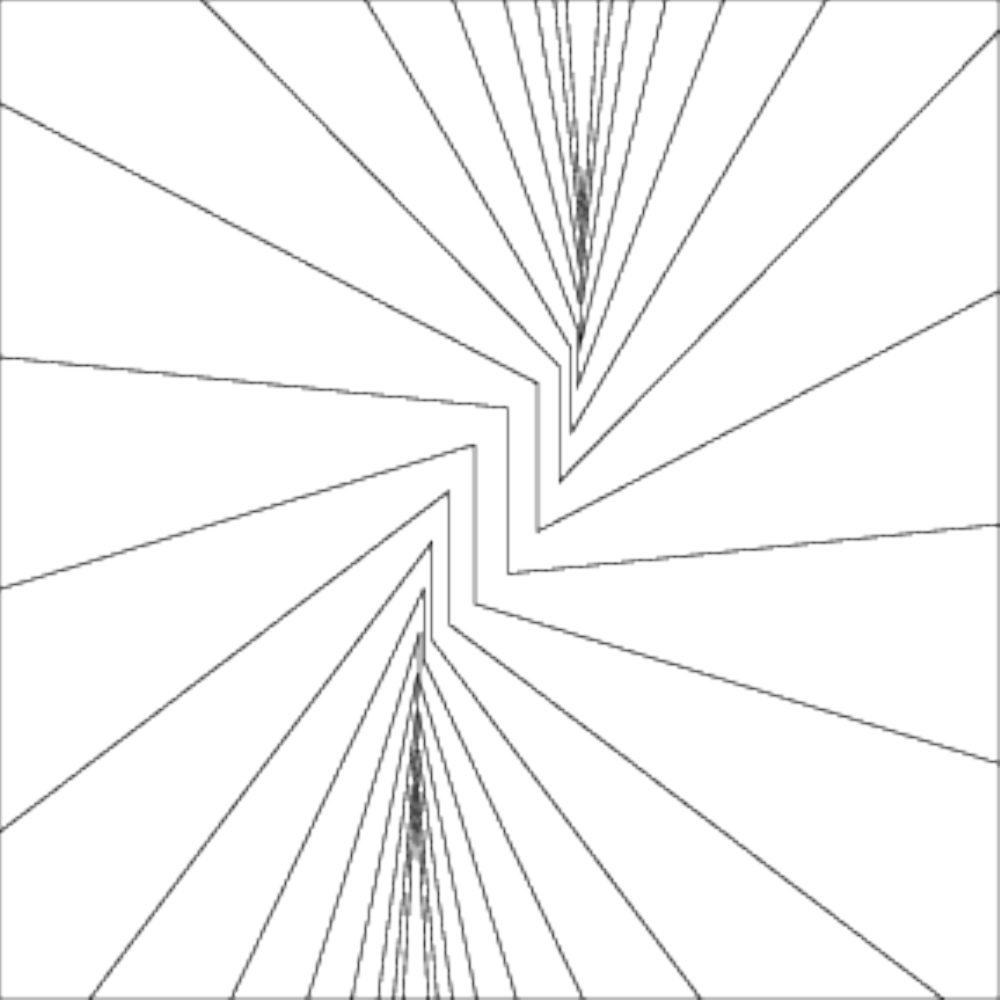}
\end{subfigure}
\caption{
One can visualize crooked planes by their intersections with a fixed definite plane,
called {\em zigzags\/} in \cite{CG}, \S 3.3.
The first picture illustrates the family of zigzags arising
as intersections of the crooked planes $\CP(\vs_t,0)$
vertexed at the origin $0$.
The second picture illustrates the family of zigzags 
arising from a crooked foliation where $p_t$ is a spacelike geodesic.
}
\end{figure}

\begin{figure}
  \centering
  \begin{subfigure}[b]{0.6\textwidth}
     \includegraphics[scale=1]{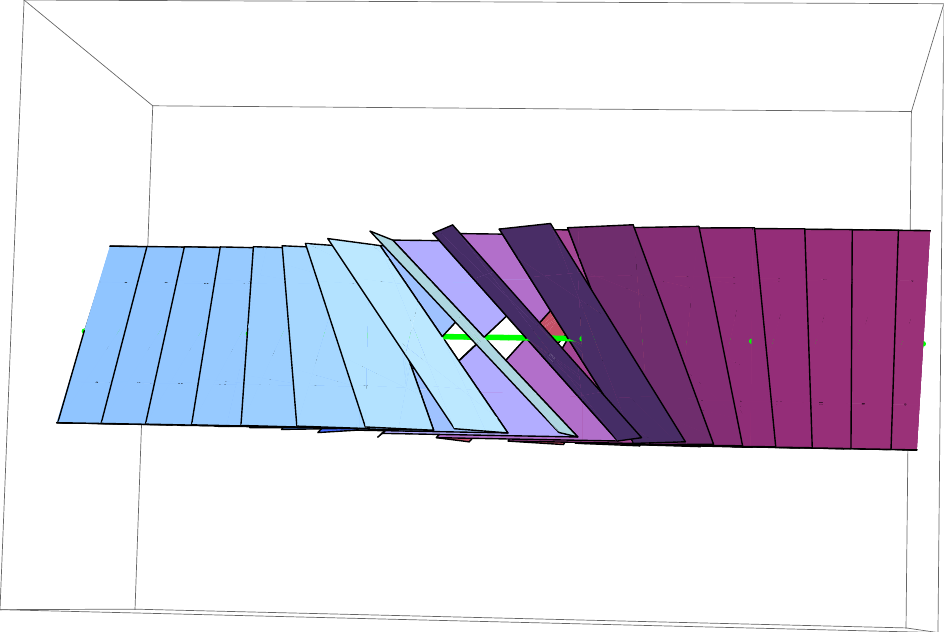}
  \end{subfigure}
  \begin{subfigure}[b]{0.6\textwidth}
      \includegraphics[scale=1]{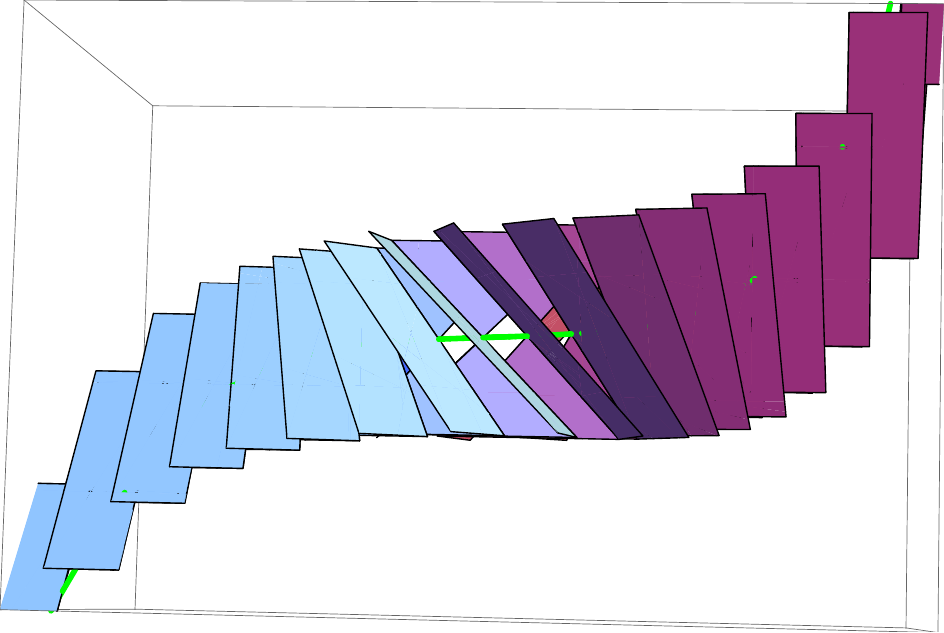}
  \end{subfigure}
    \caption{The first picture illustrates another view of
    Figure~\ref{fig:foliation}, where the vertex path
    is the invariant axis of an affine deformation $\gamma_t$
    of $\xi_t$. The second picture illustrates a crooked folation
    where the vertex path $p_t$ is a generic orbit of
    $\gamma_t$.
        }
\label{fig:vertexpaths}
\end{figure}
\clearpage

\end{document}